\definecolor{annotateColor}{RGB}{0,0,255}
\definecolor{annotateColor}{RGB}{0,0,0} % FINAL VERSION
\newcommand{\added}[2][]{%
    \ifthenelse{\boolean{AnnotateChanges}}{%
        \textcolor{annotateColor}{#2}\textbf{\textcolor{red}{#1}}}{#2}}
\newcommand{\deleted}[2][]{%
    \ifthenelse{\boolean{AnnotateChanges}}{%
        \textcolor{annotateColor}{\sout{#2}}\textbf{\textcolor{red}{#1}}}{}}
\newcommand{\replaced}[3][]{%
    \ifthenelse{\boolean{AnnotateChanges}}{%
        \textcolor{annotateColor}{#2\sout{#3}}\textbf{\textcolor{red}{#1}}}{#2}}
\newtheorem{theorem}{Theorem}
\newtheorem{assumption}{Assumption}
\newtheorem{remark}{Remark}
\newtheorem{lemma}{Lemma}
\newtheorem{weak}{Weak form}
\journal{}
\newcommand{\bTheta}{\boldsymbol{\Theta}}
\newcommand{\bnabla}{\boldsymbol{\nabla}}
\newcommand{\bn}{\boldsymbol{n}}
\newcommand{\bt}{\boldsymbol{t}}
\newcommand{\bm}{\boldsymbol{m}}
\algnewcommand\algorithmicinput{\textbf{Input:}}
\algnewcommand\Input{\item[\algorithmicinput]}
\algnewcommand\algorithmicoutput{\textbf{Output:}}
\algnewcommand\Output{\item[\algorithmicoutput]}
\algnewcommand\algorithmicassert{\textbf{Assert:}}
\algnewcommand\Assert{\item[\algorithmicassert]}
\begin{document}

\begin{frontmatter}

%% Title, authors and addresses
\title{Level-set topology optimisation with unfitted finite elements and automatic shape differentiation}

\author[qut]{Zachary J. Wegert\corref{cor}}
\ead{zach.wegert@hdr.qut.edu.au}
\author[monash]{Jordi Manyer}
\ead{jordi.manyer@monash.edu}
\author[qut]{Connor Mallon}
\ead{connor.mallon@monash.edu}
\author[monash]{Santiago Badia}
\ead{santiago.badia@monash.edu}
\author[qut]{Vivien J. Challis\corref{cor}}
\ead{vivien.challis@qut.edu.au}

\cortext[cor]{Corresponding author}

\affiliation[qut]{organization={School of Mathematical Sciences, Queensland University of Technology},
            addressline={2 George St}, 
            city={Brisbane},
            postcode={4000}, 
            state={Queensland},
            country={Australia}}

\affiliation[monash]{organization={School of Mathematics, Monash University},
            addressline={Wellington Rd}, 
            city={Clayton},
            postcode={3800}, 
            state={Victoria},
            country={Australia}}

\begin{abstract}
In this paper we develop automatic shape differentiation techniques for unfitted discretisations and link these to recent advances in shape calculus for unfitted methods. 
We extend existing analytic shape calculus results to the case where the domain boundary intersects with the boundary of the background domain. We further show that we can recover these analytic derivatives to machine precision regardless of the mesh size using the developed automatic shape differentiation techniques, drastically reducing the burden associated with the analytic derivation of these quantities. In addition, we show that we can also recover the symmetric shape Hessian. We implement these techniques for both serial and distributed computing frameworks in the Julia package GridapTopOpt and the wider Gridap ecosystem. As part of this implementation we propose a novel graph-based approach for isolated volume detection. We demonstrate the applicability of the unfitted automatic shape differentiation framework and our implementation by considering the three-dimensional minimum compliance topology optimisation of a linear elastic wheel and of a linear elastic structure in a fluid-structure interaction problem with Stokes flow. The implementation is general and allows GridapTopOpt to solve a wider range of problems on unstructured meshes without analytic calculation of shape derivatives and avoiding issues that arise when material properties are smoothed at the domain boundary. The software is open source and available at \url{https://github.com/zjwegert/GridapTopOpt.jl}.
\end{abstract}

%Research highlights
%\begin{highlights}
%\item We present a general serial/memory-distributed implementation of level-set topology optimisation using unfitted finite elements and automatic shape differentiation.
% \item We present a memory-distributed implementation  utilising the Gridap package ecosystem for arbitrary optimisation problems.
%\item We extend existing shape calculus results to include intersections of the interface and background domain boundary.
%\item We recover analytic directional shape derivatives to machine precision using automatic shape differentiation.
%\item We propose a novel graph-based approach for isolated volume detection.
%\item We present CutFEM-based topology optimisation results for fluid-structure interaction using full automatic shape differentiation.
% \item 
%\end{highlights}

\begin{keyword}
Shape calculus \sep Automatic differentiation \sep unfitted finite element methods \sep CutFEM \sep Memory-distributed computing
\end{keyword}

\end{frontmatter}

%\linenumbers 

\section{Introduction}\label{sec: Introduction}
%% Introduce topology optimisation using classical level set approach
Shape and topology optimisation are important computational techniques with a wide range of industrial applications and a substantial theoretical foundation \cite{TopOptMonograph, DeatonGrandhi2013,TopOptReviewSigmund,10.1016/bs.hna.2020.10.004_978-0-444-64305-6_2021}. Generally speaking, there are two main branches of techniques in topology optimisation: \textit{density} methods \cite{Bendsoe89, Rozvanyetal1992} and \textit{level-set} methods \cite{10.1016/S0045-7825(02)00559-5_2003,10.1016/j.jcp.2003.09.032_2004}. In the former, design variables are typically material densities of elements or nodes in a mesh, while, in the latter, the boundary of the domain is implicitly tracked via a level-set function and updated using an evolution equation.

%% Discuss unfitted methods
In conventional level-set methods, the underlying partial differential equations are solved over a domain that is immersed in a static background domain by `smudging' the boundary over the interface with a smooth Heaviside function \cite{10.1016/S0045-7825(02)00559-5_2003,10.1016/j.jcp.2003.09.032_2004,10.1016/bs.hna.2020.10.004_978-0-444-64305-6_2021}. This allows integration to be relaxed over the whole computational domain. While this is computationally efficient and suitable for many topology optimisation problems, it can lead to undesirable computational artefacts particularly in the case of nonlinear behaviour or solid-fluid interaction. In addition, differentiation with respect to the level-set function in a smoothed boundary regime cannot fully capture the shape derivative of a functional under a boundary perturbation \cite{GridapTopOpt}. Unfitted finite element methods are an alternative approach that embed a complex computational domain into a simple bounding domain without introducing additional degrees of freedom. This works by defining an \textit{in} and \textit{out} region based on the sign of the level-set function. The boundary separating the regions is the intersection of the zero isosurface of the level-set function and the background mesh, determined using an appropriate method such as marching tetrahedra \citep{10.1002/nme.4823_2015,Badia_Verdugo_Martin_2018}. The boundary mesh can be used to integrate transmission conditions across the interface; for example, in the case of fluid-structure interaction. Unfitted methodologies are a promising way to address the above issues with conventional level-set methods because they provide a precise description of the boundary as a $D-1$ dimensional manifold. However, the small cut-cell problem affects unfitted finite element methods; the resulting discrete system is ill-posed when the portion of a cut cell in the domain interior is small \cite{dePrenter2023}. This issue has been addressed by using ghost penalty stabilisation (CutFEM) \cite{10.1002/nme.4823_2015} and discrete extension operators (AgFEM) \cite{Badia_Verdugo_Martin_2018}.  CutFEM has been successfully applied to several topology optimisation problems, including those involving linear elasticity, fluid flow, and acoustics \cite{10.1016/j.cma.2017.09.005_2018,10.1016/j.cma.2017.03.007_2017,10.1002/nme.5621_2018}. 

%% Discuss shape calculus for unfitted methods
In recent work by \citet{Berggren_2023}, a shape calculus for unfitted discretisations was proposed based on the concept of boundary-face dilation. This can be regarded as a discretise-then-differentiate approach because it directly relies on parameterising the dilated region under a perturbation of a continuous and piecewise linear level-set function. Using this technique, \citet{Berggren_2023} rigorously derived the directional shape derivatives for general volume and surface integrals in two and three dimensions, and provided the minimum regularity requirements for these results. One of the underlying assumptions of Berggren's work is that the fictitious domain is entirely enclosed in the background domain. Although this situation is common in unfitted finite element methods \citep[e.g.,][]{10.1002/nme.4823_2015}, it is less standard in general topology optimisation problems where boundary conditions are typically applied on subsets of the boundary of the background domain. In this work, we therefore consider the extension of the results given by \citet{Berggren_2023} to the case where the domain boundary intersects the background domain boundary. In addition, we derive the directional shape derivative for general flux integrals.

%% Automatic shape differentiation
The derivation of first- and second-order shape derivatives is difficult and error-prone. To this end, automatic shape differentiation, which is the application of automatic differentiation techniques to shape derivatives, has been proposed in the literature to help reduce this burden \cite{Schmidt_2018,Schmidt_Schutte_Walther_2018,Ham_Mitchell_Paganini_Wechsung_2019,Neofytou_Rios_Bujny_Menzel_Kim_2024}. These techniques have now been implemented in several software packages \cite[e.g.,][]{Dokken_Mitusch_Funke_2020,Gangl_Sturm_Neunteufel_Schöberl_2021}. The conventional methodology relies on deformations of a mesh to propagate dual numbers through the desired integrals. Although the conventional automatic shape differentiation discussed above has been successfully applied to several topology optimisation problems in the aforementioned work, it relies on having a body-fitted mesh throughout the optimisation procedure. Naturally, this poses the question of whether shape derivatives can be recovered in the context of unfitted discretisations and to what level of precision.

The automatic shape differentiation for unfitted discretisations was first proposed by \citet{mallon2024neurallevelsettopology}. They proposed the use of forward-mode differentiation to automatically compute shape derivatives as perturbations of a level-set function. However, it is not clear how the derivatives computed in their work relate to the analytic shape derivatives \cite[e.g.,][]{10.1016/j.jcp.2003.09.032_2004} or the directional shape derivatives in \cite{Berggren_2023}. In this work, we further develop the mathematics behind automatic differentiation for unfitted discretisations and link it to the directional derivatives in \cite{Berggren_2023}. We will show that the results given by \citet{Berggren_2023} can be recovered to machine precision regardless of mesh size. The developed unfitted automatic shape differentiation techniques can be used to avoid computing complicated mesh-related quantities that appear in directional derivatives of surface integrals. In addition, the technique can also compute derivatives of expressions for which there is not yet a rigorous mathematical counterpart in the framework of \citet{Berggren_2023} (e.g., shape Hessians).

%% Discuss implementation and examples 
We implement the analytic directional shape derivatives in \cite{Berggren_2023} and automatic shape differentiation for unfitted discretisations in the Julia package GridapTopOpt \cite{GridapTopOpt} and the wider Gridap-package ecosystem \cite{Badia2020,Verdugo2022}. GridapTopOpt provides efficient implementations of C\'ea's method \cite{10.1051/m2an/1986200303711_1986} (also known as the adjoint method) for differentiating functionals that depend on the solutions to PDEs. As a result, our implementation can be used to solve general PDE-constrained optimisation problems with a syntax that is near one-to-one with the mathematical notation. In addition, we leverage GridapDistributed \cite{Badia2022} to implement analytic derivatives, automatic shape differentiation, and unfitted evolution and reinitialisation in both serial and memory-distributed computing frameworks. These extensions of GridapTopOpt \cite{GridapTopOpt} allow us to solve an even wider range of problems on unstructured background triangulations. In addition, compatibility of our framework within the Gridap package ecosystem makes it possible to solve a significant number of cutting-edge topology optimisation problems involving complicated physical phenomena such as those posed on non-standard function spaces such as $H(\operatorname{div})$ and $H(\operatorname{curl})$.

To demonstrate the applicability of the unfitted automatic shape differentiation framework, we consider two example optimisation problems. In the first, we consider the minimum compliance optimisation of a three-dimensional linear elastic wheel. In the second problem, we consider topology optimisation of a three-dimensional linear elastic structure in a fluid-structure interaction problem with Stokes flow. We solve both computational examples in a memory-distributed manner using a ghost penalty stabilisation \cite{Burman2010}. Note that the computational framework and theoretical results given in Section \ref{sec: Shape derivatives for unfitted methods} and \ref{sec: Automatic differentiation} can be readily applied to other unfitted finite element methods.

%% Paper outline
The remainder of the paper is as follows. In Section \ref{sec: Shape derivatives for unfitted methods}, we recall the main results given in \cite{Berggren_2023} and extend them to the cases mentioned above. In Section \ref{sec: Automatic differentiation}, we discuss automatic differentiation for unfitted discretisations and compare the results to those in Section \ref{sec: Shape derivatives for unfitted methods}. In Section \ref{sec: Level-set topology optimisation}, we discuss the implementation of unfitted level-set topology optimisation. In Section \ref{sec: Examples}, we give two example results computed using the unfitted framework. Finally, in Section \ref{sec: Conclusions} we present our concluding remarks.

\section{Directional shape derivatives for unfitted methods}\label{sec: Shape derivatives for unfitted methods}

\subsection{Notation and previous results}

In the following, we recall the notation and main results of \citet{Berggren_2023}. We refer to this work and references therein for further discussion of shape calculus in the context of unfitted discretisations. For the purpose of restating these results, we assume that the unfitted domain does not intersect the boundary of the background domain. %We also extend this work to the case of intersections with the boundary of the background triangulation, as well as the case of surface integrals involving the normal.

Suppose that we consider a background domain $D\subset\mathbb{R}^d$ with a symplectic triangulation $\mathscr{T}_h$. We use $\mathscr{S}_h$ to denote the faces of the triangulation and $\mathscr{E}_h$ to denote the subfaces. We define a level-set function $\phi\in W_h$, where $W^h=\{v\in C^0(\bar\Omega):v|_K\in P^1(K),\forall K\in\mathscr{S}_h\}$, with the property that $\phi$ partitions $D$ in the standard manner:
\begin{equation}\label{eqn: lsf def}
\begin{cases}\phi(\boldsymbol{x})<0&\text{if } \boldsymbol{x} \in \Omega, \\ \phi(\boldsymbol{x})=0 &\text{if } \boldsymbol{x} \in \partial \Omega, \\\phi(\boldsymbol{x})>0 &\text{if } \boldsymbol{x} \in D \backslash \bar{\Omega}.\end{cases}
\end{equation}
We now consider perturbations of $\Omega$ under variations of $\phi$. Namely, we consider domains
\begin{equation}
    \Omega_t=\{\boldsymbol{x}\in D:\phi_t(\boldsymbol{x})<0\}{}
\end{equation}
under perturbations
\begin{equation}\label{eqn: perturb}
    \phi_t(\boldsymbol{x})=\phi(\boldsymbol{x})+tw(\boldsymbol{x})
\end{equation}
where $t\geq0$ and $w$ is a Lagrangian basis function for $W_h$. Finally, we define the perturbed region $E_t=\Omega\setminus\bar\Omega_t$. Parameterising this domain is a fundamental ingredient in formulating the directional semiderivatives in \cite{Berggren_2023}. Note that the semiderivatives are one-sided derivatives that coincide under certain assumptions discussed below.  

\begin{figure}[!t]
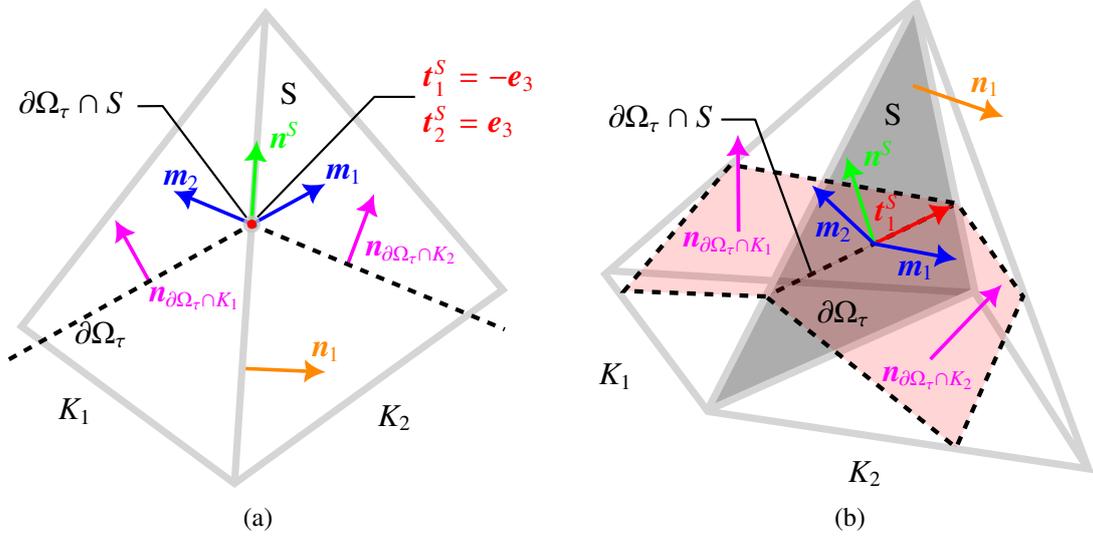

    % \large
    \centering
    \begin{subfigure}{0.4\textwidth}
        \centering
        \def\svgwidth{\textwidth}
        \input{Figure1a}
        \caption{}        
    \end{subfigure}
    \hspace{1cm}
    \begin{subfigure}{0.4\textwidth}
        \centering
        \def\svgwidth{\textwidth}
        \input{Figure1b}
        \caption{}
    \end{subfigure}
    \caption{An illustration of geometric quantities required for the resolution of Equation \eqref{eqn: 6.36 og} in two dimensions (Fig. a) and three dimensions (Fig. b) where $K_1$ and $K_2$ are two neighbouring elements of $\mathscr{T}_h$ that are cut by the interface, $S=\bar{K}_1\cap \bar{K}_2$ is the facet shared by $K_1$ and $K_2$, $\bn_1$ is the normal to $S$ in the direction of $K_2$ and similarly for $\bn_2$, $\bn_{\partial\Omega_\tau\cap K_k}$ is the outward normal restricted to $\partial\Omega_\tau\cap K_k$, $\bt^S_k$ is the tangent to $\partial\Omega_\tau\cap S$, and $\bm_k$ is the co-normal defined by $\bm_k=\bt^S_k\times\bn_{\partial\Omega_\tau\cap K_k}$. In Figure (a) we use the convention that $\bt_1^S=-\boldsymbol{e}_3$ and $\bt_2^S=\boldsymbol{e}_3$. (Adapted from \cite{Berggren_2023})}
    \label{fig:geometric-setup}
\end{figure}

Before we state the main theorems in \cite{Berggren_2023}, we recall the following assumptions:
\begin{assumption}\label{assemption 6.3} There is a $t_\mathrm{max}>0$ such that for each $K\in\mathscr{T}_h$, if $\partial\Omega_t\cap K$ is either empty or non-empty for some $t\in(0,t_\mathrm{max}]$, it has the same property for each $t\in(0,t_\mathrm{max}]$.
\end{assumption}
\begin{assumption}\label{assemption 6.4} The boundary $\partial\Omega$ does not intersect any mesh points of the triangulation $\mathscr{T}_h$.
\end{assumption}
\noindent The first assumption ensures that $E_t$ has a unique parameterisation while the second ensures agreement of semiderivatives in the limits $t\rightarrow0^+$ and $t\rightarrow0^-$.

\begin{theorem}[\citet{Berggren_2023}]\label{Berggren theorem 6.7}
Under the perturbation in Equation \eqref{eqn: perturb}, the directional semiderivative of volume integral
\begin{equation}
    J_1(\phi)=\int_{\Omega(\phi)} f~\mathrm{d}\boldsymbol{x},
\end{equation}
for $f \in C^0(\bar{\mathscr{T}}_h)$, satisfies

\begin{equation}\label{vol result}
\mathrm{d}J_1(\phi ; w)=\lim _{t \rightarrow 0^{+}} \frac{1}{t}\left(J\left(\phi_t\right)-J(\phi)\right)=-\int_{\partial \Omega} f \frac{w}{\left|\partial_{\bn} \phi\right|}~\mathrm{d}s
\end{equation}

\begin{enumerate}[label=(\roman*)]
    \item If Assumption \ref{assemption 6.3} is violated, then $f$ and $\partial_{\bn} \phi$ are the limits of these functions from the interior of $\Omega$ whenever these quantities possess jump discontinuities on $\partial \Omega$.
    \item If Assumption \ref{assemption 6.3} is satisfied, the semiderivatives $t \rightarrow 0^{-}$and $t \rightarrow 0^{+}$agree.
\end{enumerate}
\end{theorem}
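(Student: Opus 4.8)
The plan is to reduce the computation of the semiderivative to a \emph{thin sliver} volume estimate. First I would note that, since $\phi_t = \phi + tw$ with $w$ a (nonnegative) Lagrangian hat function, the perturbed subdomain satisfies $\bar\Omega_t = \{\phi_t \le 0\} = \{\phi \le -tw\} \subseteq \bar\Omega$ for $t \ge 0$, so that the removed region is exactly
\begin{equation*}
E_t = \Omega\setminus\bar\Omega_t = \{\boldsymbol{x}\in D : -tw(\boldsymbol{x}) < \phi(\boldsymbol{x}) < 0\},
\end{equation*}
which is supported in the patch $\operatorname{supp} w$. Consequently $J(\phi_t) - J(\phi) = \int_{\Omega_t} f - \int_\Omega f = -\int_{E_t} f\,\mathrm{d}\boldsymbol{x}$, and the whole problem becomes the evaluation of $\lim_{t\to 0^+} t^{-1}\int_{E_t} f\,\mathrm{d}\boldsymbol{x}$.

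The core step is a local change of variables on each element $K\in\mathscr{T}_h$ cut by $\partial\Omega$. Because $\phi\in W_h$ is affine on $K$, the gradient $\nabla\phi|_K$ is constant, the facet $\partial\Omega\cap K$ is flat, and the outward unit normal is the constant $\bn = \nabla\phi/|\nabla\phi|$ with $\partial_{\bn}\phi = |\nabla\phi|$. Introducing coordinates $(\boldsymbol{\xi},\eta)$ with $\boldsymbol{\xi}$ ranging over $\partial\Omega\cap K$ and $\eta$ measuring signed distance along $\bn$, one has $\phi = |\nabla\phi|\,\eta$, so the sliver is $E_t\cap K = \{(\boldsymbol{\xi},\eta) : -tw/|\nabla\phi| < \eta < 0\}$ and its thickness at $\boldsymbol{\xi}$ is $tw(\boldsymbol{\xi},0)/|\partial_{\bn}\phi| + O(t^2)$, the correction coming from the affine variation of $w$ across the $O(t)$-thin layer. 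Since the Jacobian of this orthonormal change of variables is one, a first-order Taylor expansion of the inner integral gives
\begin{equation*}
\int_{E_t\cap K} f\,\mathrm{d}\boldsymbol{x} = t\int_{\partial\Omega\cap K} f\,\frac{w}{|\partial_{\bn}\phi|}\,\mathrm{d}s + O(t^2).
\end{equation*}
Summing over the finitely many cut elements, dividing by $t$, and letting $t\to 0^+$ yields $\lim_{t\to0^+} t^{-1}\int_{E_t} f = \int_{\partial\Omega} f\,w/|\partial_{\bn}\phi|\,\mathrm{d}s$, and hence the stated formula after the overall minus sign.

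It remains to address the two clauses. For clause (ii) I would repeat the argument for $t\to 0^-$: now $\Omega_t\supseteq\Omega$, the sliver $\tilde E_t = \Omega_t\setminus\bar\Omega$ is added rather than removed, and the identical leading-order computation (with thickness $|t|w/|\partial_{\bn}\phi|$) produces the same expression. Under Assumption \ref{assemption 6.3} the same collection of elements is cut for $t$ of either sign, and Assumption \ref{assemption 6.4} guarantees that $\partial\Omega$ avoids mesh vertices, so the elementwise-constant $\nabla\phi$ and the piecewise-continuous $f \in C^0(\bar{\mathscr{T}}_h)$ take the same boundary values from both sides of the limit; the one-sided derivatives therefore coincide. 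For clause (i), the key observation is that $E_t$ lies in the interior of $\Omega$, so only the traces of $f$ and $\partial_{\bn}\phi$ taken from within $\Omega$ enter the sliver integral; when Assumption \ref{assemption 6.3} fails these traces may jump across $\partial\Omega$ (since $f$ is only continuous per element and $\nabla\phi$ jumps between elements), and the formula holds with the interior limits.

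The main obstacle is making the sliver estimate rigorous with uniform control of the remainder: one must show that the $O(t^2)$ errors --- arising from the variation of $w$ (and of $f$) across the thin layer and from the mismatch between the fixed normal $\bn$ and the slightly rotated perturbed interface $\partial\Omega_t\cap K$ --- are bounded uniformly over the cut elements, so that they survive division by $t$ and vanish as $t\to 0^+$. This relies on the affine structure of $\phi$ and $w$ on each element together with the uniform continuity of $f$ on each $\bar K$, and on Assumption \ref{assemption 6.3} to ensure $E_t$ admits a single consistent parameterisation (cf.\ Figure \ref{fig:geometric-setup}) rather than changing combinatorial type as $t\to 0^+$.
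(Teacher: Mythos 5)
Your sliver argument is sound in its core mechanism, but note first that this paper does not actually prove Theorem \ref{Berggren theorem 6.7}: it is recalled verbatim from \citet{Berggren_2023}, so the only meaningful comparison is with Berggren's boundary-face-dilation proof, whose machinery the present paper uses and extends in Lemma \ref{lemma: ext lemma 6.10}. Your approach is the same underlying idea (the first-order volume of the dilated region $E_t$), but executed as a thickness-plus-remainder estimate, whereas Berggren parameterises $E_t$ \emph{exactly}: integrals over the dilated region are rewritten as iterated integrals of the form $\int_0^t\int_{\partial\Omega_\tau}(\cdot)\,\frac{w}{\lvert\partial_{\bn}\phi_\tau\rvert}\,\mathrm{d}s\,\mathrm{d}\tau$ (this is the coarea-type identity whose facet version, Berggren's Lemma 6.9, is invoked in the proof of Lemma \ref{lemma: ext lemma 6.10}), after which the semiderivative follows from one-sided differentiation in $t$ and continuity of the inner integral in $\tau$. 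That exactness is precisely what the approach buys: the ``main obstacle'' you flag --- uniform control of the remainders over all cut elements --- never arises, and the same identity carries over to surface integrals (Theorem \ref{Berggren theorem 6.11}), where a thickness heuristic would be far more delicate because of the jump terms on the skeleton. Your route is more elementary and self-contained; the exact parameterisation is what scales to the rest of the theory.

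Two smaller corrections. First, since $f$ is only assumed to lie in $C^0(\bar{\mathscr{T}}_h)$, the error from the variation of $f$ across the layer is $o(t)$ (by uniform continuity on each $\bar K$), not $O(t^2)$; the limit is unaffected, but the claim as written assumes more regularity than is available (the $O(t^2)$ control of the layer \emph{thickness} is fine, since $\phi$ and $w$ are elementwise affine). Second, on clause (ii): your mechanism is correct --- two-sided agreement can only fail where $\partial\Omega$ meets the jump sets of $f$ and $\bnabla\phi$ in a set of positive surface measure, and since $\phi$ is elementwise affine such an overlap forces $\partial\Omega$ to contain mesh vertices --- but it is worth making that last implication explicit, because it is the entire content of the role of Assumption \ref{assemption 6.4}; indeed the paper itself attributes the agreement of the two semiderivatives to Assumption \ref{assemption 6.4} rather than to Assumption \ref{assemption 6.3}, so your invocation of both assumptions is the safer reading of the statement.
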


\begin{theorem}[\citet{Berggren_2023}]\label{Berggren theorem 6.11}
Consider the perturbation in Equation \eqref{eqn: perturb} according to Assumption \ref{assemption 6.3} of a domain respecting Assumption \ref{assemption 6.4}. The semiderivative of boundary integral
\begin{equation}\label{eqn: boundary integral}
    J_2(\phi)=\int_{\partial\Omega(\phi)}f~\mathrm{d}s
\end{equation}
for $f \in C^1\left(\bar{\mathscr{T}}_h\right)$ satisfies
\begin{align}
\mathrm{d}J_2(\phi, w) & = -\int_{\partial \Omega} \frac{\partial f}{\partial n} \frac{w}{\left|\partial_{\bn} \phi\right|}~\mathrm{d}s-\sum_{S \in \mathscr{S}_h} \int_{\partial \Omega\cap S} \bn^S \cdot \llbracket f \boldsymbol{m} \rrbracket \frac{w}{\left|\partial_{\bn^S} \phi\right|} \,\mathrm{d}\gamma,\label{eqn: 6.36 og}
\end{align}
where $\boldsymbol{n}^S$ is a unit vector located in $S$, outward-directed from $\Omega$, and orthogonal to $\partial \Omega \cap S$ for $d=3$. Moreover, $\llbracket \boldsymbol{f} \boldsymbol{m} \rrbracket=f_1 \boldsymbol{m}_1+f_2 \boldsymbol{m}_2$. Here, $\boldsymbol{m}_k$ is the co-normal for $\tau=0$ defined by %$\bm_k=\bt_k^S\times\bn_{\partial\Omega\cap K_k}$
Equation \eqref{eqn:co-normal definition}, and $f_k$ is the limit $f$ on $S$ defined by $f_k(\boldsymbol{x})=\lim _{\epsilon \rightarrow 0^{+}} f\left(\boldsymbol{x}-\epsilon \boldsymbol{m}_k\right)$ for $\boldsymbol{x} \in \partial \Omega \cap S$.
\end{theorem}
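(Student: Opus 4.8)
The plan is to reduce the boundary integral to a sum of integrals over flat surface patches, one per cut cell, and to differentiate each patch integral with the classical transport (Reynolds) theorem for moving hypersurfaces, collecting the resulting boundary contributions along the element facets. Since $\phi_t=\phi+tw$ is continuous and piecewise affine on $\mathscr{T}_h$, for $t\in(0,t_\mathrm{max}]$ the zero level set $\partial\Omega_t$ intersects each cut cell $K$ in a flat $(d-1)$-polytope $\Gamma_t^K=\partial\Omega_t\cap K$, so that $J_2(\phi_t)=\sum_K\int_{\Gamma_t^K}f\,\mathrm{d}s$. First I would fix a single patch and introduce the interface velocity: differentiating the identity $\phi_t(\boldsymbol{x}(t))=0$ along a trajectory staying on $\partial\Omega_t$ gives, at $t=0$, the normal speed $V_n=\boldsymbol{V}\cdot\bn=-w/|\partial_{\bn}\phi|$, where $\bn=\bnabla\phi/|\bnabla\phi|$ is the outward unit normal on the patch (so $\partial_{\bn}\phi=|\bnabla\phi|$). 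The sign is consistent with $E_t=\Omega\setminus\bar\Omega_t$ shrinking $\Omega$ when $w>0$.

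For a single flat patch the transport theorem reads
\[
\frac{\mathrm{d}}{\mathrm{d}t}\int_{\Gamma_t^K}f\,\mathrm{d}s\Big|_{0^+}=\int_{\Gamma_0^K}\Big(\frac{\partial f}{\partial n}+\kappa f\Big)V_n\,\mathrm{d}s+\int_{\partial\Gamma_0^K}f\,(\boldsymbol{V}\cdot\bm_K)\,\mathrm{d}\gamma,
\]
obtained by writing the material derivative as $\boldsymbol{V}\cdot\bnabla f$ and rewriting the tangential part via the surface divergence theorem, whose co-normal $\bm_K$ produces the boundary integral. The decisive simplification is that each patch is flat, so its mean curvature $\kappa$ vanishes and the bulk term collapses to $\int_{\Gamma_0^K}\partial_n f\,V_n\,\mathrm{d}s$. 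Summing over cut cells, the bulk contributions assemble into $-\int_{\partial\Omega}\frac{\partial f}{\partial n}\frac{w}{|\partial_{\bn}\phi|}\,\mathrm{d}s$, the first term of \eqref{eqn: 6.36 og}; this step mirrors the reduction already used for the volume integral in Theorem \ref{Berggren theorem 6.7}.

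The hard part will be the second term, which requires careful geometric bookkeeping at the creases. The boundary $\partial\Gamma_0^K$ lies on the element facets, and each interior facet $S=\bar K_1\cap\bar K_2$ is shared by two cut cells, so the crease $\partial\Omega\cap S$ is counted once from $K_1$ (with co-normal $\bm_1$ and limiting trace $f_1$) and once from $K_2$ (with $\bm_2$, $f_2$); the two contributions combine into $\int_{\partial\Omega\cap S}\boldsymbol{V}\cdot\llbracket f\bm\rrbracket\,\mathrm{d}\gamma$ with $\llbracket f\bm\rrbracket=f_1\bm_1+f_2\bm_2$. The crucial point is to identify the velocity that actually transports the crease: since the crease is the in-facet zero set $\{\boldsymbol{x}\in S:\phi_t|_S(\boldsymbol{x})=0\}$, it moves within $S$ with velocity $\boldsymbol{V}^S=-\frac{w}{|\partial_{\bn^S}\phi|}\bn^S$, where $\bn^S$ is the in-facet outward normal to the crease and $\partial_{\bn^S}\phi=\bn^S\cdot\bnabla\phi$. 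Substituting $\boldsymbol{V}\cdot\bm_k\to\boldsymbol{V}^S\cdot\bm_k$ and summing over facets yields exactly $-\sum_{S}\int_{\partial\Omega\cap S}\bn^S\cdot\llbracket f\bm\rrbracket\frac{w}{|\partial_{\bn^S}\phi|}\,\mathrm{d}\gamma$. I would verify the internal consistency $\boldsymbol{V}^S\cdot\bn=V_n$ using $\partial_{\bn^S}\phi=|\bnabla\phi|\,(\bn^S\cdot\bn)$, which confirms that the in-facet crease motion is compatible with the normal motion of the patch.

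Finally, Assumption \ref{assemption 6.3} guarantees that the family $\{\Gamma_t^K\}$ has a single consistent parameterisation on $(0,t_\mathrm{max}]$, so the patchwise transport theorem applies uniformly, while Assumption \ref{assemption 6.4} keeps the creases away from mesh vertices, ensuring the crease trajectories are differentiable and that the one-sided limits $t\to0^\pm$ coincide. The regularity $f\in C^1(\bar{\mathscr{T}}_h)$ is exactly what is needed for $\partial f/\partial n$ and the one-sided traces $f_k(\boldsymbol{x})=\lim_{\epsilon\to0^+}f(\boldsymbol{x}-\epsilon\bm_k)$ to be well defined on each patch. The main obstacle remains the crease analysis: correctly orienting $\bn^S$, matching each co-normal $\bm_k$ with the trace $f_k$ taken from the interior of $K_k$, and establishing the velocity $\boldsymbol{V}^S$ by implicit differentiation of the restricted level-set function.
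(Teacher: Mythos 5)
Your proposal is essentially correct, but it takes a genuinely different route from the paper. Note first that the paper does not reprove this statement at all — it is recalled verbatim from \cite{Berggren_2023} — and the closest in-paper arguments (Lemma \ref{lemma: ext lemma 6.10} and Theorem \ref{thrm: ext theorem 6.11}) follow Berggren's discretise-then-differentiate strategy: the dilated region $E_t=\Omega\setminus\bar\Omega_t$ is parameterised exactly for finite $t$, cell-wise integration by parts produces facet terms, and a coarea-type lemma (Berggren's Lemma 6.9) converts integrals over $E_t\cap S$ into $\int_0^t\int_{\partial\Omega_\tau\cap S}(\cdot)\,\frac{w}{|\partial_{\bn^S}\phi_\tau|}\,\mathrm{d}\gamma\,\mathrm{d}\tau$, after which one divides by $t$ and takes $t\to0^+$. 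You instead differentiate at $t=0^+$ directly, applying the surface transport theorem patch by patch, killing the curvature term by flatness, and recovering the facet term from the in-facet motion of the creases. Your key computations check out: $V_n=-w/|\partial_{\bn}\phi|$; the crease velocity $\boldsymbol{V}^S=-\frac{w}{|\partial_{\bn^S}\phi|}\bn^S$; the compatibility $\boldsymbol{V}^S\cdot\bn=V_n$ (valid because $\bnabla\phi$ is constant on each $K$, so $\partial_{\bn^S}\phi=|\bnabla\phi|\,\bn^S\cdot\bn$); and the assembly of the two one-sided contributions into $\bn^S\cdot\llbracket f\bm\rrbracket$, which works precisely because $\bn^S$ and $|\partial_{\bn^S}\phi|$ are single-valued on $S$ by continuity of $\phi$. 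What your route buys is conceptual clarity: it exposes the second term of \eqref{eqn: 6.36 og} as the distributional mean curvature of the polyhedral interface concentrated on the creases, the discrete analogue of the $\kappa f V_n$ term for smooth surfaces. What Berggren's route buys is rigour at minimal regularity and exactness for finite $t$: no transport theorem for a merely Lipschitz, piecewise-smoothly-moving patch family needs to be invoked, the one-sided nature of the limit under Assumption \ref{assemption 6.3} is handled automatically, and the finite-$t$ parameterisation is exactly what extends to the boundary-intersection case (Theorem \ref{thrm: ext theorem 6.11}) and underpins the automatic-differentiation viewpoint of Section \ref{sec: Automatic differentiation}. The one step you should not gloss over is the transport theorem itself: your patches are moving polytopes whose boundary velocity is only piecewise defined facet by facet, so you must construct an explicit flow map (smooth in $t$, mapping creases to creases, with the verified normal-speed compatibility ensuring continuity) before the boundary term $\int_{\partial\Gamma_0^K}f\,\boldsymbol{V}\cdot\bm\,\mathrm{d}\gamma$ is legitimate; under Assumptions \ref{assemption 6.3} and \ref{assemption 6.4} this exists, but it is exactly the bookkeeping that Berggren's parameterisation lemmas supply for free.
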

In the above, $\bn$ is the outward normal of $\partial\Omega$, $\bm_k$ is the co-normal defined by
\begin{equation}\label{eqn:co-normal definition}
    \bm_k=\bt^S_k\times\bn_{\partial\Omega_\tau\cap K_k}
\end{equation}
for $\tau\in[0,t]$, where $\bt^S_k$ is the tangent to $\partial\Omega_\tau\cap S$ for $S\in\mathscr{S}_h$ and $\bn_{\partial\Omega_\tau\cap K_k}$ is the outward normal restricted to $\partial\Omega_\tau\cap K_k$. An illustration of these quantities is given in Figure \ref{fig:geometric-setup} and a detailed discussion can be found in \cite{Berggren_2023}.

\subsection{Extension to non-empty intersections of $\partial\Omega$ and $\partial D$}

% The above results assume that the domain $\Omega$ is entirely enclosed in the background domain $D$. Although this is common in unfitted finite element methods \citep[e.g.,][]{10.1002/nme.4823_2015}, it is less standard in general topology optimisation problems where boundary conditions are typically applied on subsets of the boundary of the background domain. 
Let us now consider an extension to the case where $\partial\Omega$ has a non-empty intersection with $\partial D$ as shown in Figure \eqref{fig:extension-setup}.

\begin{figure}[!t]
    \centering
    % \large
    \def\svgwidth{0.35\textwidth}
    %% Creator: Inkscape 1.2 (dc2aedaf03, 2022-05-15), www.inkscape.org
%% PDF/EPS/PS + LaTeX output extension by Johan Engelen, 2010
%% Accompanies image file '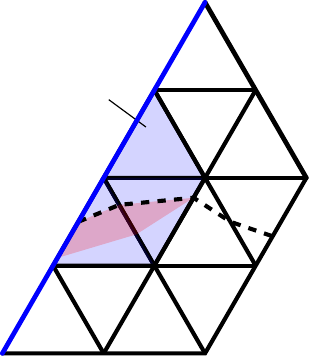' (pdf, eps, ps)
%%
%% To include the image in your LaTeX document, write
%%   \input{<filename>.pdf_tex}
%%  instead of
%%   \includegraphics{<filename>.pdf}
%% To scale the image, write
%%   \def\svgwidth{<desired width>}
%%   \input{<filename>.pdf_tex}
%%  instead of
%%   \includegraphics[width=<desired width>]{<filename>.pdf}
%%
%% Images with a different path to the parent latex file can
%% be accessed with the `import' package (which may need to be
%% installed) using
%%   \usepackage{import}
%% in the preamble, and then including the image with
%%   \import{<path to file>}{<filename>.pdf_tex}
%% Alternatively, one can specify
%%   \graphicspath{{<path to file>/}}
%% 
%% For more information, please see info/svg-inkscape on CTAN:
%%   http://tug.ctan.org/tex-archive/info/svg-inkscape
%%
\begingroup%
  \makeatletter%
  \providecommand\color[2][]{%
    \errmessage{(Inkscape) Color is used for the text in Inkscape, but the package 'color.sty' is not loaded}%
    \renewcommand\color[2][]{}%
  }%
  \providecommand\transparent[1]{%
    \errmessage{(Inkscape) Transparency is used (non-zero) for the text in Inkscape, but the package 'transparent.sty' is not loaded}%
    \renewcommand\transparent[1]{}%
  }%
  \providecommand\rotatebox[2]{#2}%
  \newcommand*\fsize{\dimexpr\f@size pt\relax}%
  \newcommand*\lineheight[1]{\fontsize{\fsize}{#1\fsize}\selectfont}%
  \ifx\svgwidth\undefined%
    \setlength{\unitlength}{148.18583852bp}%
    \ifx\svgscale\undefined%
      \relax%
    \else%
      \setlength{\unitlength}{\unitlength * \real{\svgscale}}%
    \fi%
  \else%
    \setlength{\unitlength}{\svgwidth}%
  \fi%
  \global\let\svgwidth\undefined%
  \global\let\svgscale\undefined%
  \makeatother%
  \begin{picture}(1,1.15233301)%
    \lineheight{1}%
    \setlength\tabcolsep{0pt}%
    \put(0,0){\includegraphics[width=\unitlength,page=1]{Figure2-pdf.pdf}}%
    \put(0.26378277,0.61386762){\color[rgb]{0,0,0}\makebox(0,0)[lt]{\lineheight{0}\smash{\begin{tabular}[t]{l}$\boldsymbol{x}_w$\end{tabular}}}}%
    \put(0.17177157,0.82421991){\color[rgb]{0,0,0}\makebox(0,0)[lt]{\lineheight{0}\smash{\begin{tabular}[t]{l}$\operatorname{supp}w$\end{tabular}}}}%
    \put(0,0){\includegraphics[width=\unitlength,page=2]{Figure2-pdf.pdf}}%
    \put(0.7554765,0.4518076){\color[rgb]{0,0,0}\makebox(0,0)[lt]{\lineheight{0}\smash{\begin{tabular}[t]{l}$\partial\Omega$\end{tabular}}}}%
    \put(0.29169043,0.12266058){\color[rgb]{0,0,0}\makebox(0,0)[lt]{\lineheight{0}\smash{\begin{tabular}[t]{l}$\Omega$\end{tabular}}}}%
    \put(0.61304584,0.94462529){\color[rgb]{0,0,0}\makebox(0,0)[lt]{\lineheight{0}\smash{\begin{tabular}[t]{l}$\partial D$\end{tabular}}}}%
    \put(0.2755417,0.38182724){\color[rgb]{0,0,0}\makebox(0,0)[lt]{\lineheight{0}\smash{\begin{tabular}[t]{l}$E_t$\end{tabular}}}}%
  \end{picture}%
\endgroup%

    \caption{A visualisation of intersections between $\partial\Omega$ and $\partial D$ considered in this work.}
    \label{fig:extension-setup}
\end{figure}

The result of Theorem \ref{Berggren theorem 6.7} and the related results in \cite{Berggren_2023} remain unchanged for the non-empty intersections considered here. This is because the computation in these results is performed locally on each element $K\in\mathscr{T}_h$ and does not require information from adjacent elements. This is quite different from the situation in Theorem \ref{Berggren theorem 6.11}, where the jump of quantities over element interfaces is required. In particular, Lemma 6.10 from \cite{Berggren_2023} must be extended as follows: 

\begin{lemma}\label{lemma: ext lemma 6.10} Let $t\in(0,t_{\mathrm{max}}]$ according to Assumption \ref{assemption 6.3} and consider the perturbation in Equation \eqref{eqn: perturb}. Moreover, let $f\in C^1(\overline{\mathscr{T}}_h)$ and $\bTheta\in L^\infty(E_t)^d$ such that $\bTheta\rvert_{E_t\cap K}\in C^1(\overline{E_t\cap K})^d$ for each $K\in\mathscr{T}_h$. In addition, suppose that we have non-empty $\partial \Omega\cap\partial D$. Then, we have
\begin{align}
    \sum_{K\in\mathscr{T}_h}\int_{E_t\cap K}\left[(\bnabla\cdot\bTheta)f+\bTheta\cdot\bnabla f\right]\,\mathrm{d}V&=\int_{\partial E_t\setminus\partial D}\bn\cdot\bTheta f\,\mathrm{d}s+\sum_{S\in\mathscr{S}_h\setminus\partial D}\int_0^t\int_{\partial\Omega_\tau\cap S}\bn^S\cdot\llbracket f\bTheta\times\bt^S\rrbracket\frac{w}{\lvert\partial_{\bn^S}\phi_\tau\rvert}\,\mathrm{d}\gamma\,\mathrm{d}\tau\nonumber\\
    &\hspace{0.5cm}+\sum_{S\in\mathscr{S}_h\cap\partial D}\int_0^t\int_{\partial\Omega_\tau\cap S}\bn^S\cdot(f\bTheta\times\bt^S)\frac{w}{\lvert\partial_{\bn^S}\phi_\tau\rvert}\,\mathrm{d}\gamma\,\mathrm{d}\tau.
\end{align}
\end{lemma}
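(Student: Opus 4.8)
The plan is to treat this as an element-by-element divergence-theorem identity and to isolate the genuinely new contribution coming from the mesh facets that lie on $\partial D$. First I would collapse the integrand with the product rule, writing the left-hand side as $\sum_{K\in\mathscr{T}_h}\int_{E_t\cap K}\bnabla\cdot(f\bTheta)\,\mathrm{d}V$; this is legitimate because $f\in C^1(\overline{\mathscr{T}}_h)$ and $\bTheta\rvert_{E_t\cap K}\in C^1(\overline{E_t\cap K})^d$, so the divergence theorem applies on each piece to give $\int_{\partial(E_t\cap K)}\bn\cdot\bTheta f\,\mathrm{d}s$. The whole argument then reduces to classifying the boundary $\partial(E_t\cap K)$ into three disjoint parts: the free surfaces lying on the level sets $\partial\Omega$ and $\partial\Omega_t$, the interior mesh facets $S\in\mathscr{S}_h\setminus\partial D$, and the boundary mesh facets $S\in\mathscr{S}_h\cap\partial D$. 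Summing the free-surface contributions over all $K$ assembles the surface $\partial E_t\setminus\partial D$ and produces the first right-hand-side term directly.

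For the interior facets I would reproduce the argument of Lemma 6.10 of \cite{Berggren_2023}. Each such $S=\bar K_1\cap\bar K_2$ receives one flux contribution from each neighbour; using the scalar triple-product identity $\bn^S\cdot(f\bTheta\times\bt^S)=f\,\bTheta\cdot(\bt^S\times\bn^S)=\pm f\,\bn_k\cdot\bTheta$, where $\bt^S\times\bn^S$ is the facet normal up to sign, the two single-sided fluxes combine into the jump $\llbracket f\bTheta\times\bt^S\rrbracket$. Reparameterising $E_t\cap S$ by the boundary-face dilation $E_t\cap S=\bigcup_{\tau\in[0,t]}(\partial\Omega_\tau\cap S)$ and inserting the associated coarea factor $w/\lvert\partial_{\bn^S}\phi_\tau\rvert$ then yields the second term. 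The new work lies in the boundary facets: a facet $S\in\mathscr{S}_h\cap\partial D$ is adjacent to a single element, so there is no neighbouring flux and no cancellation. The same triple-product identity and dilation reparameterisation apply verbatim --- they are intrinsic to $S$ and never reference the absent neighbour --- and deliver the single-sided term $\int_0^t\int_{\partial\Omega_\tau\cap S}\bn^S\cdot(f\bTheta\times\bt^S)\,w/\lvert\partial_{\bn^S}\phi_\tau\rvert\,\mathrm{d}\gamma\,\mathrm{d}\tau$ without a jump bracket. Validity of the parameterisation on $\partial D$ follows from Assumption \ref{assemption 6.3}, which guarantees that $\partial\Omega_\tau\cap S$ sweeps $E_t\cap S$ monotonically for $\tau\in[0,t]$ and keeps the coarea Jacobian non-degenerate, exactly as in the interior case.

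The main obstacle I anticipate is orientation bookkeeping rather than new analysis. Unlike an interior facet, where $E_t\cap S$ is internal to $E_t$, a facet $S\subset\partial D$ contributes a piece of $\partial E_t\cap\partial D$, so I must check that the conventions fixing $\bn^S$ (located in $S$, outward from $\Omega$, orthogonal to $\partial\Omega_\tau\cap S$) and $\bt^S$ align with the outward normal of $D$, ensuring the single-sided term carries the correct sign. Crucially, restricting the first term to $\partial E_t\setminus\partial D$ is precisely what prevents the $\partial D$ portion of $\partial E_t$ from being counted both as a free surface and as a reparameterised facet term; making this exclusion match the facet decomposition exactly is the one place where care is genuinely required.
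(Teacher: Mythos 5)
Your proposal is correct and follows essentially the same route as the paper's proof: element-wise integration by parts, a three-way split of the summed boundary contributions into the free surface $\partial E_t\setminus\partial D$, interior facets (yielding the jump term via Berggren's Equation (6.34)), and single-element boundary facets on $\partial D$, with the coarea/dilation reparameterisation (Berggren's Lemma 6.9) and the triple-product identity $f\bn\cdot\bTheta = f(\bt^S\times\bn^S)\cdot\bTheta = \bn^S\cdot(f\bTheta\times\bt^S)$ handling the one-sided term exactly as the paper does. The orientation bookkeeping you flag as the main obstacle is resolved in the paper precisely by the convention $\bn=\bt^S\times\bn^S$ you anticipate, so there is no substantive difference.
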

\begin{proof}
For the case where $\partial \Omega\cap\partial D$ is empty, the proof is as in Lemma 6.10. from \cite{Berggren_2023}.

For the case of non-empty $\partial \Omega\cap\partial D$, we assume that $E_t=\Omega\setminus\overline{\Omega}_t$ is nonempty and denote cells $K\in\mathscr{T}_h$ with faces $S\in\mathscr{S}_h$. Integrating $(\bnabla\cdot\bTheta)f$ on $E_t\cap K$ by parts gives
\begin{equation}
    \int_{E_t\cap K}\left[(\bnabla\cdot\bTheta)f+\bTheta\cdot\bnabla f\right]\,\mathrm{d}V=\int_{\partial(E_t\cap K)}f\bn\cdot\bTheta\,\mathrm{d}s,\quad K\in\mathscr{T}_h\label{int by parts b4 sum}
\end{equation}
and summing the resulting terms yields
\begin{align}
    &\sum_{K\in\mathscr{T}_h}\int_{E_t\cap K}\left[(\bnabla\cdot\bTheta)f+\bTheta\cdot\bnabla f\right]\,\mathrm{d}V\nonumber\\
    &=\int_{\partial E_t\setminus\partial D}f\bn\cdot\bTheta \,\mathrm{d}s+\sum_{S\in\mathscr{S}_h\setminus\partial D}\int_{E_t\cap S}f_1\bn_1\cdot\bTheta_1+f_2\bn_2\cdot\bTheta_2\,\mathrm{d}s+\sum_{S\in\mathscr{S}_h\cap\partial D}\int_{E_t\cap S}f\bn\cdot\bTheta\,\mathrm{d}s.\label{int by parts1}
\end{align}
When $E_t\cap\partial D$ is empty, the last term of Equation \eqref{int by parts1} vanishes and the remaining terms are as in Equation (6.32) from \cite{Berggren_2023}, so the proof is complete. 

For non-empty $E_t\cap\partial D$, application of Lemma 6.9 from \cite{Berggren_2023} to the last two integrals in Equation \eqref{int by parts1} gives
\begin{equation}
\int_{E_t\cap S}[f_1\bn_1\cdot\bTheta_1+f_2\bn_2\cdot\bTheta_2]\,\mathrm{d}s = \int_0^t\int_{\partial\Omega_\tau\cap S}\left(f_1\bn_1\cdot\bTheta_1+f_2\bn_2\cdot\bTheta_2\right)\frac{w}{\lvert\partial_{\bn^S}\phi_\tau\rvert}\,\mathrm{d}\gamma\,\mathrm{d}\tau,\quad S\in\mathscr{S}_h\setminus\partial D, \label{eq: 8}
\end{equation}
and
\begin{equation}
\int_{E_t\cap S}f\bn\cdot\bTheta\,\mathrm{d}s = \int_0^t\int_{\partial\Omega_\tau\cap S}\left(f\bn\cdot\bTheta\right)\frac{w}{\lvert\partial_{\bn^S}\phi_\tau\rvert}\,\mathrm{d}\gamma\,\mathrm{d}\tau,\quad S\in\mathscr{S}_h\cap\partial D.\label{eq: 9}
\end{equation}
For Equation \eqref{eq: 8} we have 
\begin{equation}
\left.f_1\bn_1\cdot\bTheta_1+f_2\bn_2\cdot\bTheta_2 = \bn^S\cdot\llbracket f\bTheta\times\bt^S\rrbracket\right\rvert_S\label{eq: 10}
\end{equation}
by Equation (6.34) from \cite{Berggren_2023}. On the other hand, for Equation \eqref{eq: 9} the term $f\bn\cdot\bTheta$ is single valued and has no jump. This is the same case for $\bt^S$. As a result, we have
\begin{equation}
    f\bn\cdot\bTheta = f\left(\bt^S\times\bn^S\right)\cdot\bTheta = \left.\bn^S\cdot\left(f\bTheta\times\bt^S\right)\right\rvert_S,\label{eq: 11}
\end{equation}
where we use that $\bn=\bt^S\times\bn^S$ and properties of the triple product in the first and second equalities, respectively.

Substituting Equations \eqref{eq: 10} and \eqref{eq: 11} into Equations \eqref{eq: 8} and \eqref{eq: 9}, and substituting the result into Equation \eqref{int by parts1} completes the proof.
\end{proof}

The extension of Theorem \ref{Berggren theorem 6.11} to the case of non-empty $\partial\Omega\cap\partial D$ is then as follows:

\begin{theorem}\label{thrm: ext theorem 6.11}
Consider the perturbation in Equation \eqref{eqn: perturb} according to Assumption \ref{assemption 6.3} of a domain respecting Assumption \ref{assemption 6.4}. Suppose in addition that we have non-empty $\partial \Omega\cap\partial D$. The semiderivative of Equation \eqref{eqn: boundary integral} for $f \in C^1\left(\bar{T}_h\right)$ satisfies 
\begin{align}
\mathrm{d} J_2(\phi, w) =-\int_{\partial \Omega} \frac{\partial f}{\partial n} \frac{w}{\left|\partial_{\bn} \phi\right|}~\mathrm{d}s-\sum_{S \in \mathscr{S}_h\setminus\partial D} \int_{\partial \Omega\cap S} \bn^S \cdot \llbracket f \boldsymbol{m} \rrbracket \frac{w}{\left|\partial_{\bn^S} \phi\right|} \,\mathrm{d}\gamma-\sum_{S \in \mathscr{S}_h\cap\partial D} \int_{\partial \Omega\cap S} \bn^S \cdot (f \boldsymbol{m}) \frac{w}{\left|\partial_{\bn^S} \phi\right|} \,\mathrm{d}\gamma.\label{eqn: 6.36 ext}
\end{align}
\end{theorem}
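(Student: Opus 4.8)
The plan is to follow the proof of Theorem \ref{Berggren theorem 6.11} in \cite{Berggren_2023} essentially verbatim, replacing its single use of Lemma 6.10 with the extended Lemma \ref{lemma: ext lemma 6.10}. The only structural change is that the face sum $\sum_{S\in\mathscr{S}_h}$ splits into an interior part over $S\in\mathscr{S}_h\setminus\partial D$, which retains the jump $\llbracket f\bm\rrbracket$, and a boundary part over $S\in\mathscr{S}_h\cap\partial D$, which carries the single-valued integrand $f\bm$. I would begin from the definition of the semiderivative, $\mathrm{d}J_2(\phi,w)=\lim_{t\to0^+}\frac{1}{t}(J_2(\phi_t)-J_2(\phi))$, and rewrite the increment $J_2(\phi_t)-J_2(\phi)$ using the divergence identity of Lemma \ref{lemma: ext lemma 6.10}.

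First I would fix the field $\bTheta$ exactly as in the proof of Theorem \ref{Berggren theorem 6.11}, namely the unit normal to the interface restricted to each cut cell $K$. Because $\phi_\tau$ is piecewise linear, the cuts $\partial\Omega_\tau\cap K$ are planar, so $\bTheta$ is piecewise constant, $\bnabla\cdot\bTheta$ vanishes on each $E_t\cap K$, and $\bTheta\cdot\bnabla f$ restricts to $\partial f/\partial n$ on the interface. With this choice the left-hand side of Lemma \ref{lemma: ext lemma 6.10} reduces to $\int_{E_t}\partial f/\partial n\,\mathrm{d}V$, while the first boundary term $\int_{\partial E_t\setminus\partial D}\bn\cdot\bTheta f\,\mathrm{d}s$ equals $J_2(\phi)-J_2(\phi_t)$ up to a correction of order $O(t^2)$, since the $E_t$-outward normal agrees with $+\bTheta$ on the $\partial\Omega$ portion and with $-\bTheta$ on the $\partial\Omega_t$ portion (the latter matching only up to the $O(t)$ rotation of $\bnabla\phi_t$ relative to $\bnabla\phi$). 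Rearranging gives $J_2(\phi_t)-J_2(\phi)=-\int_{E_t}\partial f/\partial n\,\mathrm{d}V+(\text{face terms})+O(t^2)$.

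Next I would take the limit term by term. For the volume contribution, Theorem \ref{Berggren theorem 6.7} applied to the volume functional with integrand $g=\partial f/\partial n$ gives $\lim_{t\to0^+}\frac{1}{t}\int_{E_t}g\,\mathrm{d}V=\int_{\partial\Omega}\frac{\partial f}{\partial n}\frac{w}{|\partial_{\bn}\phi|}\,\mathrm{d}s$, since $\int_{E_t}g\,\mathrm{d}V$ is minus the increment of that functional under the perturbation; together with the overall minus sign from the rearrangement this supplies the first term of Equation \eqref{eqn: 6.36 ext}. Each of the two face sums from Lemma \ref{lemma: ext lemma 6.10} has the form $\int_0^t(\cdots)\,\mathrm{d}\tau$; dividing by $t$ and using the fundamental theorem of calculus evaluates the inner integrand at $\tau=0$, i.e. on $\partial\Omega\cap S$. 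Finally I would rewrite the co-normal through its definition \eqref{eqn:co-normal definition}: with $\bTheta$ the interface normal one has $\bTheta\times\bt^S=-\bm$, so that the interior integrand $\bn^S\cdot\llbracket f\bTheta\times\bt^S\rrbracket$ becomes $-\bn^S\cdot\llbracket f\bm\rrbracket$ and the boundary integrand $\bn^S\cdot(f\bTheta\times\bt^S)$ becomes $-\bn^S\cdot(f\bm)$. Collecting the three contributions reproduces Equation \eqref{eqn: 6.36 ext}.

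The main obstacle, and the only genuinely new point relative to \cite{Berggren_2023}, is controlling the geometry near the triple intersection $\partial\Omega\cap\partial D$ and confirming that no additional lower-dimensional contributions appear there. I would need to check that the decomposition $\partial E_t=(\partial E_t\setminus\partial D)\cup(\partial E_t\cap\partial D)$ underlying Lemma \ref{lemma: ext lemma 6.10} is exhaustive, and that the moving free-boundary pieces $\partial\Omega$ and $\partial\Omega_t$ lie entirely in $\partial E_t\setminus\partial D$, so that the first term is unaffected by the intersection. I would also need the co-normal $\bm$ and the in-face normal $\bn^S$ to remain well defined on faces $S\in\mathscr{S}_h\cap\partial D$, where the absence of a neighbouring cell removes the jump; this single-valuedness is precisely the content of Equation \eqref{eq: 11}. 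Finally, the interchange of the limit $t\to0^+$ with the face integrals must be justified under Assumptions \ref{assemption 6.3} and \ref{assemption 6.4}, which guarantee a unique parameterisation of $E_t$ and agreement of the one-sided semiderivatives. Once this geometric bookkeeping is in place, the remaining manipulations are identical to the enclosed case treated in \cite{Berggren_2023}.
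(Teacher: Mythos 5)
Your proposal matches the paper's own proof, which is exactly this: invoke Lemma \ref{lemma: ext lemma 6.10} in place of Berggren's Lemma 6.10 and then proceed analogously to the proof of Theorem \ref{Berggren theorem 6.11} in \cite{Berggren_2023}, with the face sum splitting into the interior faces $\mathscr{S}_h\setminus\partial D$ (carrying the jump $\llbracket f\bm\rrbracket$) and the boundary faces $\mathscr{S}_h\cap\partial D$ (carrying the single-valued $f\bm$). Your additional reconstruction of Berggren's argument (choice of $\bTheta$, the coarea-type limit, and the single-valuedness of Equation \eqref{eq: 11} on $\partial D$) is consistent with this and goes beyond the level of detail the paper itself records.
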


Using Lemma \ref{lemma: ext lemma 6.10}, the proof of the extension in Theorem \ref{thrm: ext theorem 6.11} is analogous to the one given in \cite{Berggren_2023}. %\textcolor{red}{\textbf{Do we need anything else here? The only other part of the proof that needs thought is Eqn. 6.44 but it is essentially the same because we compute over PartialEt set minus partial D...}} 
Note that assumption \ref{assemption 6.4} restricts the analysis to the case that $\Omega$ is not part of $\partial D$, namely $\phi(\boldsymbol{x})\neq0$ on $\partial D$, except for measure-zero intersections (i.e., point intersections for $d=2$ or surface intersections for $d=3$). %This follows from Assumption \ref{assemption 6.4}.%The reason for this condition is two-fold. First, it is a direct requirement of Assumption \ref{assemption 6.4}. In addition, the first term on the right-hand side of Lemma \ref{lemma: ext lemma 6.10} is defined over $\partial E_t\setminus\partial D$. This means that the computation in (6.44) by Berggren (2023) only holds on the part of the interface not aligned with the boundary of the hold-all domain $D$.

\subsection{Directional derivative of flux integrals}
The following result considers the directional derivative of flux integrals of the form
\begin{equation}\label{eqn: J3}
J_3(\phi)=\int_{\partial\Omega(\phi)}\boldsymbol{f}\cdot\boldsymbol{n}~\mathrm{d}s.
\end{equation}
These are necessary for problems involving loads over $\partial\Omega$ (e.g., fluid-structure interaction).

\begin{lemma}
    Consider the perturbation in Equation \eqref{eqn: perturb} according to Assumption \ref{assemption 6.3} of a domain respecting Assumption \ref{assemption 6.4} and where the intersection $\partial\Omega\cap\partial D$ is empty. The semiderivative of Equation \eqref{eqn: J3} for $\boldsymbol{f} \in \left[C^1\left(\bar{\mathscr{T}}_h\right)\right]^d$ satisfies
\begin{equation}
    \mathrm{d}J_3(\phi ; w)=-\int_{\partial \Omega} \boldsymbol{\nabla}\cdot\boldsymbol{f} \frac{w}{\left|\partial_{\bn} \phi\right|}~\mathrm{d}s.
\end{equation}
\end{lemma}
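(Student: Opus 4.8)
The plan is to remove the dependence of the integrand on the perturbation-dependent normal by converting the flux integral into a volume integral, thereby reducing the statement to the volume result already established in Theorem~\ref{Berggren theorem 6.7}. This detour is necessary because $J_3$ is \emph{not} a special case of the boundary functional $J_2$: the integrand $\boldsymbol{f}\cdot\boldsymbol{n}$ involves the outward normal $\boldsymbol{n}=\boldsymbol{n}(\phi)$, which itself varies under $\phi_t=\phi+tw$, so Theorem~\ref{Berggren theorem 6.11} cannot be applied with a fixed integrand. Since the lemma assumes $\partial\Omega\cap\partial D=\emptyset$, the boundary of $\Omega$ consists solely of the interface and carries no contribution from $\partial D$, which keeps the reduction clean.

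First I would invoke the divergence theorem to write
\begin{equation}
J_3(\phi)=\int_{\partial\Omega(\phi)}\boldsymbol{f}\cdot\boldsymbol{n}\,\mathrm{d}s=\int_{\Omega(\phi)}\bnabla\cdot\boldsymbol{f}\,\mathrm{d}\boldsymbol{x}.
\end{equation}
Because $\boldsymbol{f}\in\left[C^1\left(\bar{\mathscr{T}}_h\right)\right]^d$, the integrand $g:=\bnabla\cdot\boldsymbol{f}$ belongs to $C^0(\bar{\mathscr{T}}_h)$, so the right-hand side is precisely the volume functional of Theorem~\ref{Berggren theorem 6.7} with $f$ replaced by $g$. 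Applying that theorem under the perturbation \eqref{eqn: perturb} and substituting $g=\bnabla\cdot\boldsymbol{f}$ then gives
\begin{equation}
\mathrm{d}J_3(\phi;w)=-\int_{\partial\Omega}(\bnabla\cdot\boldsymbol{f})\,\frac{w}{\lvert\partial_{\bn}\phi\rvert}\,\mathrm{d}s,
\end{equation}
which is the claimed identity.

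I expect the only delicate point to be the justification of the divergence theorem, since $\boldsymbol{f}$ is a priori only piecewise $C^1$ across $\mathscr{T}_h$. Integrating by parts element by element on each $\Omega\cap K$ yields, besides the interface flux on $\partial\Omega\cap K$, contributions on the element faces that sum to interior-face terms of the form $\sum_{S\in\mathscr{S}_h}\int_{\Omega\cap S}(\boldsymbol{f}_1\cdot\bn_1+\boldsymbol{f}_2\cdot\bn_2)\,\mathrm{d}s$, in the notation of Lemma~\ref{lemma: ext lemma 6.10}. These vanish precisely when $\boldsymbol{f}$ is continuous across faces (so that $\boldsymbol{f}_1=\boldsymbol{f}_2$ and $\bn_1=-\bn_2$), which is the natural regularity for a physical flux such as a stress field in a fluid-structure interaction problem; this is the setting in which the clean formula holds. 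Were $\boldsymbol{f}$ allowed genuine jumps, differentiating these residual face integrals with Lemma~6.9 of \cite{Berggren_2023}, exactly as in the proof of Theorem~\ref{Berggren theorem 6.11}, would generate additional $\partial\Omega\cap S$ terms and the derivative would no longer collapse to the single volume expression above.
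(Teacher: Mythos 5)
Your proposal is correct and follows exactly the paper's own argument: apply the divergence theorem to rewrite $J_3$ as a volume integral of $\bnabla\cdot\boldsymbol{f}$ over $\Omega(\phi)$, then invoke Theorem~\ref{Berggren theorem 6.7}. Your additional remark on the element-by-element justification of the divergence theorem (and the need for $\boldsymbol{f}$ to be continuous across faces so the interior-face jump terms cancel) is a worthwhile refinement of a point the paper passes over silently, but it does not constitute a different proof.
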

\begin{proof}
    Since $\boldsymbol{f} \in \left[C^1\left(\bar{\mathscr{T}}_h\right)\right]^d$ and the intersection $\partial\Omega\cap\partial D$ is empty, Equation \eqref{eqn: J3} can be rewritten using the divergence theorem as
    \begin{equation}
J_3(\phi)=\int_{\Omega(\phi)}\boldsymbol{\nabla}\cdot\boldsymbol{f}~\mathrm{d}s.
    \end{equation}
    Application of Theorem \ref{Berggren theorem 6.7} yields the desired result.
\end{proof}

\begin{remark}
We note that extension of this result to the case of non-empty $\partial\Omega\cap\partial D$ as well as more general integrands (e.g., $g(\boldsymbol{n})$) requires further mathematical development. The latter has been considered in the context of domain transformations \cite[e.g.,][]{10.1007/s00466-017-1383-6_2017}, however, the analog in the framework of \cite{Berggren_2023} is not immediately clear.
\end{remark}

\section{Automatic differentiation}\label{sec: Automatic differentiation}

In this section, we consider automatic differentiation for unfitted discretisations. We first recall some basic properties of dual numbers. Then we consider how dual numbers propagate through quadrature for unfitted discretisations and verify that we exactly recover the semiderivatives from the previous section.

\subsection{Dual numbers}
Forward-mode automatic differentiation, in which the function and its derivative are computed concurrently, is based on the properties of dual numbers to evaluate derivatives exactly \cite{Baydin_Pearlmutter_Radul_Siskind_2018}. A dual number is defined as $a+b\varepsilon$ with the property $\varepsilon^2=0$. As a result, the Taylor series of a differentiable function $f(a+b\varepsilon)$ about $a$ is
\begin{equation}\label{eqn: taylor duals}
    f(a+b\varepsilon)=f(a)+b\varepsilon f'(a)
\end{equation}
where the higher order terms vanish thanks to $\varepsilon^2=0$. This allows us to recover the derivative of $f$ exactly via the dual component of $f(a+b\varepsilon)$.

\subsection{Automatic differentiation for unfitted discretisations}
We now demonstrate how dual numbers propagate through quadrature on a cut triangulation. In particular, we consider piecewise linear cuts defined via a level-set function (e.g., Figure \ref{fig:fig 3}).

\begin{figure}[!t]
    \centering
    \includegraphics[width=0.5\linewidth]{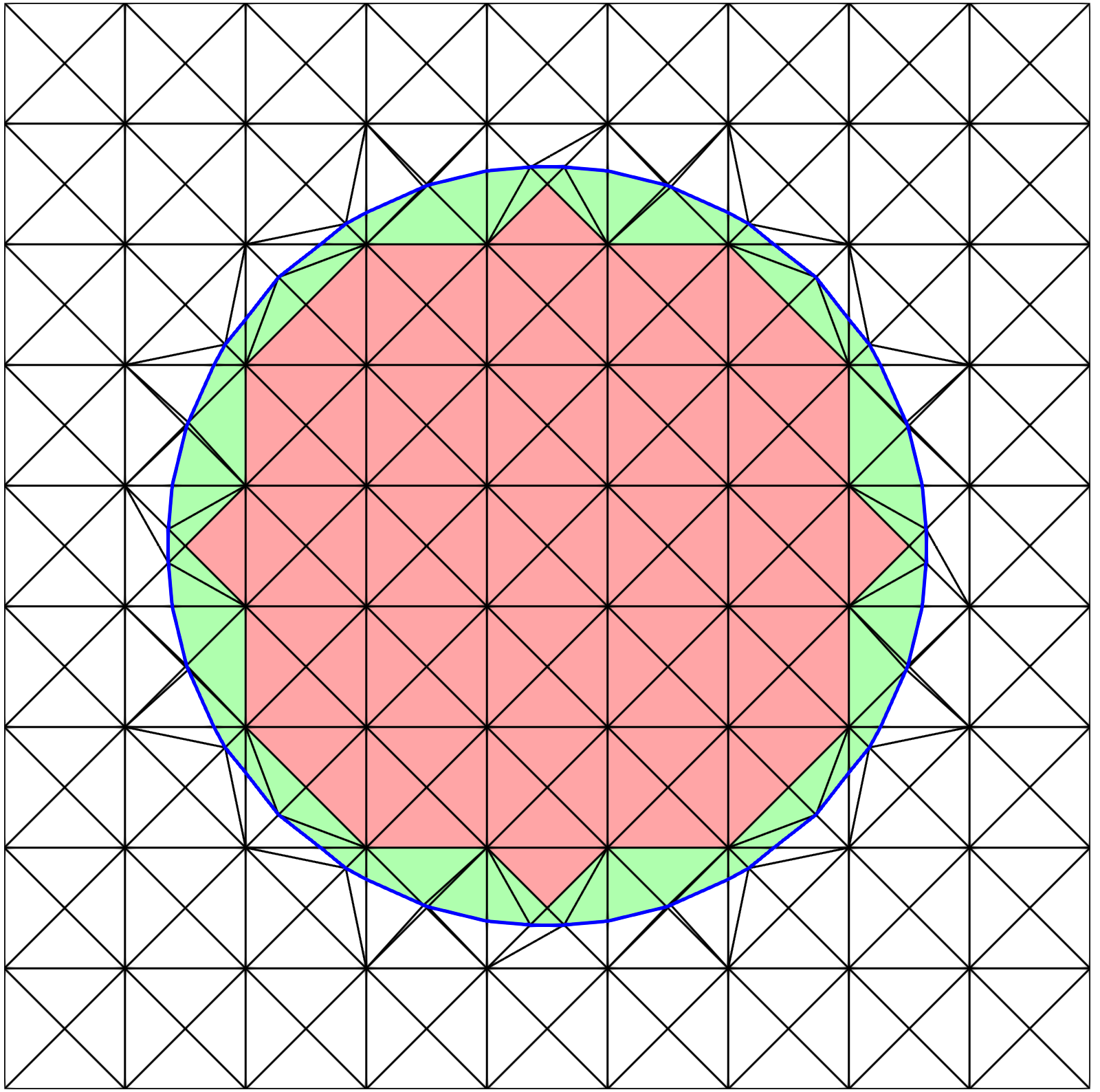}
    \caption{An example subtriangulation of a domain $\Omega(\phi)$ defined by $\phi(\boldsymbol{x})\leq0$. The green cells, denoted $\Omega_{h,\text{CUT}}(\phi)$, are those that include the interface $\Gamma(\phi)$. The red cells, denoted $\Omega_{h,\text{IN}}$, are the remaining internal cells.}
    \label{fig:fig 3}
\end{figure}

Consider the functional
\begin{equation}
    J_1(\phi)=\int_{\Omega(\phi)} f~\mathrm{d}\boldsymbol{x}
\end{equation}
where, for the purpose of this demonstration, we let $\Omega(\phi)\subset D\subset\mathbb{R}^2$ as in Figure \ref{fig:fig 3}, and $f$ is sufficiently differentiable.

Suppose that we generate a fully-symplectic subtriangulation of the background mesh $\mathscr{T}_h$ at the interface defined by $\phi(\boldsymbol{x})=0$. This can be accomplished using the algorithms described in \cite{10.1002/nme.4823_2015} and the references therein. We denote this subtriangulation as $\Omega_h(\phi)=\Omega_{h,\text{CUT}}\cup\Omega_{h,\text{IN}}$ where $\Omega_{h,\text{CUT}}$ and $\Omega_{h,\text{IN}}$ are the simplices that include the interface ($\phi(\boldsymbol{x})=0$) and the bulk ($\phi(\boldsymbol{x})<0$), respectively. Figure \ref{fig:fig 3} shows an example visualisation of this subtriangulation. Finally, we let $K(\phi)\in\Omega_{h,\text{CUT}}$ and $K\in\Omega_{h,\text{IN}}$ denote a cut subcell and bulk element, respectively. We assume Assumption \ref{assemption 6.3} and \ref{assemption 6.4} are satisfied, as such the topology of $\Omega_{h,\text{CUT}}$ and $\Omega_{h,\text{IN}}$ are invariant under small perturbations of $\phi$ while the geometry of elements of $\Omega_{h,\text{CUT}}$ can vary, hence the notational dependence of $K(\phi)\in\Omega_{h,\text{CUT}}$. %that the latter does not depend on $\phi$ as it does not possess cuts, while the former depends on $\phi$ for the computation of intersection points.

The functional $J_1$ can now be rewritten as
\begin{align}
    J_1(\phi)=\sum_{K(\phi)\in\Omega_{h,\text{CUT}}}\int_{K(\phi)} f~\mathrm{d}\boldsymbol{x}+\sum_{K\in\Omega_{h,\text{IN}}}\int_{K} f~\mathrm{d}\boldsymbol{x}.
\end{align}
Let us focus our attention on the integral over $K(\phi)$ and assume without loss of generality that the element is cut as in Figure \ref{fig: 4}.
\begin{figure}
    \centering
    \def\svgwidth{0.65\textwidth}
    % \large
    \input{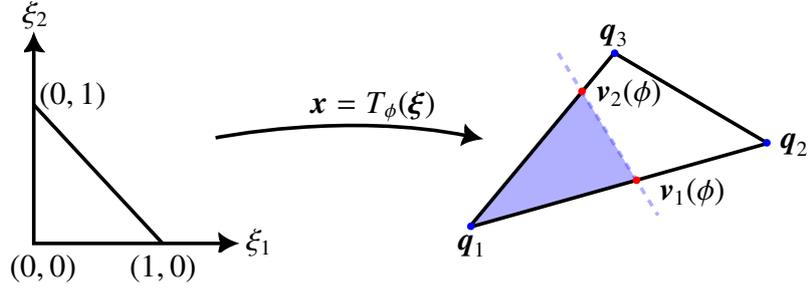}
    \caption{A visualisation of the map $T_\phi:\hat{K}\rightarrow K(\phi)$ from the reference element $\hat{K}$ to a cut subcell $K(\phi)$. The intersection points $\boldsymbol{v}_1(\phi)$ and $\boldsymbol{v}_2(\phi)$ of the cut subcell are computed using $\phi(\boldsymbol{q}_1)$, $\phi(\boldsymbol{q}_2)$, and $\phi(\boldsymbol{q}_3)$ via interpolation.} %An analogous transformation can be defined for the mapping of the line segment $\hat{F}$ connecting the points $(0,1)$ and $(1,0)$ to the segment connecting }
    \label{fig: 4}
\end{figure}
In addition, suppose that we have $T_\phi:\hat{K}\rightarrow K(\phi)$ that maps the first-order reference element to the cut subcell. For the cut subcell in Figure \ref{fig: 4}, the mapping $T_\phi$ is defined as
\begin{equation}
    T_\phi(\boldsymbol{\xi})=J_{K(\phi)}\xi+\boldsymbol{q}_1
\end{equation}
where $\boldsymbol{q}_1=(x_{\boldsymbol{q}_1},y_{\boldsymbol{q}_1})$ and $J_{K(\phi)}$ is the Jacobian of $T$ defined as
\begin{equation}
    J_{K(\phi)}=\begin{pmatrix}
        x_{\boldsymbol{v}_1}(\phi)-x_{\boldsymbol{q}_1}&x_{\boldsymbol{v}_2}(\phi)-x_{\boldsymbol{q}_1}\\
        y_{\boldsymbol{v}_1}(\phi)-y_{\boldsymbol{q}_1}&y_{\boldsymbol{v}_2}(\phi)-y_{\boldsymbol{q}_1}
    \end{pmatrix}.
\end{equation}
The above coordinates $\boldsymbol{v_1}$ and $\boldsymbol{v_2}$ depend on $\phi$ via the following interpolants
\begin{align}
    \boldsymbol{v}_1(\phi)&=\boldsymbol{q}_1+\frac{\lvert\phi(\boldsymbol{q}_1)\rvert}{\lvert\phi(\boldsymbol{q}_1)\rvert+\lvert\phi(\boldsymbol{q}_2)\rvert}(\boldsymbol{q}_2-\boldsymbol{q}_1),\label{eqn: interp 1}\\
    \boldsymbol{v}_2(\phi)&=\boldsymbol{q}_1+\frac{\lvert\phi(\boldsymbol{q}_1)\rvert}{\lvert\phi(\boldsymbol{q}_1)\rvert+\lvert\phi(\boldsymbol{q}_3)\rvert}(\boldsymbol{q}_3-\boldsymbol{q}_1)\label{eqn: interp 2}.
\end{align}
At this point, our map $T_\phi$ is well defined and we can proceed with our quadrature rule:
\begin{align}
    \int_{K(\phi)} f~\mathrm{d}\boldsymbol{x}=\int_{\hat{K}} f(T_\phi(\boldsymbol{\xi}))\lvert J_{K(\phi)}\rvert~\mathrm{d}\boldsymbol{\xi}=\sum_{i=1}^N\omega_if(T_\phi(\boldsymbol{\xi}_i))\lvert J_{K(\phi)}\rvert.
\end{align}
Thus, our original functional $J_1$ can be written as
\begin{align}\label{eqn: quadrature with phi depend}
    J_1(\phi)=\sum_{K(\phi)\in\Omega_{h,\text{CUT}}}\sum_{i=1}^N\omega_if(T_\phi(\boldsymbol{\xi}_i))\lvert J_{K(\phi)}\rvert+\sum_{K\in\Omega_{h,\text{IN}}}\sum_{i=1}^N\omega_if(T(\boldsymbol{\xi}_i))\lvert J_{K}\rvert%\int_{K} f~\mathrm{d}\boldsymbol{x}
\end{align}
where the latter integral has been rewritten using a standard quadrature rule.

Now that the dependence of the quadrature rule in Equation \eqref{eqn: quadrature with phi depend} on $\phi$ is well defined, we can consider computing the directional derivative of $J_1$ with respect to $\phi$ in the direction $w$ using dual numbers. 

Let $F(s)=J_1(\phi+sw)$ and consider the Taylor series of $F(t+\varepsilon)$ about $t=0$. Equation \eqref{eqn: taylor duals} gives
\begin{equation}
    F(\varepsilon) = F(0) + \varepsilon F'(t)\rvert_{t=0},
\end{equation}
In terms of $J_1$ this becomes
\begin{equation}
    J_1(\phi+\varepsilon w)=J_1(\phi)+\varepsilon\left.\frac{\mathrm{d}}{\mathrm{d}t}\right\rvert_{t=0}J_1(\phi+t w)=J_1(\phi)+\varepsilon~\mathrm{d}J_1(\phi,w),%\left.\D{}{t}\right|_{t=0} J_1'(\phi+tw).
\end{equation}
where the last equality is purely notational. Thus, we recover the directional derivative as the dual component of $J_1(\phi+\varepsilon w)$. Importantly, we capture the change in shape due to the propagation of dual numbers through the map $T_\phi$ and its Jacobian $\lvert J_{K(\phi)}\rvert$ .

From an algorithmic point of view, computing the derivative using automatic differentiation can be summarised as follows:
\begin{enumerate}
    \item Construct the dual numbers corresponding to the level-set values at each node (i.e., replace $\phi_i$ by $\phi_i+\varepsilon_i$ where $\varepsilon_i$ is the corresponding dual number at node $i$).
    \item Compute the nodal coordinates of the cut subcells using $\phi_i+\varepsilon_i$ in Equation \eqref{eqn: interp 1} and \eqref{eqn: interp 2}.
    \item Construct the map $T_{\phi_i+\varepsilon_i}$ and the corresponding Jacobian $J_{K(\phi_i+\varepsilon_i)}$.
    \item Compute the integral using the quadrature rule in Equation \eqref{eqn: quadrature with phi depend} and assemble the vector of dual components. 
\end{enumerate}

\begin{remark}
    Note that the above demonstrative example is verbatim for the case of $J_2$ and $J_3$ in Equation \eqref{eqn: boundary integral} and \eqref{eqn: J3}, respectively, with the exception that the normal in $J_3$ should be calculated using the nodal coordinates resulting from Step (2) of the above summary. This ensures that the dual numbers are propagated correctly in the calculation of the normal vector. 
\end{remark}

\begin{remark}
    We further note that, among other things, the above technique can also be used to evaluate derivatives of functionals that depend on additional fields (e.g., solutions to finite element problems) or whose rigorous mathematical counterpart is not yet known. For example, 
    \begin{equation}
        J_4(\phi)=\int_{\partial\Omega(\phi)} g(\boldsymbol{n})~\mathrm{d}s.
    \end{equation}
    For functionals that depend on the solutions to PDEs, C\'ea's method can be utilised \citep{10.1051/m2an/1986200303711_1986}.
\end{remark}

\subsection{Implementation}\label{sec: ad implem}
We implement the analytic directional derivatives discussed in Section \ref{sec: Shape derivatives for unfitted methods} and automatic differentiation methods discussed above in the Julia package GridapTopOpt \cite{GridapTopOpt} using the machinery provided by GridapEmbedded \cite{GridapEmbedded} and Gridap \cite{Badia2020,Verdugo2022}. GridapEmbedded provides algorithms and abstract methods for computation on subtriangulations, while Gridap provides the underlying finite element method. We leverage GridapDistributed \cite{Badia2022} to implement both the analytic derivatives and automatic differentiation for unfitted discretisations in both serial and distributed computing frameworks. The latter readily follows from the serial implementation, as all computation is cell-wise. As such, the serial implementation can be mapped over each local partition.

It should be noted that for shape differentiation of functionals that depend on the solutions to PDEs, C\'ea's method \citep{10.1051/m2an/1986200303711_1986} is utilised in conjunction with automatic shape differentiation. We refer the reader to Section 2.6 of \citet{GridapTopOpt} for further discussion.

\subsection{Verification}
In this section, we verify that automatic differentiation for unfitted discretisations can exactly compute their analytic counterparts. To show this, we consider the four level-set functions shown in Figure \ref{fig: fig 5}. 

\begin{figure}[!t]
    \centering
    \begin{subfigure}{0.43\textwidth}
        \centering
        \includegraphics[width=\textwidth]{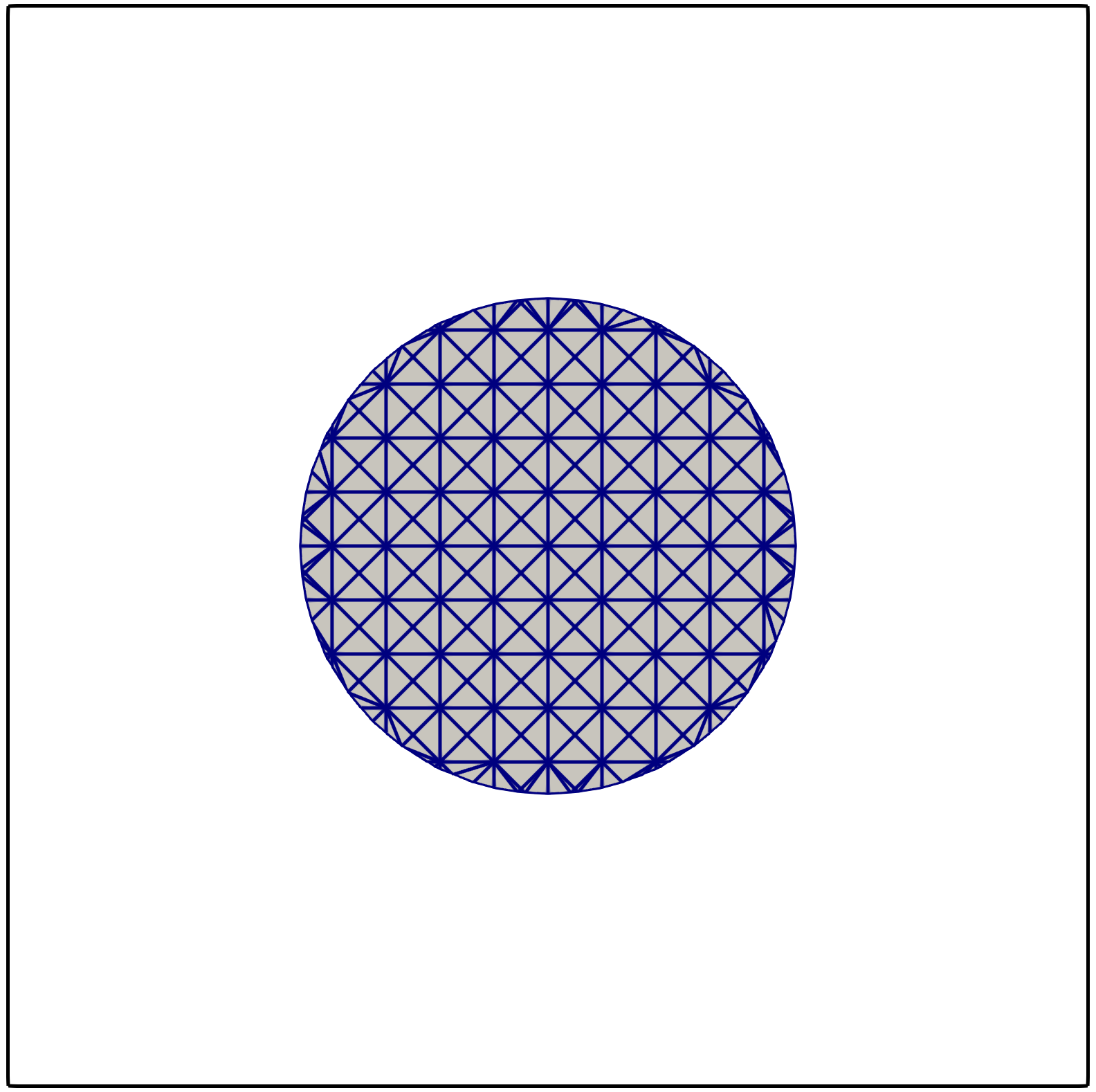}
        \caption{$\sqrt{(x-0.5)^2+(y-0.5)^2}-0.23\leq0$}
        \label{5a}
    \end{subfigure}
    \hspace{1cm}
    \begin{subfigure}{0.43\textwidth}
        \centering
        \includegraphics[width=\textwidth]{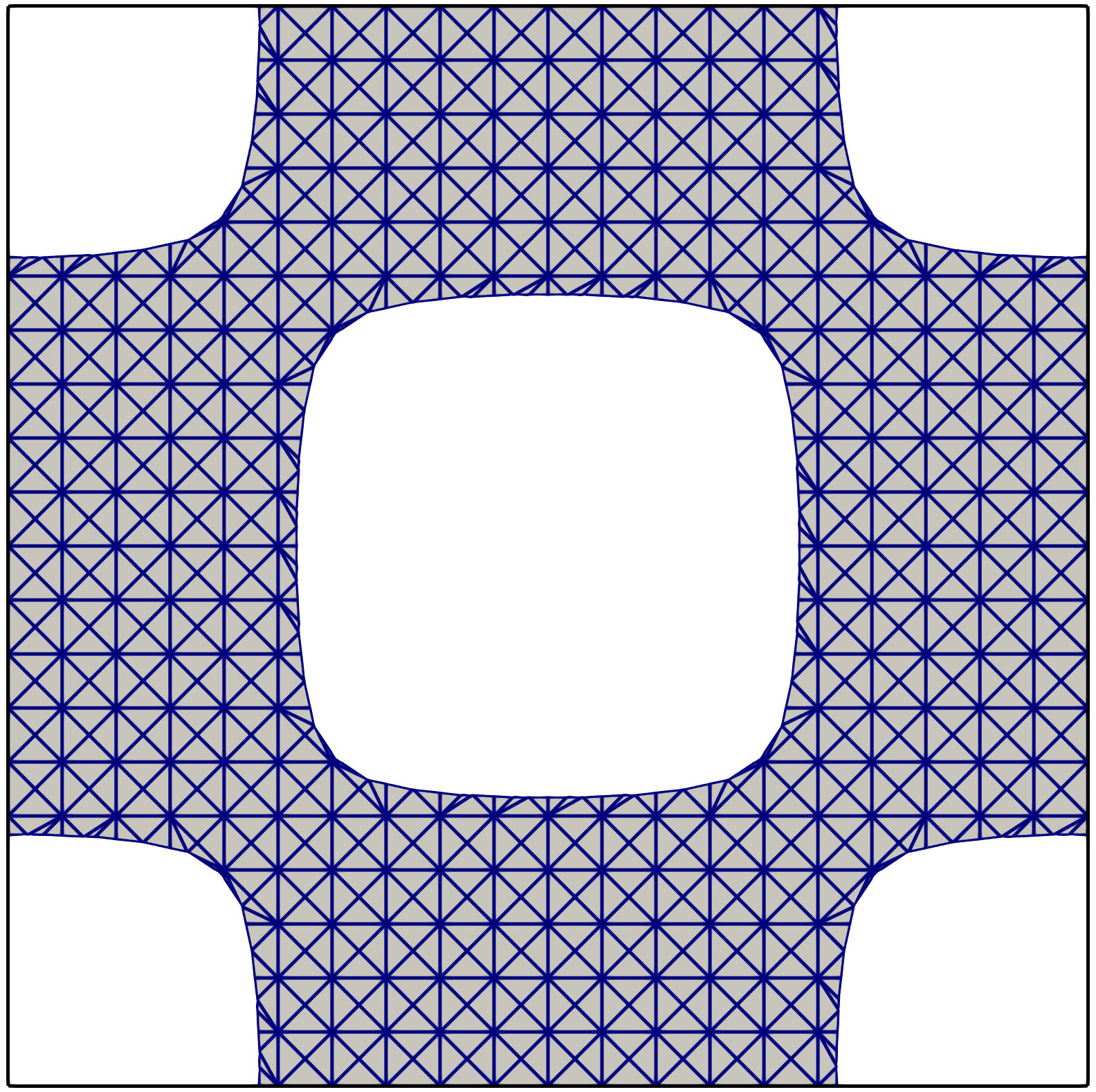}
        \caption{$\cos(2\pi x)\cos(2\pi y)-0.11\leq0$}
        \label{5b}
    \end{subfigure}
    \begin{subfigure}{0.43\textwidth}
        \centering
        \includegraphics[width=\textwidth]{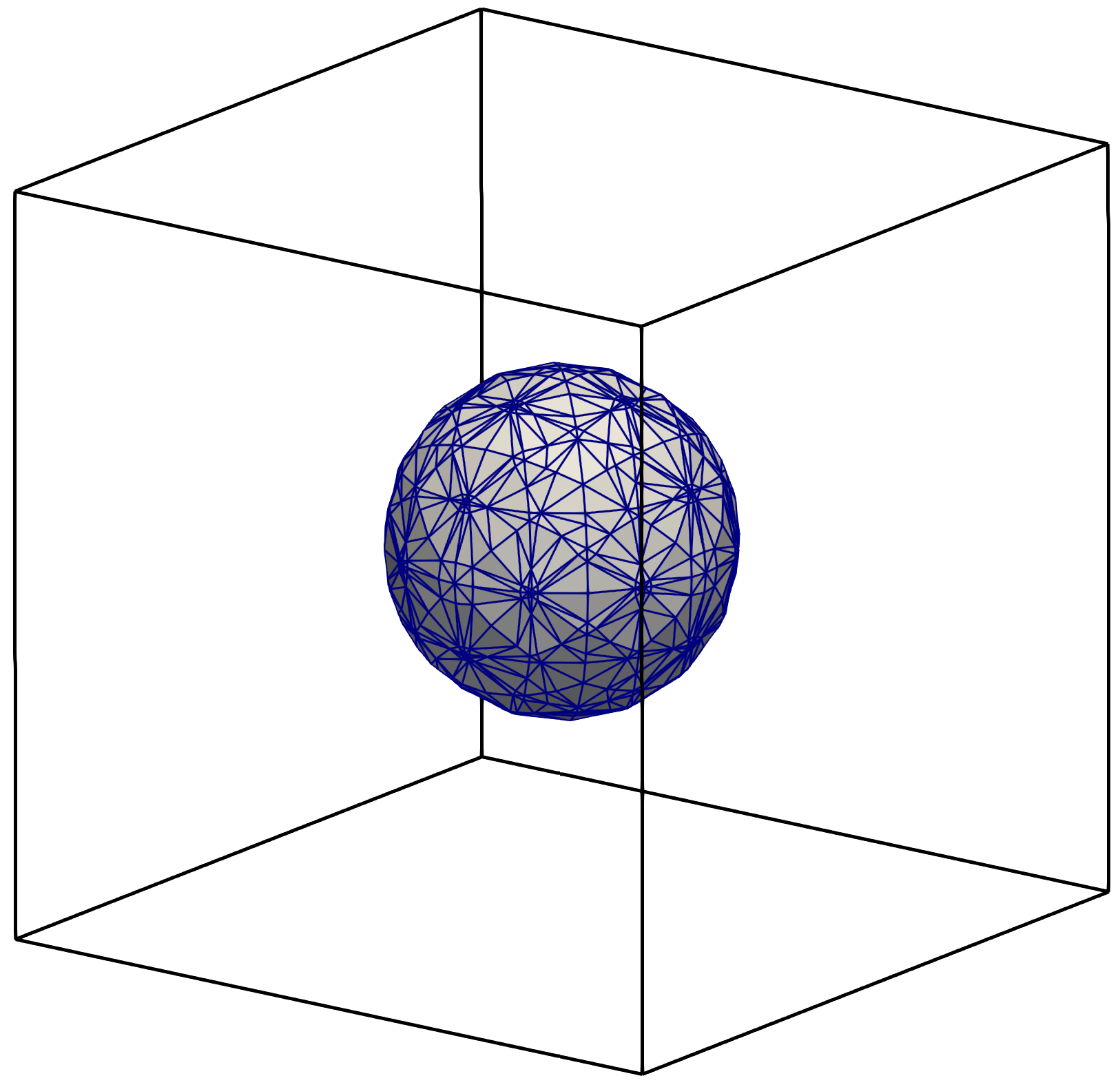}
        \caption{$\sqrt{(x-0.5)^2+(y-0.5)^2+(z-0.5)^2}-0.23\leq0$}
        \label{5c}
    \end{subfigure}
    \hspace{1cm}
    \begin{subfigure}{0.43\textwidth}
        \centering
        \includegraphics[width=\textwidth]{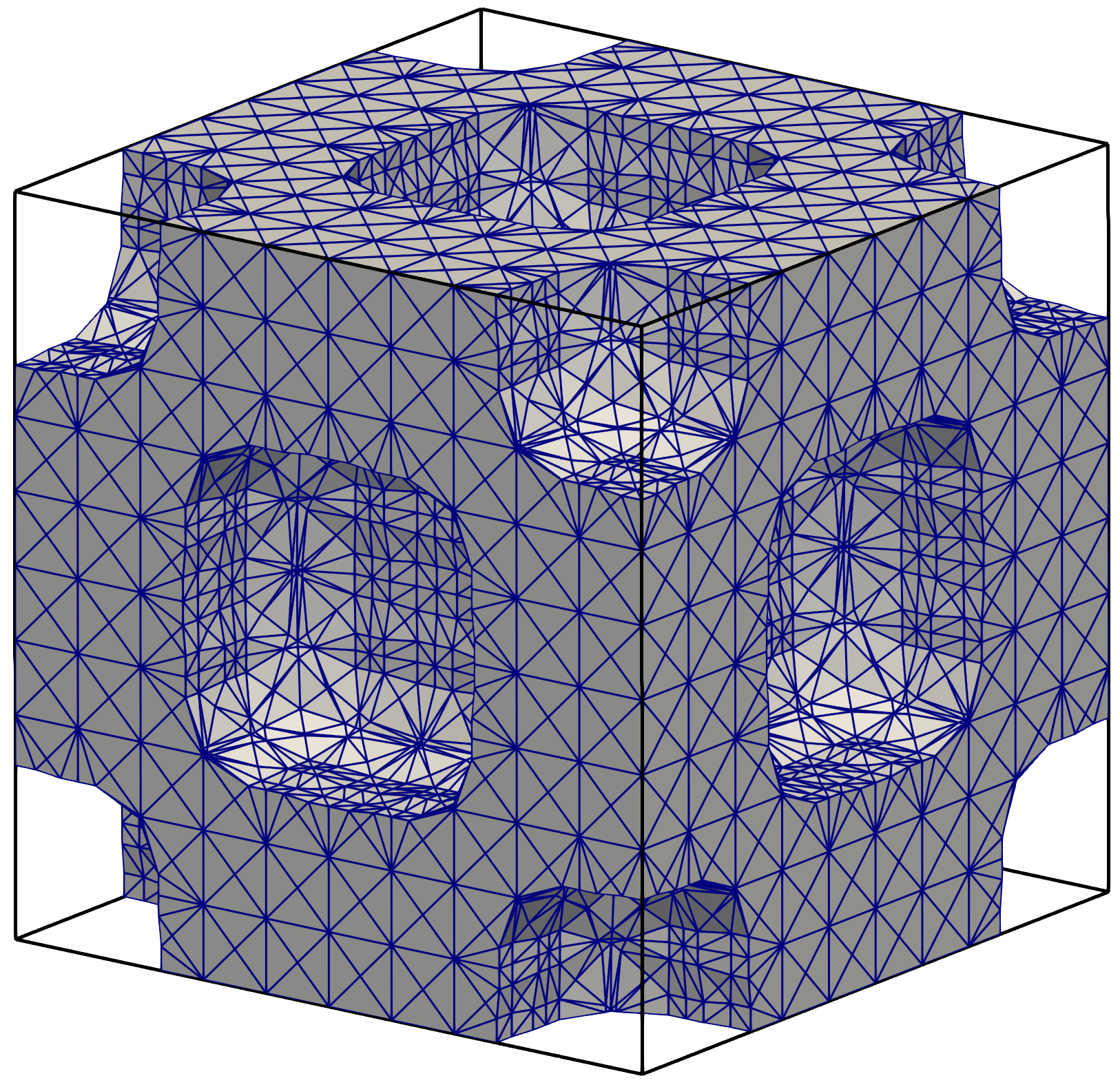}
        \caption{$\cos(2\pi x)\cos(2\pi y)\cos(2\pi z)-0.11\leq0$}
        \label{5d}
    \end{subfigure}
    \caption{A visualisation of the geometries and discretisations used for validation of automatic differentiation}
    \label{fig: fig 5}
\end{figure}
\begin{table}[!b]
\small
\centering
\caption{Verification of directional derivatives resulting from automatic differentiation against the exact expressions from Section \ref{sec: Shape derivatives for unfitted methods} and finite differences. In the above, we take $f(x,y)=\cos(x+y)$ and $g(\boldsymbol{n})=\lVert\boldsymbol{n}-\boldsymbol{n}_g\rVert^2$ with $\boldsymbol{n}_g=\frac{\boldsymbol{\nabla}g}{\rVert\boldsymbol{\nabla}g\lVert}$ and $g(x,y)=x - \frac{1}{10}\sin\left(\frac{\pi y}{3}\right)$. The latter is based on an expression in \cite[Sec. 6.3.,][]{10.1007/s00466-017-1383-6_2017}. Note that we use `N/A' for directional derivatives that cannot readily be calculated analytically.}
\label{tab:verify}
\begin{tabular}{c|c|c|c}
F & Level-set function & $\lVert\mathrm{d}F_{\mathrm{AD}}-\mathrm{d}F_{\mathrm{exact}}\rVert_{\infty}$ & $\lVert\mathrm{d}F_{\mathrm{AD}}-\mathrm{d}F_{\mathrm{FDM}}\rVert_{\infty}$ \\ \hline
\multirow{4}{*}{$\displaystyle\int_{\Omega(\phi)} f~\mathrm{d}\boldsymbol{x}$} & Fig. \ref{5a} & {3.47$\times10^{-17}$} & {4.44$\times10^{-10}$} \\
 & Fig. \ref{5b} & {7.81$\times10^{-18}$} & {2.09$\times10^{-11}$} \\
 & Fig. \ref{5c} & {3.90$\times10^{-18}$} & {4.22$\times10^{-11}$} \\
 & Fig. \ref{5d} & {1.73$\times10^{-18}$} & {7.04$\times10^{-12}$} \\ \hline
\multirow{4}{*}{$\displaystyle\int_{\partial\Omega(\phi)}f~\mathrm{d}s$} & Fig. \ref{5a} & {3.47$\times10^{-16}$} & {3.59$\times10^{-8}$} \\
 & Fig. \ref{5b} & {3.05$\times10^{-16}$} & {1.03$\times10^{-9}$} \\
 & Fig. \ref{5c} & {2.64$\times10^{-16}$} & {9.39$\times10^{-10}$} \\
 & Fig. \ref{5d} & {2.92$\times10^{-16}$} & {1.62$\times10^{-10}$} \\ \hline
\multirow{4}{*}{$\displaystyle\int_{\partial\Omega(\phi)}\boldsymbol{f}\cdot\boldsymbol{n}~\mathrm{d}s$} & Fig. \ref{5a} & $4.30\times10^{-16}$ & $1.52\times10^{-9}$ \\
 & Fig. \ref{5b} & N/A & $6.78\times10^{-11}$ \\
 & Fig. \ref{5c} & $3.12\times10^{-17}$ & $1.44\times10^{-10}$ \\
 & Fig. \ref{5d} & N/A & $1.63\times10^{-11}$ \\ \hline
\multirow{4}{*}{$\displaystyle\int_{\partial\Omega(\phi)} g(\boldsymbol{n})~\mathrm{d}s$} & Fig. \ref{5a} & N/A & $9.35\times10^{-8}$ \\
 & Fig. \ref{5b} & N/A & $2.35\times10^{-9}$ \\
 & Fig. \ref{5c} & N/A & $2.50\times10^{-9}$ \\
 & Fig. \ref{5d} & N/A & $6.09\times10^{-10}$
\end{tabular}
\end{table}

Table \ref{tab:verify} shows the results when comparing the analytic expressions, automatic differentiation, and finite differences. These demonstrate that the derivatives computed using the exact expressions from Section \ref{sec: Shape derivatives for unfitted methods} exactly correspond to the derivatives computed using automatic differentiation. This correspondence is to within machine precision regardless of mesh size because the exact expressions in Section \ref{sec: Shape derivatives for unfitted methods} are derived using a discretise-then-differentiate approach. 

The implementation discussed in Section \ref{sec: ad implem} can also be leveraged for computation of shape Hessians for unfitted discretisations. For example, we computed the Hessian of $F(\phi)=\int_{\partial\Omega(\phi)} \lVert\boldsymbol{n}-\boldsymbol{n}_g\rVert^2~\mathrm{d}s$ using automatic differentiation and finite differences and found that the absolute error was $1.85\times10^{-5}$. This suggests an interesting avenue of mathematical development for the analytic methods proposed in \cite{Berggren_2023}.

\section{Level-set topology optimisation}\label{sec: Level-set topology optimisation}

In the following, we discuss our implementation of level set-based topology optimisation in the framework of unfitted finite elements.

\subsection{Boundary evolution}
In this work, we use the approach proposed by \citet{10.1002/nme.4823_2015} to update the level-set function. Namely, we evolve the level-set function by solving a transport equation
\begin{equation}\label{eqn: transport}
    \begin{cases}
        \displaystyle\pderiv{\phi(t,\boldsymbol{x})}{t}+\boldsymbol{\beta}\cdot\boldsymbol{\nabla}\phi(t,\boldsymbol{x})=0,\\
        \phi(0,\boldsymbol{x})=\phi_0(\boldsymbol{x}),\\
        \boldsymbol{x}\in D,~t\in(0,T),
    \end{cases}
\end{equation}
where $\boldsymbol{\beta}$ is a velocity field. In this work $\boldsymbol{\beta}$ is computed as $\boldsymbol{\beta}=\boldsymbol{n}g_\Omega$,
% \begin{equation}
%     \boldsymbol{\beta}=\boldsymbol{n}g_\Omega.
% \end{equation}
where $g_\Omega$ is the regularised sensitivity resulting from the Hilbertian extension-regularisation approach \cite{10.1016/bs.hna.2020.10.004_978-0-444-64305-6_2021}. This approach projects a directional derivative $\mathrm{d}J(\phi;w)$ of a functional $J(\phi)$, computed using the approaches in Section \ref{sec: Shape derivatives for unfitted methods} and \ref{sec: Automatic differentiation}, onto a Hilbert space $H$ on $D$, typically with additional regularity. This involves solving an identification problem \cite{10.1016/bs.hna.2020.10.004_978-0-444-64305-6_2021}:
\begin{weak}
For Find $g_\Omega\in H$ such that 
\begin{equation}\label{eqn: hilb extension wf}
\langle g_\Omega,w\rangle_H=\mathrm{d}J(\phi;w),~\forall w\in H,
\end{equation}
where $\langle\cdot,\cdot\rangle_H$ is the inner product on $H$.
\end{weak}
In addition to naturally extending the sensitivity from $\partial\Omega$ onto the bounding domain $D$ with $H$-regularity, this approach also ensures that the solution $g_\Omega$ is a descent direction for $J(\Omega)$.

The transport equation in Equation \eqref{eqn: transport} is solved using an interior penalty approach and Crank-Nicolson for the discretisation in time \cite{10.1016/j.cma.2017.09.005_2018,Burman_Elfverson_Hansbo_Larson_Larsson_2017,Burman_Fernández_2009}. The weak formulation of this problem is:
\begin{weak}
For all $t\in(0,T)$, find $\phi\in W^h$ such that
\begin{equation}\label{eqn: evolution wf}
    \int_D\left[v\pderiv{\phi}{t}+v\boldsymbol{\beta}\cdot\boldsymbol{\nabla}\phi\right]~\mathrm{d}\boldsymbol{x}+\sum_{F\in\mathscr{S}_h}\int_Fc_eh_F^2\lvert\boldsymbol{n}_F\cdot\boldsymbol{\beta}\rvert\llbracket \boldsymbol{n}_F\cdot\boldsymbol{\nabla}\phi\rrbracket\llbracket\boldsymbol{n}_F\cdot\boldsymbol{\nabla}v\rrbracket~\mathrm{d}s=0,~\forall v\in W^h,
\end{equation}
where $\mathscr{S}_h$ is the set of interior mesh facets, $h_F$ is the average element diameters of the elements sharing a facet, $\boldsymbol{n}_F$ is the normal to the facet $F$, $\llbracket v\rrbracket = v^+-v^-$ is the jump in a function $v$ over the facet $F$, and $c_e$ is a stabilisation coefficient. 
\end{weak}

We include $\lvert\boldsymbol{n}_F\cdot\boldsymbol{\beta}\rvert$ in the interior penalty term as proposed by \citet{Burman_Fernández_2009}. In the context of topology optimisation, this ensures that non-designable regions with $\boldsymbol{\beta}=\boldsymbol{0}$ are not locally over-stabilised by the interior penalty term. The example in Figure \ref{fig:evolution_fig} shows the evolution of an interface towards a non-designable region highlighted in grey with (green) and without (red) the $\lvert\boldsymbol{n}_F\cdot\boldsymbol{\beta}\rvert$ term. This shows that when the velocity magnitude is not included the interface will encroach into the non-designable domain, and this will continue over the entire evolution process. Note that this term was omitted from the formulation in \cite{10.1016/j.cma.2017.09.005_2018,Burman_Elfverson_Hansbo_Larson_Larsson_2017}, likely because non-designable regions were not considered. 

In our numerical examples, we take the stabilisation coefficient to be $c_e=0.01$.

\begin{figure}[!t]
    \centering
    \begin{subfigure}{0.475\linewidth}
        \centering
        \includegraphics[width=\linewidth]{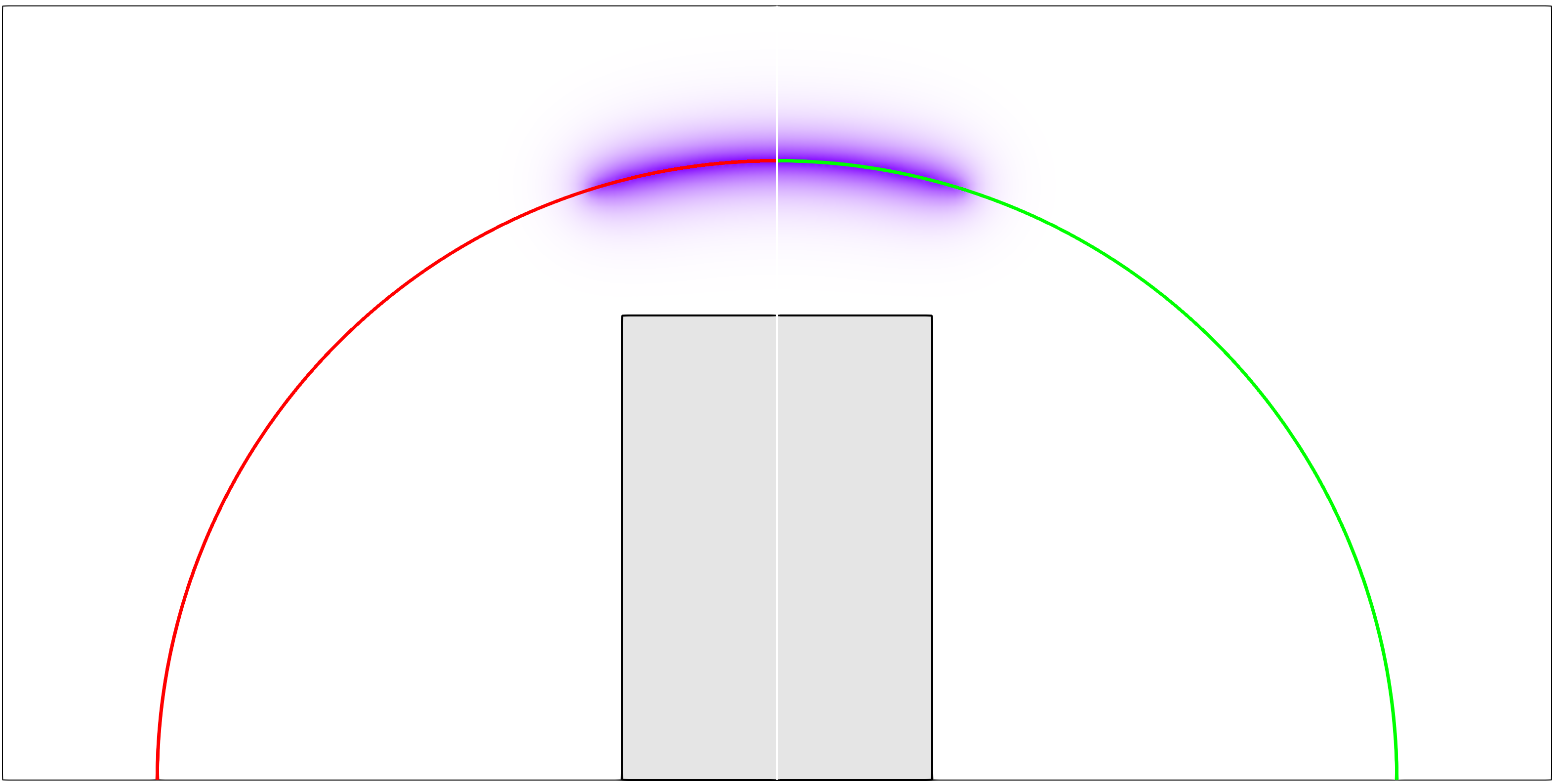}
        \caption{}
    \end{subfigure}
    \begin{subfigure}{0.475\linewidth}
        \centering
        \includegraphics[width=\linewidth]{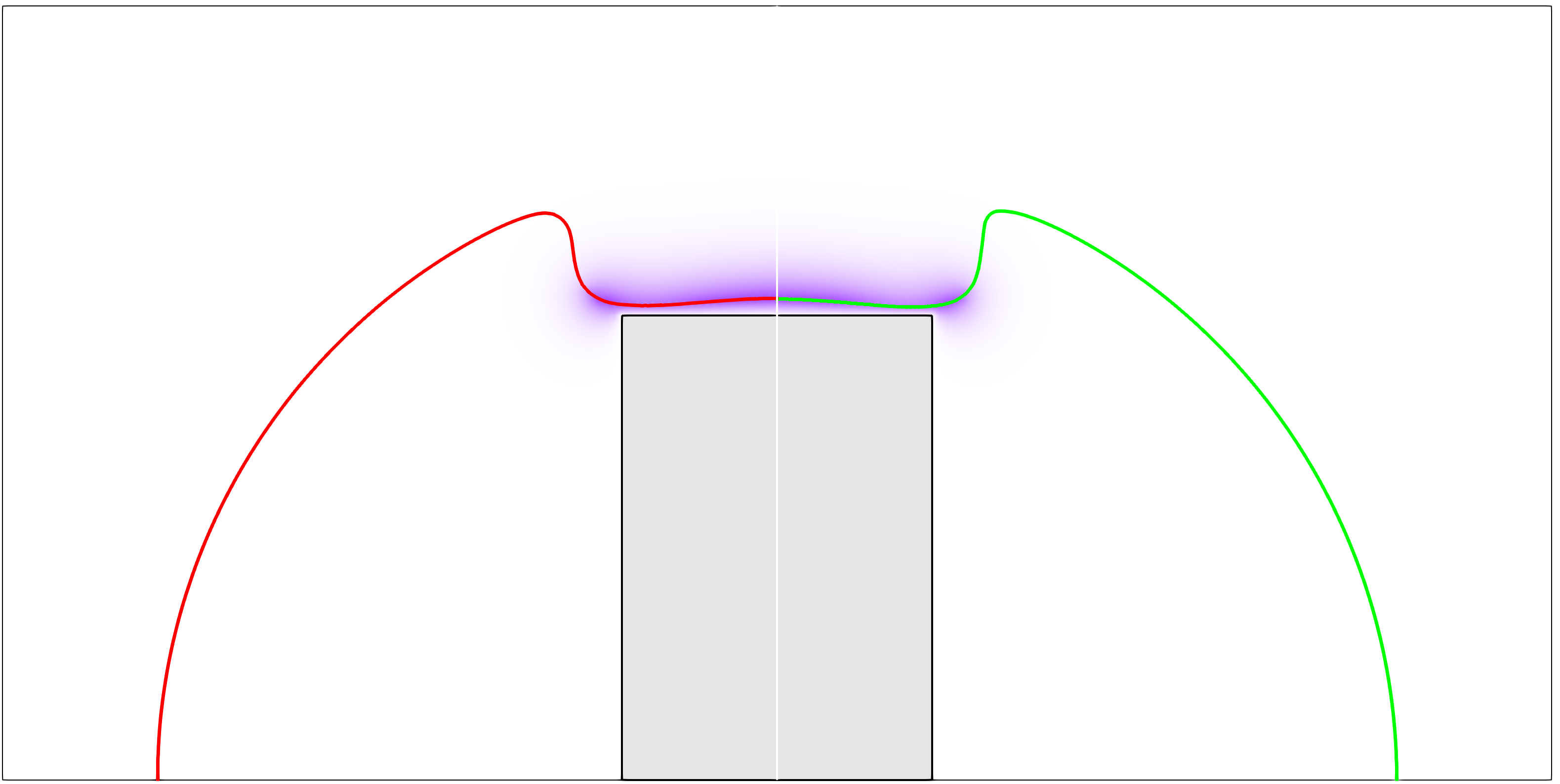}
        \caption{}
    \end{subfigure}
    \begin{subfigure}{0.475\linewidth}
        \centering
        \includegraphics[width=\linewidth]{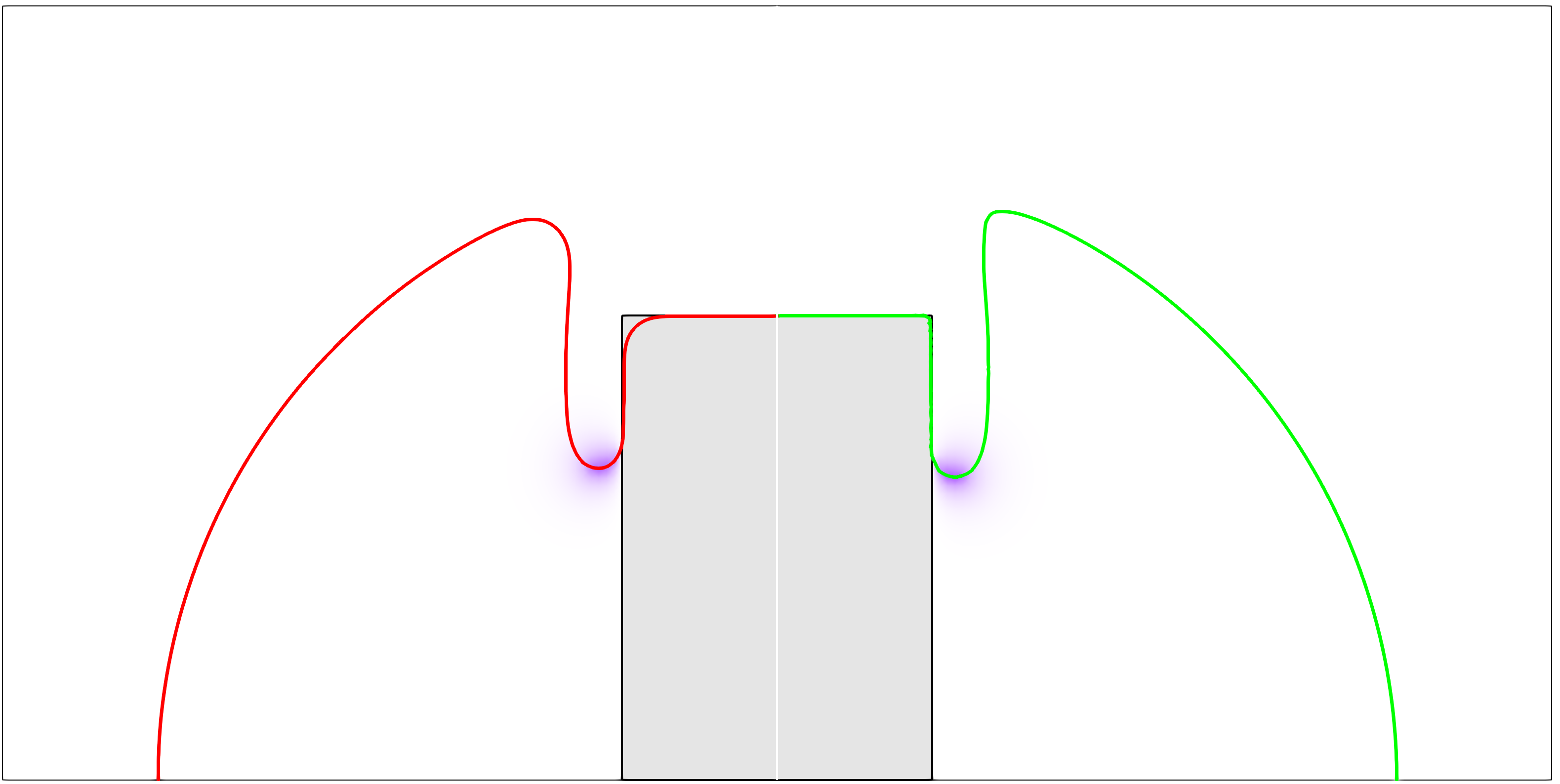}
        \caption{}
    \end{subfigure}
    \begin{subfigure}{0.475\linewidth}
        \centering
        \includegraphics[width=\linewidth]{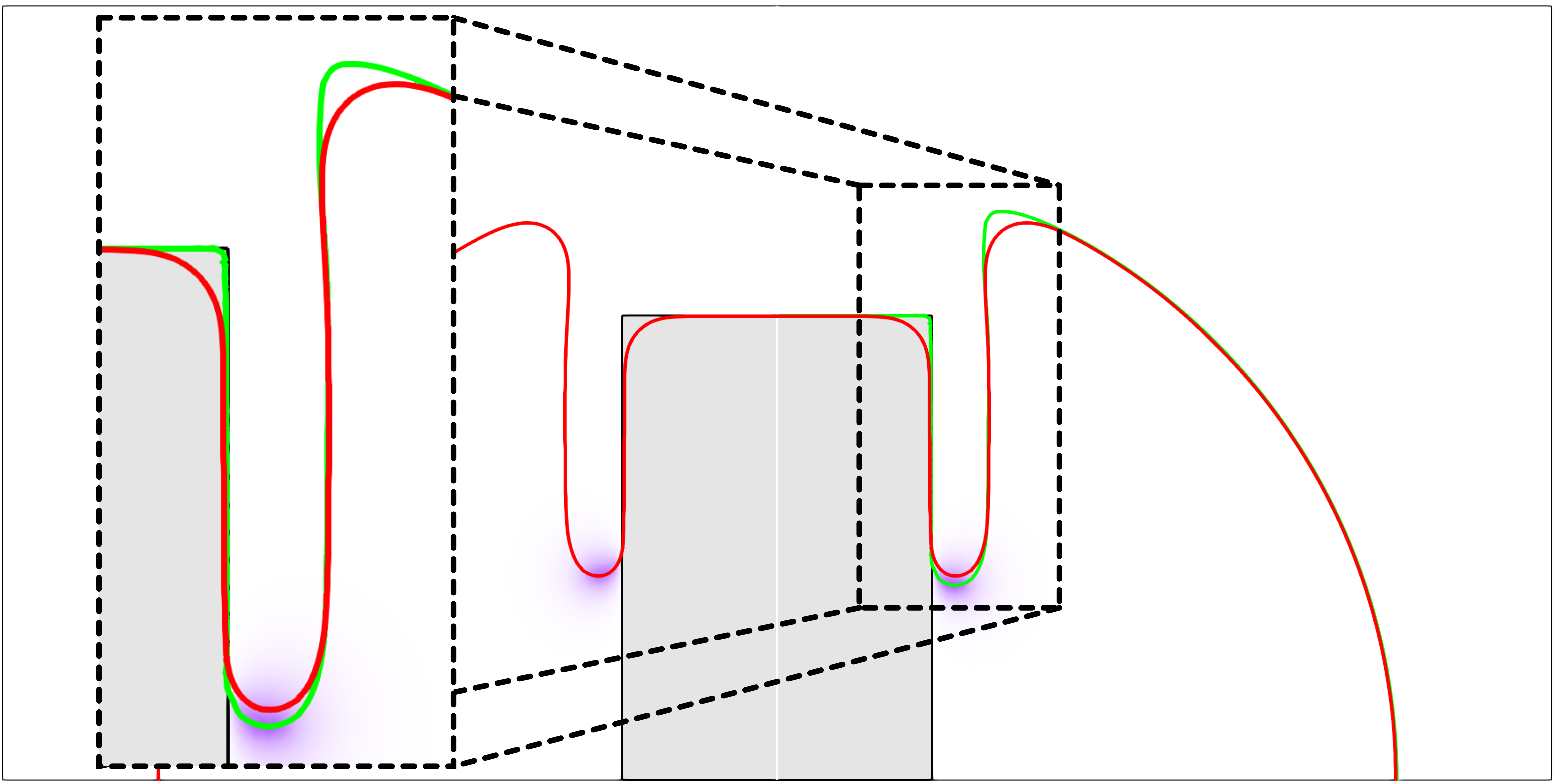}
        \caption{}
    \end{subfigure}
    \caption{The evolution of an interface visualised in Figure (a), (b), (c), and (d) towards a non-designable region highlighted in grey both with (green) and without (red) the $\lvert\boldsymbol{n}_F\cdot\boldsymbol{\beta}\rvert$ term in the evolution equation (Equation \ref{eqn: evolution wf}). In Figure (d) we overlay both curves on the right to show the slippage of the red interface into the non-designable region and the deviation from the green interface, which is particularly visible in the inset. We visualise the velocity field in purple that has been extended onto the background domain.}
    \label{fig:evolution_fig}
\end{figure}

\subsection{Reinitialisation}
Evolution of the boundary described above requires that the level-set function $\phi$ has well-behaved gradients. This is usually achieved by reinitialising $\phi$ to be an approximation of the signed distance function \citep{978-0-387-22746-7_2006} given by
\begin{equation}
\phi(\boldsymbol{x})=\begin{cases}
-d(\boldsymbol{x}, \partial \Omega)&\text{if }\boldsymbol{x} \in \Omega, \\
0&\text{if } \boldsymbol{x} \in \partial \Omega, \\
d(\boldsymbol{x}, \partial \Omega)&\text{if } \boldsymbol{x} \in D \backslash \bar{\Omega},
\end{cases}
\end{equation}
where $d(\boldsymbol{x}, \partial \Omega):=\min _{\boldsymbol{p} \in \partial \Omega}\lvert\boldsymbol{x}-\boldsymbol{p}\rvert$ is the minimum distance from $\boldsymbol{x}$ to the boundary $\partial \Omega$.

To achieve this, we solve the reinitialisation equation \cite{978-0-387-22746-7_2006,10.1006/jcph.1999.6345_1999} given by
\begin{align}\label{eqn: reinit}
\begin{cases}
    \pderiv{\phi}{t}(t,\boldsymbol{x}) + \operatorname{sign}(\phi_0(\boldsymbol{x}))\left(\lvert\boldsymbol{\nabla}\phi(t,\boldsymbol{x})\rvert-1\right) = 0,\\
    \phi(0,\boldsymbol{x})=\phi_0(\boldsymbol{x}),\\
    \boldsymbol{x}\in D, t>0.
\end{cases}
\end{align}
We solve this problem to steady state using an approach based on the one proposed by \citet{mallon2024neurallevelsettopology}. The weak formulation for this problem is given as: 
\begin{weak}
Find $\phi\in W^h$ such that
\begin{equation}
    \int_Dv\boldsymbol{w}\cdot\boldsymbol{\nabla}\phi-v\operatorname{sign}(\phi_0)~\mathrm{d}\boldsymbol{x}+\int_\Gamma \frac{\gamma_d}{h}\phi v~\mathrm{d}s+j(\phi,v)=0,~\forall v\in W^h,
\end{equation}
where $\boldsymbol{w}=\operatorname{sign}(\phi_0)\frac{\boldsymbol{\nabla}\phi}{\lVert\boldsymbol{\nabla}\phi\rVert}$, $\gamma_d$ is the surface penalty coefficient that we set to 20, and $h$ is the element size.
\end{weak}
In the above, the first term comes from multiplying Equation \eqref{eqn: reinit} by a test function and integrating over $D$, while the second term is a surface penalty term to reduce boundary movement under reinitialisation. The final term stabilises the finite element discretisation using an artificial viscosity given by
\begin{equation}\label{eqn: art visc reinit}
j(\phi,v)=\int_Dc_{r,1}h\Vert\boldsymbol{w}\rVert\boldsymbol{\nabla}\phi\cdot\boldsymbol{\nabla}v~\mathrm{d}\boldsymbol{x},
\end{equation}
where $h$ is the element diameter and $c_{r,1}$ is a stabilisation coefficient. In our examples, we take $c_{r,1}=0.5$. The above is then solved using Picard iterations, namely $\boldsymbol{w}$ is considered constant when computing the Jacobian in Newton-Raphson.

In our testing, we found that approximating the $\operatorname{sign}$ function with
\begin{equation}\label{eqn: approx sign}
    \operatorname{sign}(\phi)\approx\frac{\phi}{\sqrt{\phi^2+h^2\lvert\nabla\phi\rvert^2}},
\end{equation}
as proposed in \cite{10.1006/jcph.1999.6345_1999}, provides better convergence. In addition, we also considered replacing the artificial viscosity with a continuous interior penalty term, as this is more consistent with the strong formulation. This is given by
\begin{equation}\label{eqn: interior penalty reinit}
    j(\phi,v)=\sum_{F\in\mathscr{S}_h}\int_Fc_{r,2}h_F^2\llbracket\boldsymbol{\nabla}\phi\rrbracket\cdot\llbracket\boldsymbol{\nabla}v\rrbracket~\mathrm{d}s,
\end{equation}
where $c_{r,2}$ is a different stabilisation coefficient. We generally found that this provides better results than an artificial viscosity term, but requires that the initial level-set function $\phi_0$ is closer to a signed distance function for convergence. As such, for the results presented below, we solve the reinitialisation problem using the artificial viscosity term given by Equation \eqref{eqn: art visc reinit}. In future work, further exploration of these approaches will be considered.

\subsection{Isolated volume tagging}\label{sec: isolated vol tagging}
Isolated volumes of a domain $\Omega\subset D$ are regions of that domain within which solution fields are not sufficiently constrained (i.e., disconnected from all boundaries that have Dirichlet boundary conditions). In conventional level-set methods, where the void phase $D\setminus\Omega$ is filled with a weak ersatz material \cite[e.g.,][]{10.1016/j.jcp.2003.09.032_2004,10.1016/bs.hna.2020.10.004_978-0-444-64305-6_2021}, this situation is naturally remedied because the isolated volumes are connected by ersatz material and therefore fields are well constrained. Since we do not introduce such an approximation, the differential operator over disconnected volumes has a non-trivial kernel. As a result, the matrices resulting from a discretisation (e.g., finite element method) are singular. 

To remedy this situation, \citet{10.1016/j.cma.2017.03.007_2017} proposed constructing an indicator function by solving a thermal convection problem on $\Omega$ with heat dissipating towards a value of zero in regions connected to Dirichlet boundaries and accumulating to a desired value in isolated regions. A smoothed Heaviside function was then used to construct a binary indicator. However, in our tests, we found that this method fails to distinguish between volumes that are separated by a single cell layer. In these cases, the degrees of freedom within the two volumes are ``connected" through the ghost penalty term required by the CutFEM formulation. 

In this work, we propose a novel graph-based approach to detect isolated volumes as described below. This method relies on the creation of a conforming connectivity graph for the cut mesh, followed by a graph-colouring algorithm. We then mark all regions that are connected to the relevant Dirichlet boundaries. The negation of this map then identifies the isolated volumes. 

Once isolated volumes are identified, we follow the approach in \cite{10.1016/j.cma.2017.03.007_2017}. Namely, for a generic weak formulation $a(u,v)=l(v)$ over $\Omega$ with a non-trivial kernel owing to disconnected volumes marked by $\psi$, we add a penalty term
\begin{equation}
    k(u,v)=\int_{\Omega}\psi uv~\mathrm{d}\boldsymbol{x}.
\end{equation}
This constrains the average of $u$ to be zero in the regions marked by $\psi$. Concrete examples of this will be given in Section \ref{sec: Examples}.

\subsubsection{Generating the graph}

The first step is to construct a connectivity graph for the cut mesh, where each vertex corresponds to a cell within the mesh. The graph must be conforming, in the sense that two vertices are connected if and only if their associated cells share a face. 

Conforming meshes are generally not produced by typical cutting algorithms such as Delaunay or marching cubes, where cells are split locally without ensuring global mesh conformity. In two-dimensional settings, the meshes produced will always be conforming because two cells that share a cut edge will always split that edge in the same way. However, in three dimensions two cells sharing a cut face might generate non-matching sub-faces on the shared boundary (see Figure \ref{fig:delauney_nonconforming_split}). Generating a conforming mesh composed solely of tetrahedra and hexahedra for a complex cut geometry, particularly in higher dimensions, is generally challenging. This challenge motivates the development and use of embedded boundary methods, which do not require a conforming mesh. Therefore, the sub-triangulated cut mesh typically used for numerical integration cannot be directly employed to generate this conforming connectivity graph. 

Instead, we generate an auxiliary cut mesh, where cut cells are split into general polytopes or polyhedra. The resulting mesh is conforming, regardless of dimension. To create polytopal cuts, we adopt the framework proposed in \cite{badia_martorell_2022} and adapt the algorithms and their implementation in STLCutters.jl \cite{Martorell2025} for the case where the nodal values of the level set functions determine cuts. The result is the required conforming graph $G$, see Figure \ref{fig:polytopal_circle}. 

\begin{figure}
    \centering
    \begin{subfigure}{0.3\linewidth}
        \centering
        \includegraphics[height=0.9\linewidth]{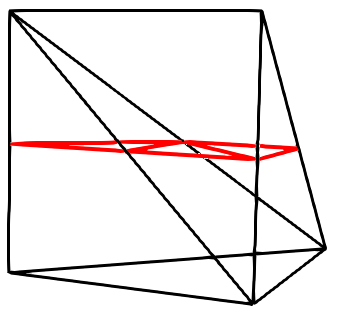}
        \caption{}
    \end{subfigure}
    \hspace{4em}
    \begin{subfigure}{0.3\linewidth}
        \centering
        \includegraphics[height=0.9\linewidth]{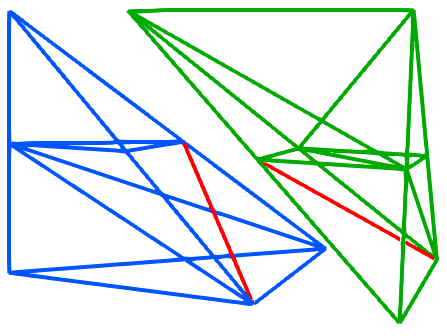}
        \caption{}
    \end{subfigure}
    \caption{Non-conforming cut meshes generated by Delaunay triangulation in 3D. Figure (a) shows the original mesh in black, given by two tetrahedra. The mesh is split through a plane, given by the red lines. Figure (b) shows the result of applying a Delaunay triangulation to both tetrahedra. The shared quadrilateral face is triangulated differently in each tetrahedra, creating a non-conforming edge (in red).}
    \label{fig:delauney_nonconforming_split}
\end{figure}

\begin{figure}
    \centering
    \includegraphics[width=0.5\linewidth]{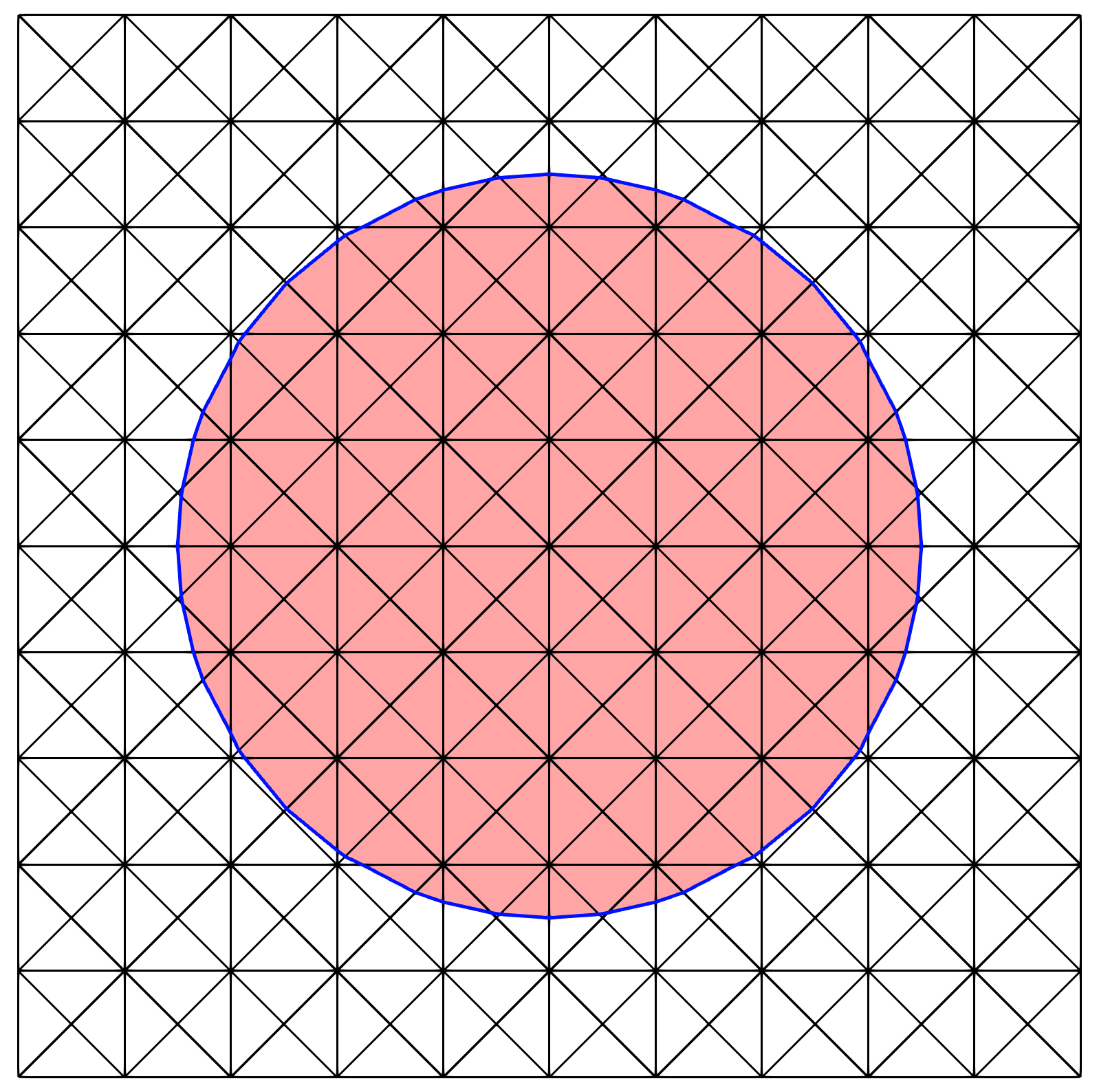}
    \caption{Polytopal cut mesh for the example in Figure \ref{fig:fig 3}. Note that, compared to the subtriangulated cut mesh, this mesh deals with cut cells using polytopes with an arbitrary number of faces.}
    \label{fig:polytopal_circle}
\end{figure}

\subsubsection{Colouring the graph}

The next step involves creating a colouring for the graph $G$, where each colour corresponds to a distinct disconnected volume. This is done by successfully applying a Breadth-First-Search (BFS) algorithm to colour connected parts of the graph that share the same IN/OUT state. Algorithms \ref{alg:color_graph} and \ref{alg:color_volume} detail the procedures used to colour the whole graph and each individual volume, respectively.

In the context of distributed meshes, each processor $p$ only stores a local portion of the graph, denoted $G_p$. Colouring each local graph partition $G_p$ locally (independently) is insufficient to reliably identify the global isolated volumes. Specifically, a single volume might span multiple processor domains but only be adjacent to constraining boundaries (e.g., Dirichlet boundaries) within a subset of these domains. In the worst-case scenario, a volume could theoretically span all processor domains within the communicator, while the relevant constraining boundaries reside in only a single processor's domain (see Figure \ref{fig:polytopal_snake}). This information has to be propagated to all processors that share the volume, which is inevitably a global operation. 

A global volume colouring algorithm is therefore required. This is done in two steps: First, we use Algorithm \ref{alg:color_graph} in each processor $p$ to create a local colouring of $G_p$. For each local volume, we then locally collect a) all neighbouring processors sharing this volume and b) the local colour ID of the volume within the neighbouring processor. This can be done through (relatively) inexpensive neighbour-to-neighbour communications, typical within an FEM context. Finally, this local information is gathered within a single processor that uses it to create a global numbering. The global colouring is then scattered back to all processors. 

\begin{figure}
    \centering
    \begin{subfigure}{0.4\textwidth}
        \centering
        \includegraphics[width=\linewidth]{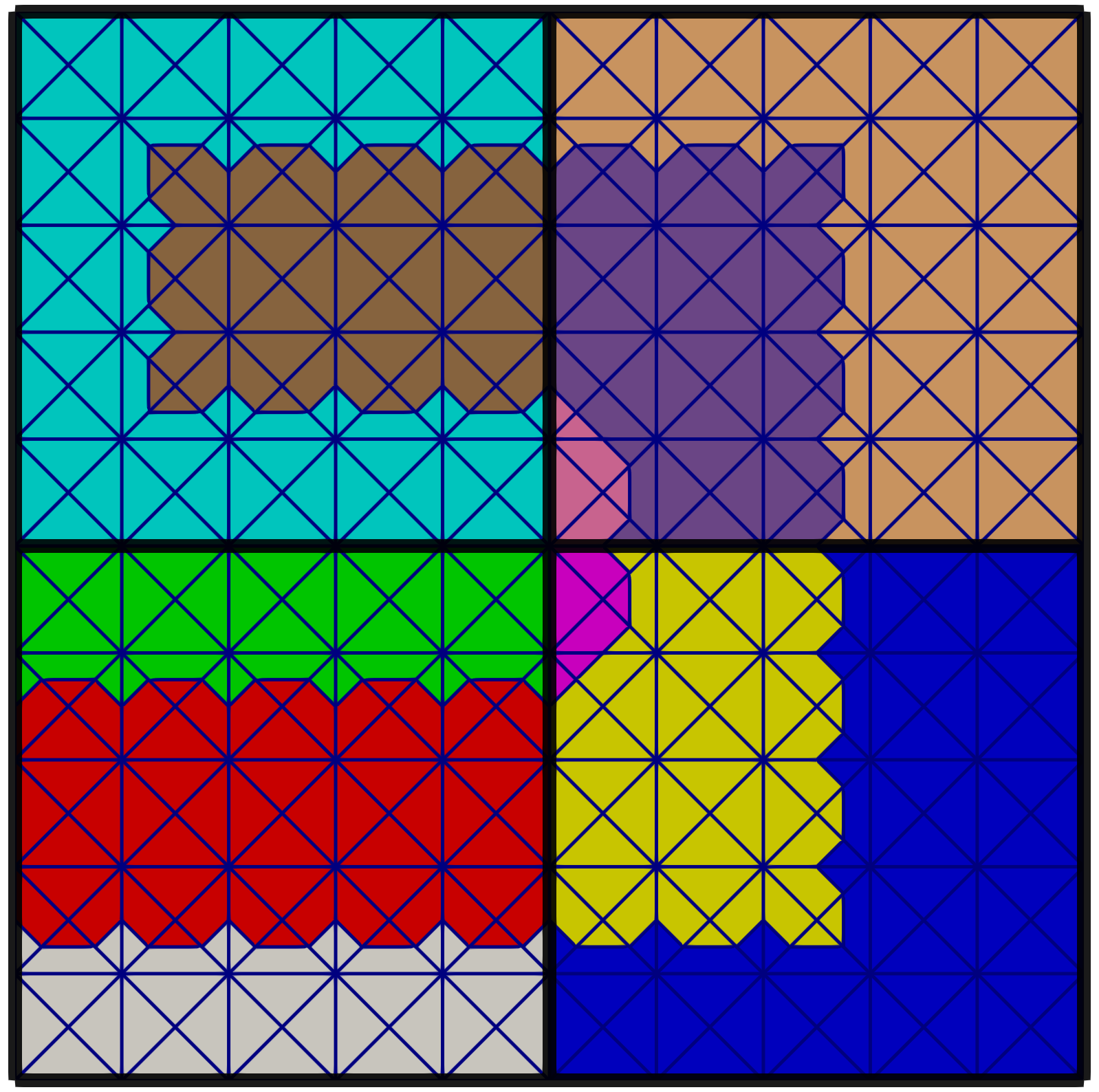}
        \caption{}
    \end{subfigure}
    \begin{subfigure}{0.4\textwidth}
        \centering
        \includegraphics[width=\linewidth]{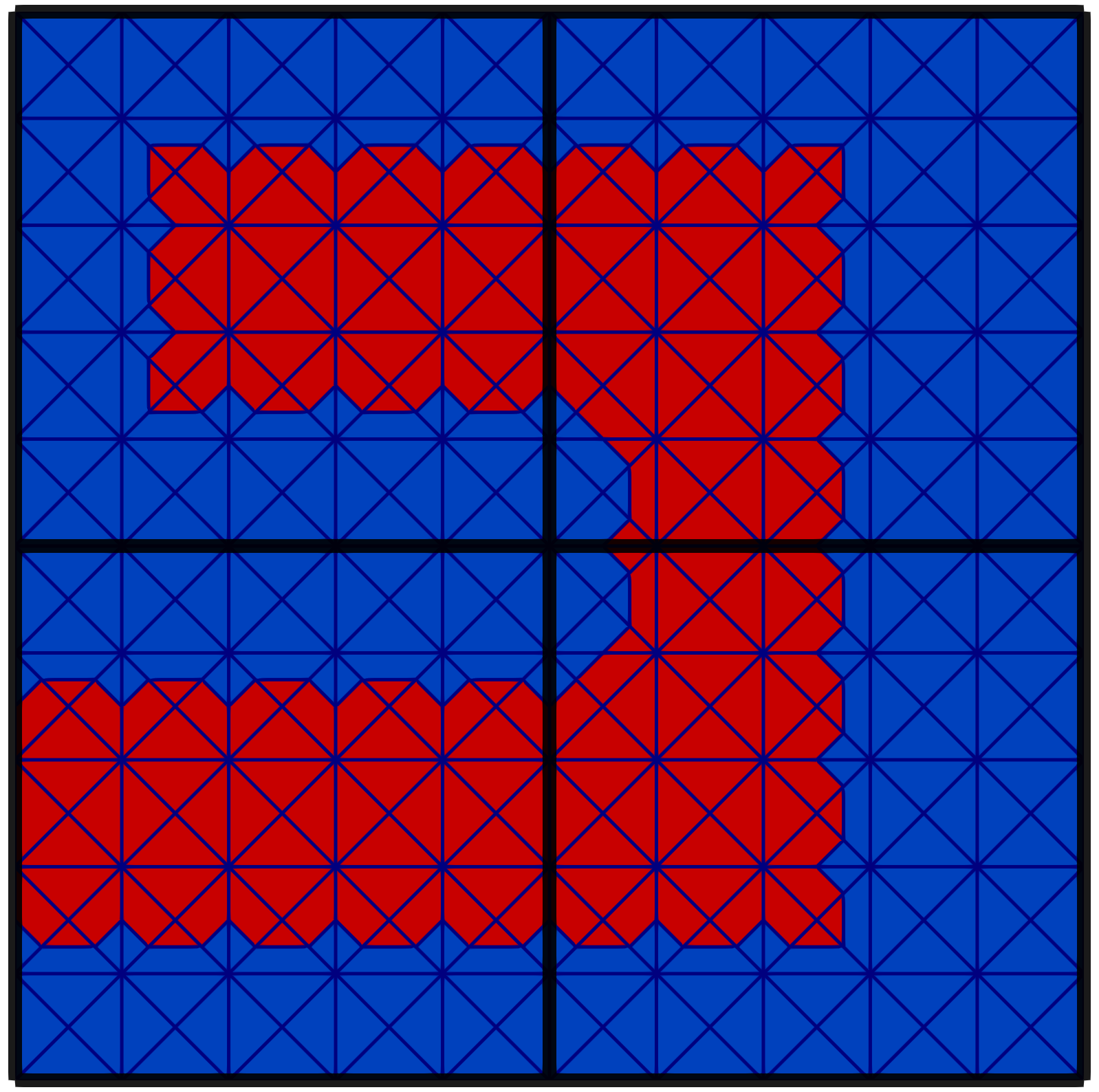}
        \caption{}
    \end{subfigure}
    \caption{Colouring of a ``snake" level set within a distributed background mesh. The background mesh is split between four processors. Processor boundaries are shown with black lines. The level set creates a single IN volume that loops around the whole background mesh while only being constrained by boundary conditions touching the boundary of the background mesh. Figure (a) shows the local colourings produced by each processor independently. Figure (b) shows the global colouring.}
    \label{fig:polytopal_snake}
\end{figure}

\begin{algorithm}
\caption{\textsc{ColourGraph}(G,S)}
\label{alg:color_graph}
\begin{algorithmic}[1]
    \Input
        \State $G$: Adjacency graph: $neighbours(v) = G[v]$
        \State $S$: Array of vertex IN/OUT states: $state(v) = S[v]$
    \Output
        \State $C$: Final colour assignment for each cell, where each colour identifies a connected component within a group.
        \State $S_c$: Mapping from final colour index to its state.\\

    \State Initialise $C$ as an empty vector of size $\text{num\_vertices}(G)$
    \State Initialise $S_c$ as an empty list
    \State $nc \leftarrow 0$
    \For{$v$ in $G$}
        \If{$v$ not coloured} \Comment{New volume found}
            \State $nc \leftarrow nc + 1$
            \State Append!$(S_c, state(v))$
            \State Call \textsc{ColourVolume}$(v, nc, G, S, C)$ \Comment{Propagate colour through volume.}
        \EndIf
    \EndFor
    \State \Return $C, S_c$
\end{algorithmic}
\end{algorithm}

\begin{algorithm}
\caption{\textsc{colourVolume}($v_0, c, G, S, C$)}
\label{alg:color_volume}
\begin{algorithmic}[1]
    \Input
        \State $v_0$: Starting vertex
        \State $c$: Colour value to assign to this volume
        \State $G$: Adjacency graph: $neighbours(v) = G[v]$
        \State $S$: Array of vertex IN/OUT states: $state(v) = S[v]$
        \State $C$: Array to vertex colours (output)
    \Output 
    \State Modifies $C$ in place, stops when all connected vertices to $v_0$ that share the same state have been coloured.\\
    \State Initialise empty queue $Q$
    \State Enqueue $v_0$ into $Q$
    \While{$Q$ is not empty}
        \State $v \leftarrow \text{Dequeue}(Q)$
        \State $C[v] \leftarrow c$ \Comment{Colour the vertex}
        \ForAll{$u$ in $neighbours(v)$} \Comment{Enqueue neighbours}
            \If{$u$ not coloured and $state(v) == state(u)$}
                \State Enqueue $u$ into $Q$
            \EndIf
        \EndFor
    \EndWhile
\end{algorithmic}
\end{algorithm}

\begin{algorithm}
\caption{\textsc{ColourDistributedGraph}$(G,S_p)$}
\label{alg:color_distributed_graph}
\begin{algorithmic}[1]
    \Input
        \State $G = \{G_p,g_G\}$: Global adjacency graph
        \State $S_p$: Local arrays of vertex IN/OUT states
    \Output
        \State $C = \{C_p,g_C\}$: Global colouring for each cell
        \State $S_c$: Mapping from each local colour to its state.\\

    \State $C_p, S_c \leftarrow  \textsc{ColourGraph}(G_p,S_p)$
    \State $C \leftarrow \textsc{GlobalColouring}(G,C_p)$
    \State \Return $C, S_c$
\end{algorithmic}
\end{algorithm}

\section{Examples}\label{sec: Examples}

In this section, we consider two examples that make use of the automatic differentiation techniques and the isolated volume algorithm discussed above. The first example is topology optimisation of a three-dimensional linear elastic wheel and the second is topology optimisation of the elastic part in a fluid-structure interaction problem.

\subsection{Elastic wheel}\label{sec: elastic wheel example}

\subsubsection{Formulation}
In this section, we consider the minimum compliance optimisation of a three-dimensional linear elastic wheel. 

The formulation of this problem is similar to the two-dimensional wheel considered in \cite{10.1016/j.cma.2017.09.005_2018}. Figure \ref{fig:wheel background} shows the background domain $D$ and the boundary conditions for this problem. We apply a homogeneous Dirichlet boundary condition to the outer shell and a non-homogeneous Neumann boundary condition $\boldsymbol{g}(x,y,z)=100(-y,x,0)$ to the inner shell.
\begin{figure}[t]
    \centering
    \begin{subfigure}{0.45\textwidth}
        \centering
        \def\svgwidth{1\textwidth}
        {%\large
        %% Creator: Inkscape 1.2 (dc2aedaf03, 2022-05-15), www.inkscape.org
%% PDF/EPS/PS + LaTeX output extension by Johan Engelen, 2010
%% Accompanies image file '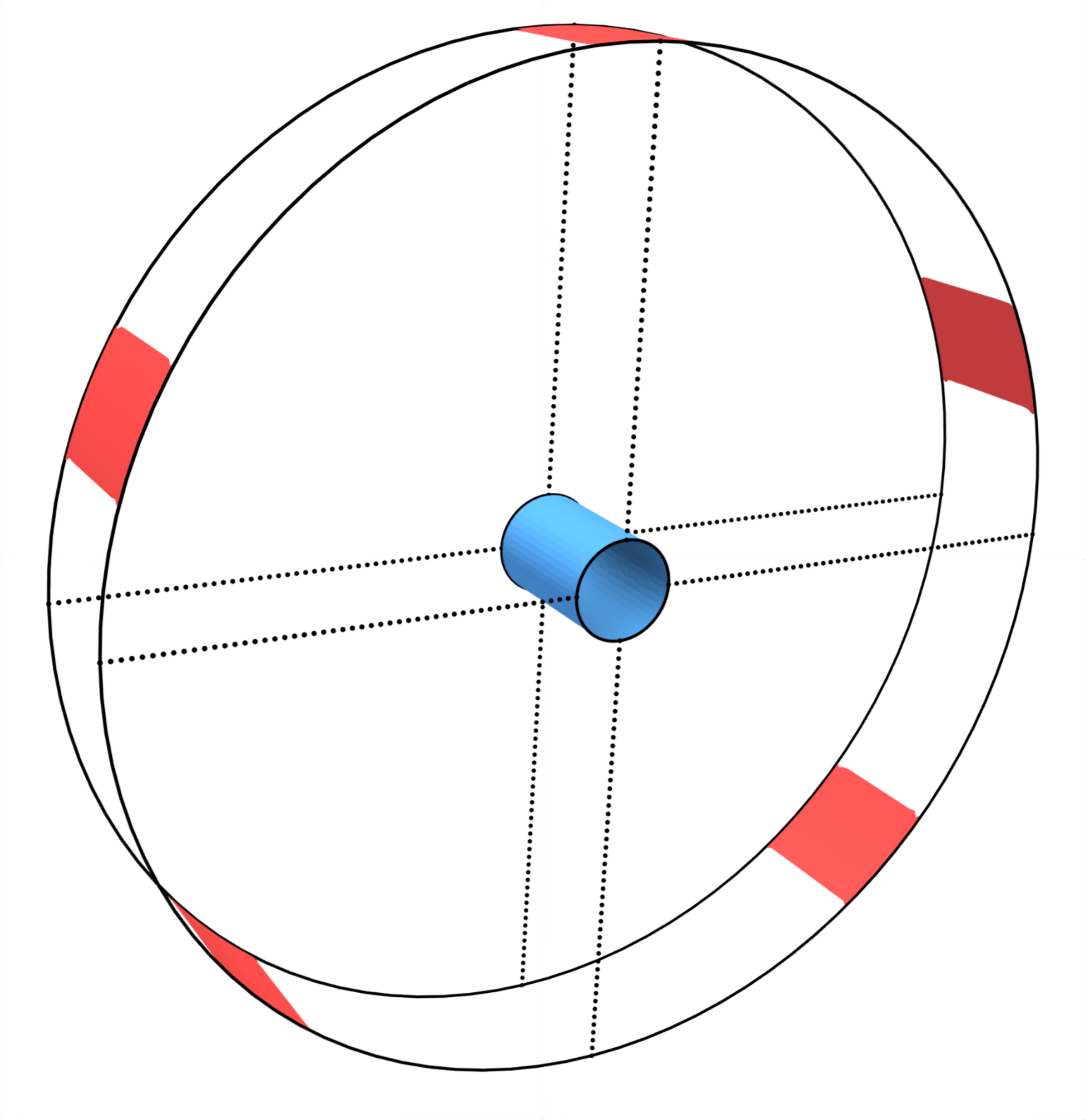' (pdf, eps, ps)
%%
%% To include the image in your LaTeX document, write
%%   \input{<filename>.pdf_tex}
%%  instead of
%%   \includegraphics{<filename>.pdf}
%% To scale the image, write
%%   \def\svgwidth{<desired width>}
%%   \input{<filename>.pdf_tex}
%%  instead of
%%   \includegraphics[width=<desired width>]{<filename>.pdf}
%%
%% Images with a different path to the parent latex file can
%% be accessed with the `import' package (which may need to be
%% installed) using
%%   \usepackage{import}
%% in the preamble, and then including the image with
%%   \import{<path to file>}{<filename>.pdf_tex}
%% Alternatively, one can specify
%%   \graphicspath{{<path to file>/}}
%% 
%% For more information, please see info/svg-inkscape on CTAN:
%%   http://tug.ctan.org/tex-archive/info/svg-inkscape
%%
\begingroup%
  \makeatletter%
  \providecommand\color[2][]{%
    \errmessage{(Inkscape) Color is used for the text in Inkscape, but the package 'color.sty' is not loaded}%
    \renewcommand\color[2][]{}%
  }%
  \providecommand\transparent[1]{%
    \errmessage{(Inkscape) Transparency is used (non-zero) for the text in Inkscape, but the package 'transparent.sty' is not loaded}%
    \renewcommand\transparent[1]{}%
  }%
  \providecommand\rotatebox[2]{#2}%
  \newcommand*\fsize{\dimexpr\f@size pt\relax}%
  \newcommand*\lineheight[1]{\fontsize{\fsize}{#1\fsize}\selectfont}%
  \ifx\svgwidth\undefined%
    \setlength{\unitlength}{1155.00002884bp}%
    \ifx\svgscale\undefined%
      \relax%
    \else%
      \setlength{\unitlength}{\unitlength * \real{\svgscale}}%
    \fi%
  \else%
    \setlength{\unitlength}{\svgwidth}%
  \fi%
  \global\let\svgwidth\undefined%
  \global\let\svgscale\undefined%
  \makeatother%
  \begin{picture}(1,1.03116879)%
    \lineheight{1}%
    \setlength\tabcolsep{0pt}%
    \put(0,0){\includegraphics[width=\unitlength,page=1]{Figure10a-pdf.pdf}}%
    \put(0.6127094,0.42060323){\color[rgb]{0,0,0}\makebox(0,0)[lt]{\lineheight{1.25}\smash{\begin{tabular}[t]{l}$\Gamma_N$\end{tabular}}}}%
    \put(0.25413314,0.3104795){\color[rgb]{0,0,0}\makebox(0,0)[lt]{\lineheight{1.25}\smash{\begin{tabular}[t]{l}$D$\end{tabular}}}}%
    \put(0.8,0.31619346){\color[rgb]{0,0,0}\makebox(0,0)[lt]{\lineheight{1.25}\smash{\begin{tabular}[t]{l}$\Gamma_D$\end{tabular}}}}%
  \end{picture}%
\endgroup%
}
        \caption{}
        \label{fig:wheel background}
    \end{subfigure}
    \begin{subfigure}{0.45\textwidth}
        \centering
        \includegraphics[width=1\linewidth]{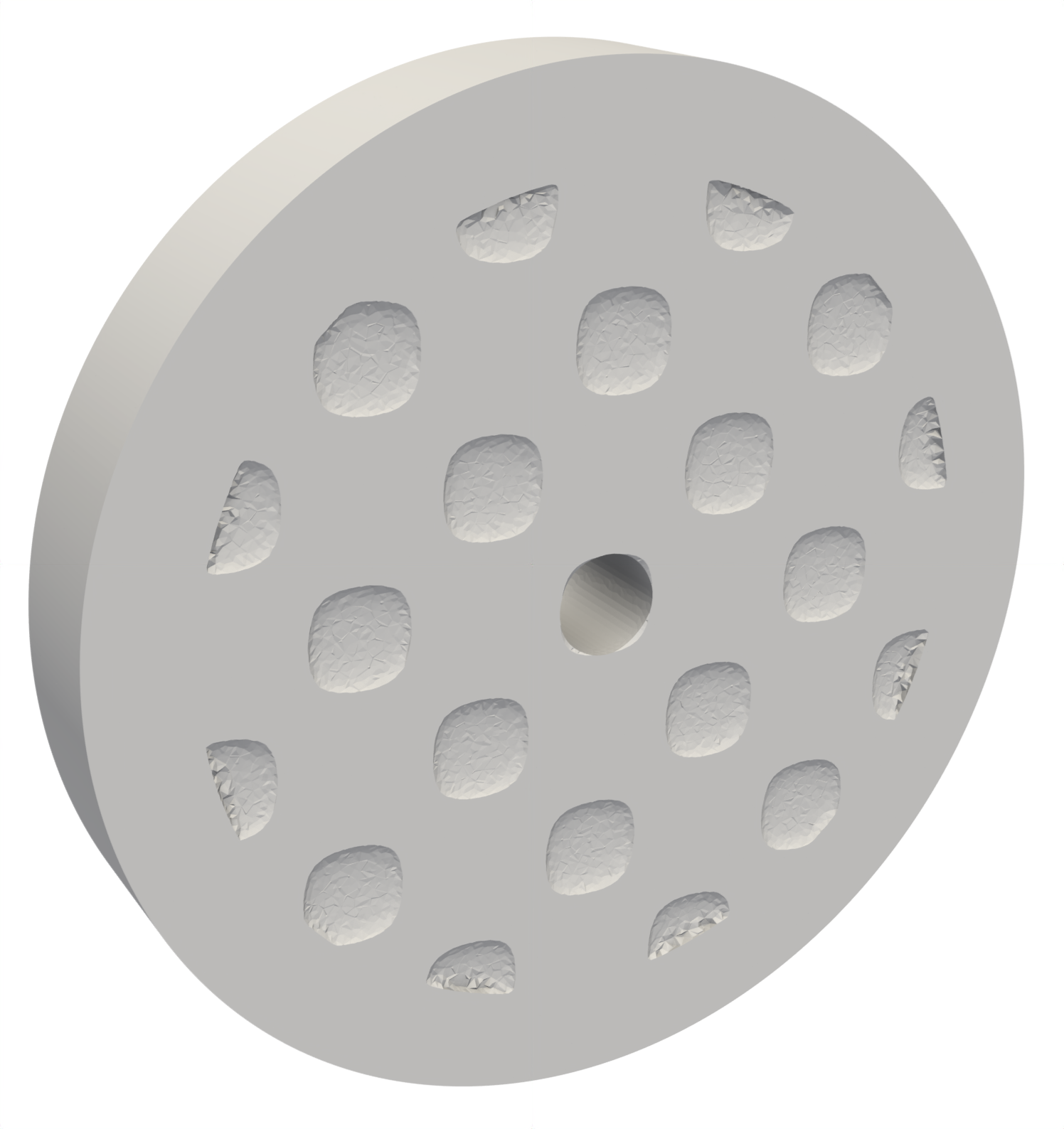}
        \caption{}
        \label{fig:wheel initial}
    \end{subfigure}
    \caption{A visualisation of the setup for the minimum compliance optimisation of a three-dimensional linear elastic wheel. Figure (a) shows the background domain $D$ and boundary conditions. The red boundary denotes a homogeneous Dirichlet boundary condition on $\Gamma_D$, while the blue boundary denotes a non-homogeneous Neumann boundary condition $\boldsymbol{g}(x,y,z)=100(-y,x,0)$ on $\Gamma_N$. Figure (b) shows the initial domain $\Omega(\phi_0)$. The inner and outer cylinders have a radii of 0.1 and 1, respectively, and the thickness of the wheel is 0.3.}
    \label{fig:wheel setup}
\end{figure}
The strong formulation of this problem is given by
\begin{equation}\label{eqn: strong form elast}
    \begin{cases}
        -\boldsymbol{\nabla}\cdot\boldsymbol{\sigma}(\boldsymbol{d})=\boldsymbol{0}&\text{in }\Omega(\phi),\\
        \boldsymbol{\sigma}(\boldsymbol{d})\cdot\boldsymbol{n}=\boldsymbol{0}&\text{on }\Gamma(\phi),\\
        \boldsymbol{\sigma}(\boldsymbol{d})\cdot\boldsymbol{n}=\boldsymbol{g}&\text{on }\Gamma_N,\\
        \boldsymbol{d}=\boldsymbol{0}&\text{on }\Gamma_D,\\
        % \boldsymbol{d}=\boldsymbol{u_0}&\text{on }\Gamma_N,
    \end{cases}
\end{equation}
where $\boldsymbol{\sigma}(\boldsymbol{d})=\lambda\operatorname{tr}(\boldsymbol{\varepsilon}(\boldsymbol{d}))\boldsymbol{I}+2\mu\boldsymbol{\varepsilon}(\boldsymbol{d})$ is the stress tensor, $\boldsymbol{\varepsilon}(\boldsymbol{d})=\frac{1}{2}\left(\boldsymbol{\nabla d}+(\boldsymbol{\nabla d})^\intercal\right)$ is the strain tensor, and $\lambda=0.5769$ and $\mu=0.3846$ are the Lam\'e constants of the material. To avoid ill-conditioning of the system resulting from the finite element discretisation of Equation \eqref{eqn: strong form elast}, we use CutFEM as described in \cite{10.1016/j.cma.2017.09.005_2018} and references therein. In particular, the weak formulation of this problem is: 
\begin{weak}
For $\Omega(\phi)\subset D$, find $\boldsymbol{d}\in R$ such that
\begin{equation}\label{eqn: elast weak form}
A(\boldsymbol{d},\boldsymbol{s})=a(\boldsymbol{d},\boldsymbol{s})+k_\psi(\boldsymbol{d},\boldsymbol{s})+j(\boldsymbol{d},\boldsymbol{s})= l(\boldsymbol{s}),~\forall \boldsymbol{s}\in T,
\end{equation}
with
\begin{align}
&a(\boldsymbol{d},\boldsymbol{s})=\int_{\Omega(\phi)}\boldsymbol{\sigma(d)}:\boldsymbol{\varepsilon(s)}~\mathrm{d}\boldsymbol{x},\label{eqn: elast a}\\
&l(\boldsymbol{s})=\int_{\Gamma_N}\boldsymbol{s}\cdot\boldsymbol{g}~\mathrm{d}s,\\
&k_\psi(\boldsymbol{d},\boldsymbol{s})=\int_{\Omega(\phi)}\psi\boldsymbol{d}\cdot\boldsymbol{s}~\mathrm{d}\boldsymbol{x},\label{eqn: elast k}\\
&j(\boldsymbol{d},\boldsymbol{s})=\gamma(\lambda+\mu)\sum_{F\in\mathscr{T}_G}\int_{F} h_F^3\llbracket\boldsymbol{n}_F\cdot\boldsymbol{\nabla d}\rrbracket\cdot\llbracket\boldsymbol{n}_F\cdot\boldsymbol{\nabla s}\rrbracket~\mathrm{d}s,\label{eqn: elast j}
\end{align}
where $R=\{\boldsymbol{d}\in [H^1(\Omega(\phi))]^3:\boldsymbol{d}\rvert_{\Gamma_D}=\boldsymbol{0},~\boldsymbol{d}\rvert_{\Gamma_N}=\boldsymbol{d}_0\}$ and $T=\{\boldsymbol{d}\in [H^1(\Omega(\phi))]^3:\boldsymbol{d}\rvert_{\Gamma_D}=\boldsymbol{0},~\boldsymbol{d}\rvert_{\Gamma_N}=\boldsymbol{0}\}$.
\end{weak}

In the above, $\mathscr{T}_G$ is known as the ghost skeleton and defined in \cite{10.1002/nme.4823_2015} as follows: for distinct elements $K_1\in\mathscr{S}_h$ and $K_2\in\mathscr{S}_h$, a facet $F\in\mathscr{T}_G$ is given by $F=K_1\cap K_2$ where at least one of $K_1$ or $K_2$ intersect the interface. In addition, the operation $:$ denotes double contraction, $a(\boldsymbol{d},\boldsymbol{s})$ is the standard bilinear form for elasticity, $k_\psi(\boldsymbol{d},\boldsymbol{s})$ enforces zero displacement within the isolated volume marked by $\psi$ (see Section \ref{sec: isolated vol tagging}), and $j(\boldsymbol{d},\boldsymbol{s})$ is the ghost penalty stabilisation term discussed in \cite{10.1016/j.cma.2017.09.005_2018} with stabilisation parameter $\gamma=10^{-7}$. Finally, we replace the test space $T$ and trial space $R$ with discrete spaces of continuous piecewise-linear vector fields denoted $T^h$ and $R^h$, respectively.

The optimisation problem is then
\begin{equation}\label{eqn: wheel opt prob}
    \begin{aligned}
    \min_{\phi\in W^h}&~J(\phi)\coloneqq\int_{\Omega(\phi)}\boldsymbol{\sigma(d)}:\boldsymbol{\varepsilon(d)}~\mathrm{d}\boldsymbol{x}\\
    \text{s.t. }&~C(\phi)=0,\\
    &~A(\boldsymbol{d},\boldsymbol{s})=l(\boldsymbol{s}),~\forall \boldsymbol{s}\in T^h,
    \end{aligned}
\end{equation}
where $C(\phi)\coloneqq\operatorname{Vol}(\Omega(\phi))-V_f\operatorname{Vol}(D)$ with $\operatorname{Vol}(\Omega(\phi))=\int_{\Omega(\phi)}1~\mathrm{d}\boldsymbol{x}$ and similarly for $\operatorname{Vol}(D)$, and $V_f=0.4$ is the required volume fraction.

The above is implemented using GridapTopOpt v0.2.0 \cite{GridapTopOpt}. Note that because Gridap and GridapTopOpt allow the user to specify arbitrary weak formulations and optimisation problems, other unfitted approaches, such as the finite cell method \cite{Parvizian_Duster_Rank_2012}, can also readily be used.

\subsubsection{Results}
We solve the optimisation problem in Equation \eqref{eqn: wheel opt prob} using augmented Lagrangian method \cite{978-0-387-30303-1_2006} as described by \citet{GridapTopOpt} with initial structure shown in Figure \ref{fig:wheel initial}. We use the automatic differentiation techniques discussed in Section \ref{sec: Automatic differentiation} along with C\'ea's method \cite{10.1051/m2an/1986200303711_1986} as provided by GridapTopOpt \citep{GridapTopOpt} to compute all derivatives. We generate the symplectic background triangulation $D$ using Gmsh \cite{10.1002/nme.2579} and load the resulting triangulation using GridapGmsh \cite{gridapgmsh}. The resulting mesh has roughly 1.32M elements and 219K nodes. We partition the background triangulation into 48 parts and solve the optimisation problem in an efficient memory-distributed framework using 48 processors. For a detailed review of the techniques used to improve computational efficiency we refer the reader to \citet{Badia2020}, \citet{Badia2022}, and \citet{GridapTopOpt}. To solve the Hilbertian extension-regularisation problem (Equation \ref{eqn: hilb extension wf}) we use an algebraic multigrid preconditioned conjugate gradient method as outlined in \cite{GridapTopOpt}. We use SuperLU\_DIST \cite{Demmel_Gilbert_Li_1999,Li_Demmel_2003,Li_Lin_Liu_Sao_2023} to solve the state equation for elasticity, the transport equation, and the reinitialisation equation. In the future, we will investigate iterative methods for solving these problems at larger scales. 

\begin{figure}[p]
    \centering
    \begin{subfigure}{0.32\textwidth}
        \centering
        \includegraphics[width=\textwidth]{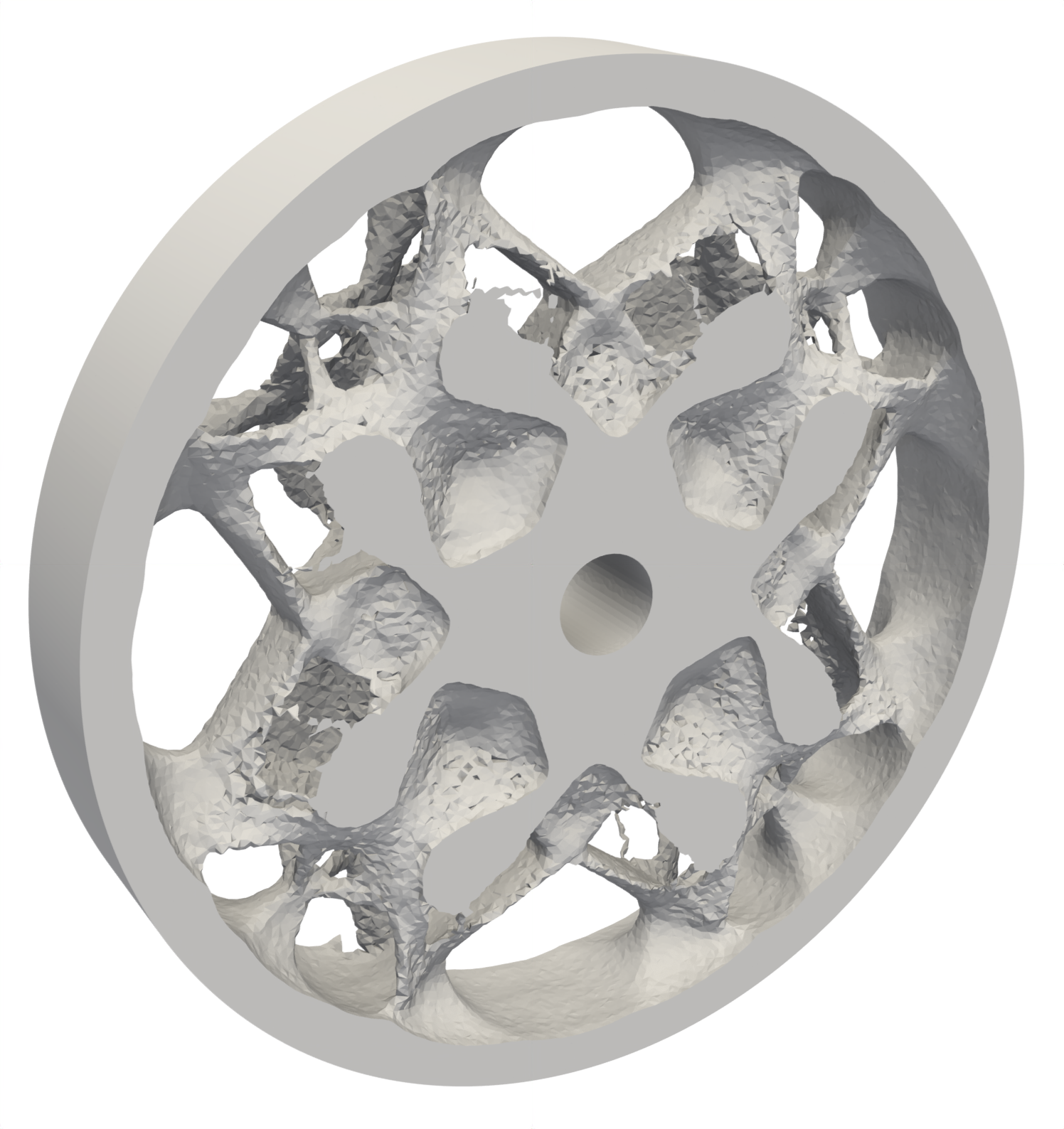}
        \caption{Iter. 50}
    \end{subfigure}
    \begin{subfigure}{0.32\textwidth}
        \centering
        \includegraphics[width=\textwidth]{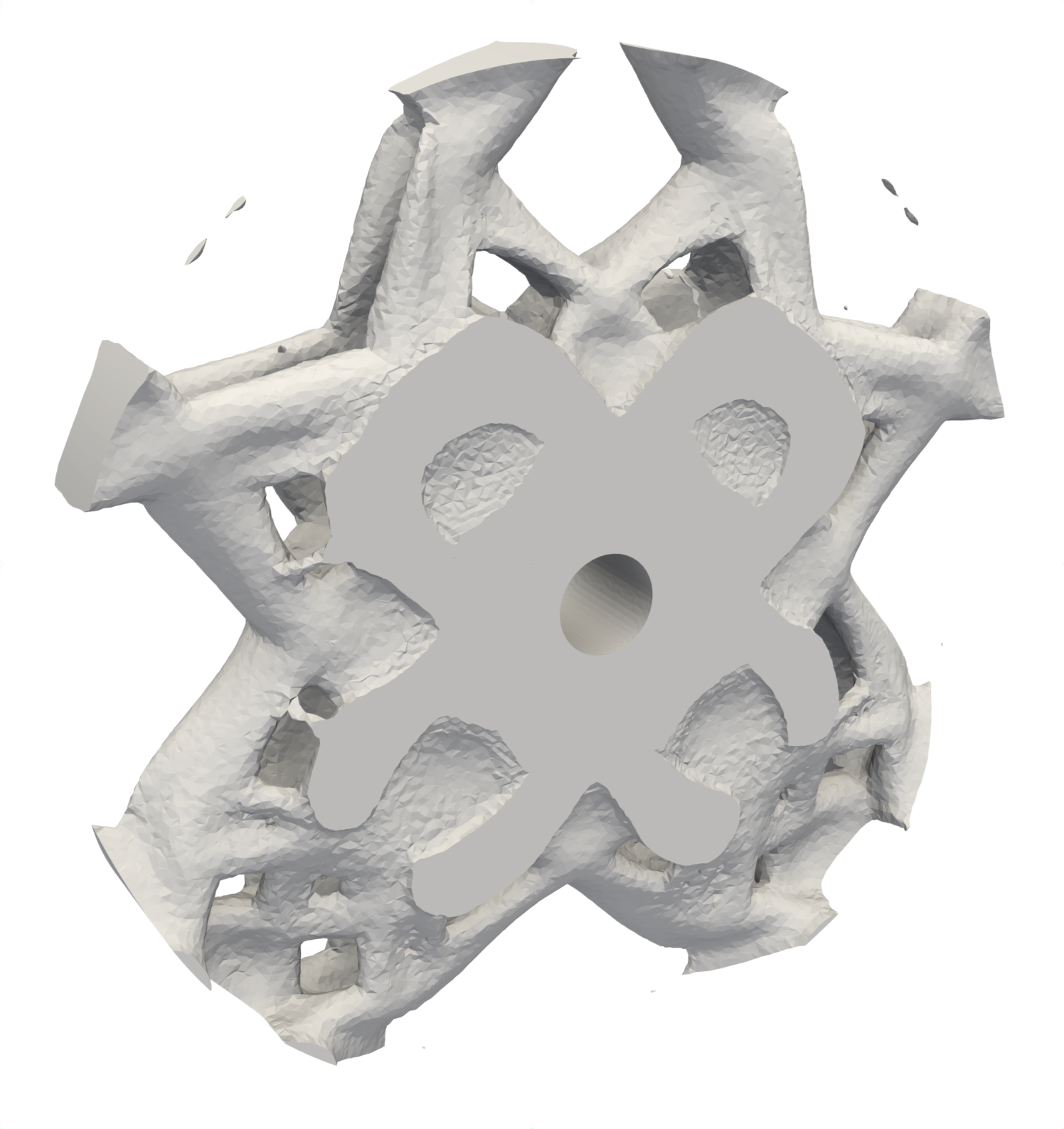}
        \caption{Iter. 100}
    \end{subfigure}\\
    \begin{subfigure}{0.32\textwidth}
        \centering
        \includegraphics[width=\textwidth]{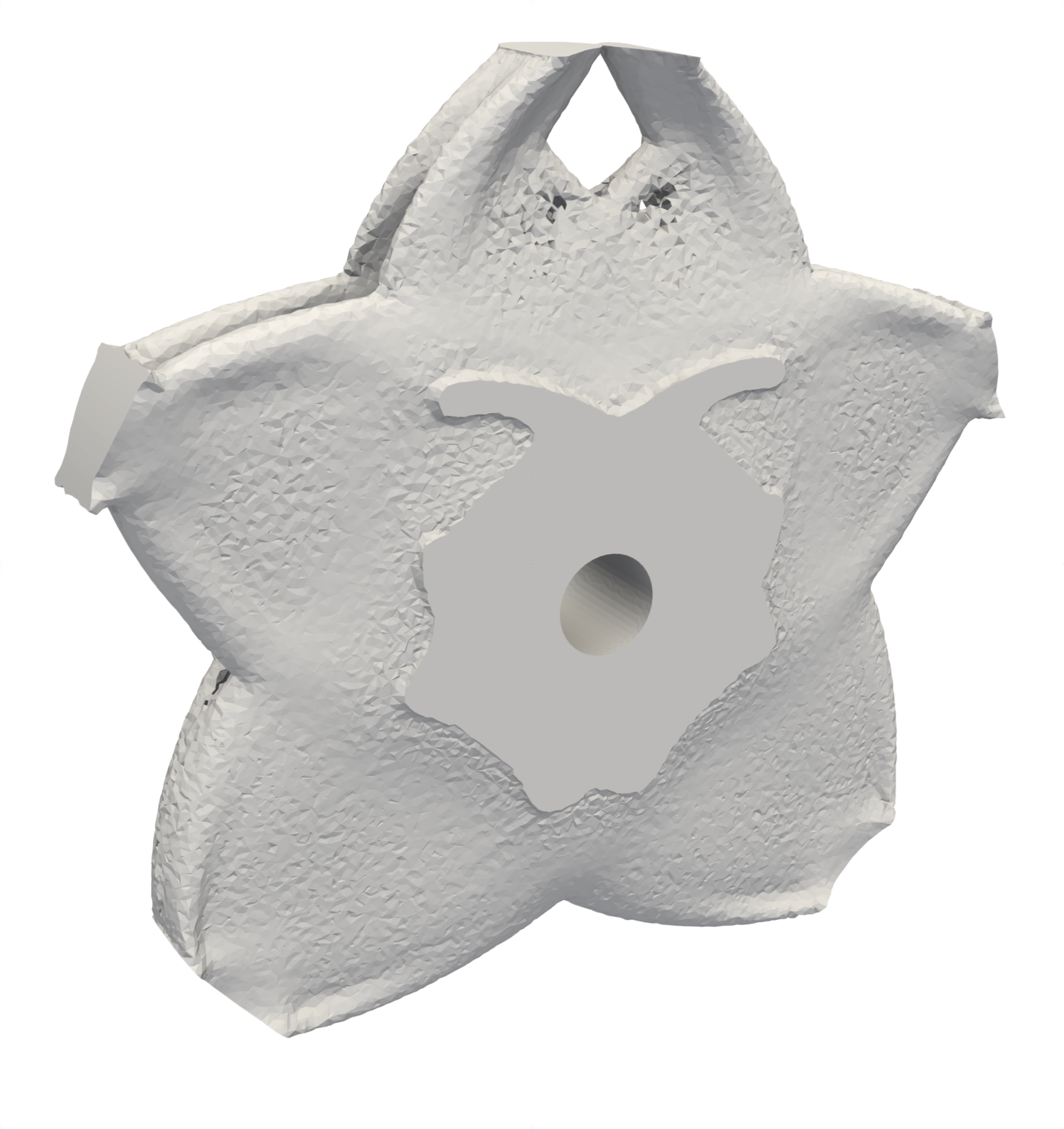}
        \caption{Iter. 150}
    \end{subfigure}
    \begin{subfigure}{0.32\textwidth}
        \centering
        \includegraphics[width=\textwidth]{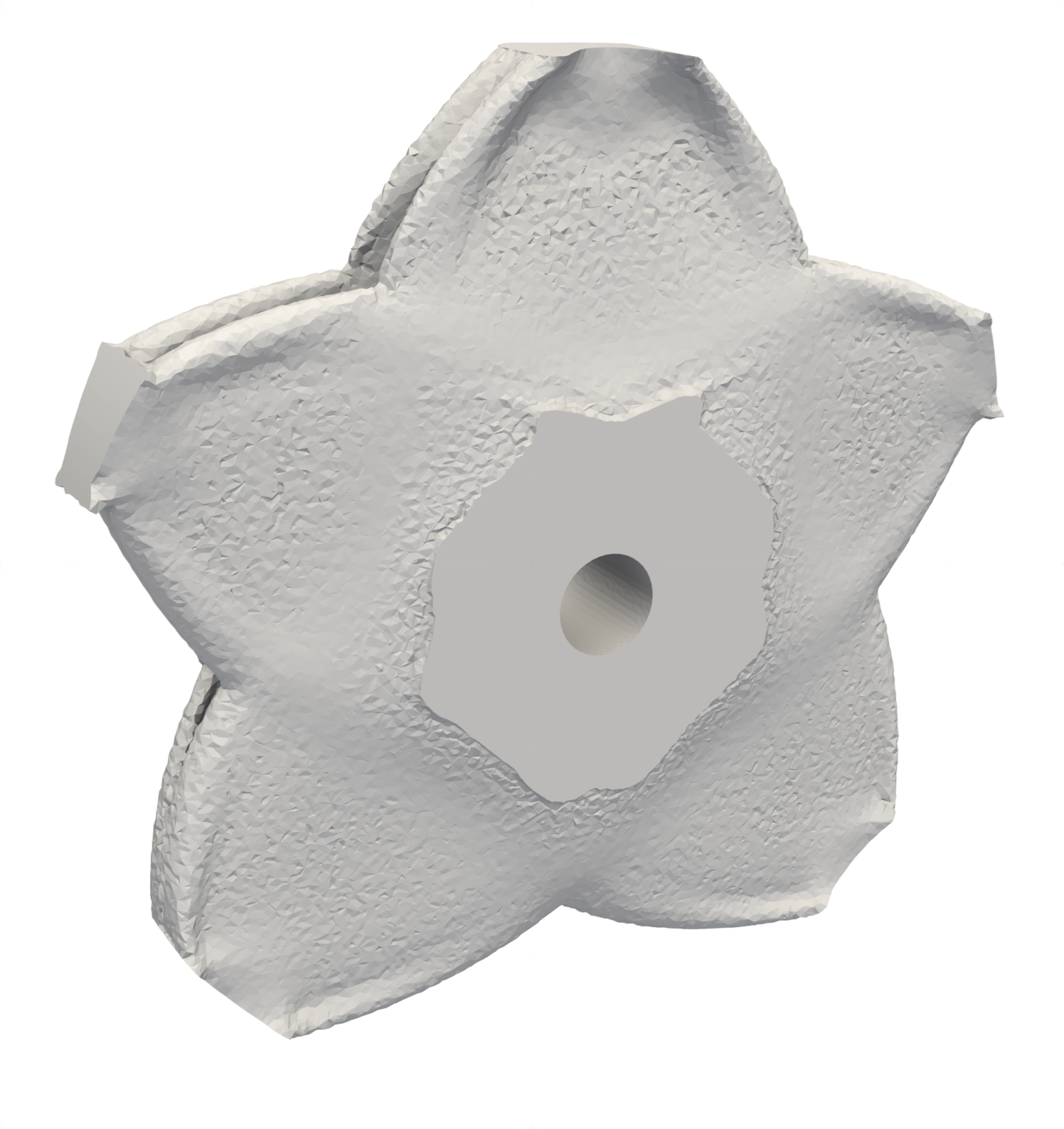}
        \caption{Iter. 240}
    \end{subfigure}
    \caption{Visualisations of the intermediate structures (Fig. a -- Fig. c) and the final optimised structure (Fig. d) for the elastic wheel problem. We outline the background domain using the black lines.}
    \label{fig: wheel results}
\end{figure}
\begin{figure}[p]
    \centering
    \begin{subfigure}{0.42\textwidth}
        \centering
        \includegraphics[width=\textwidth]{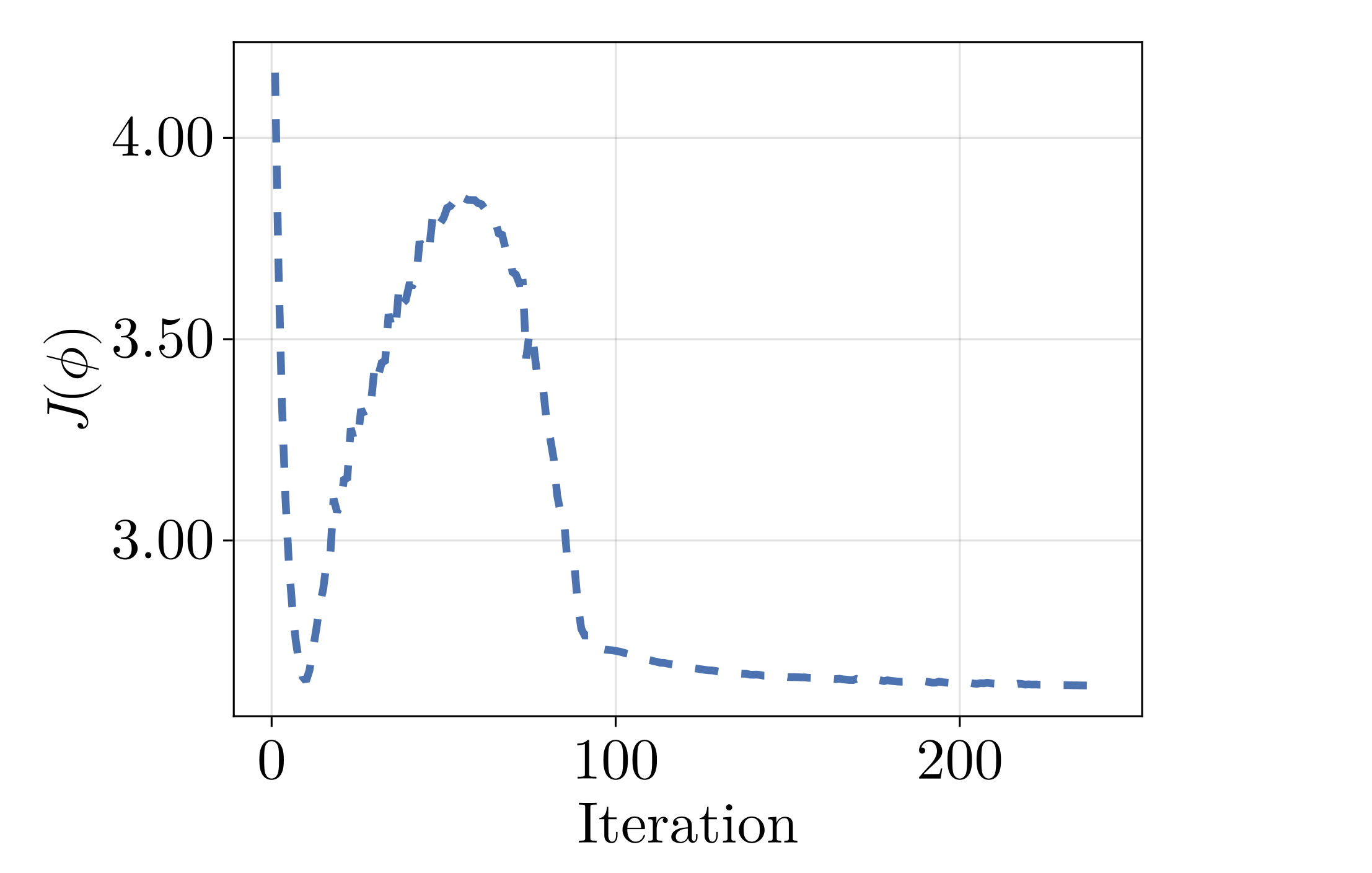}
        \caption{}
    \end{subfigure}
    \hspace{0.5cm}
    \begin{subfigure}{0.42\textwidth}
        \centering
        \includegraphics[width=\textwidth]{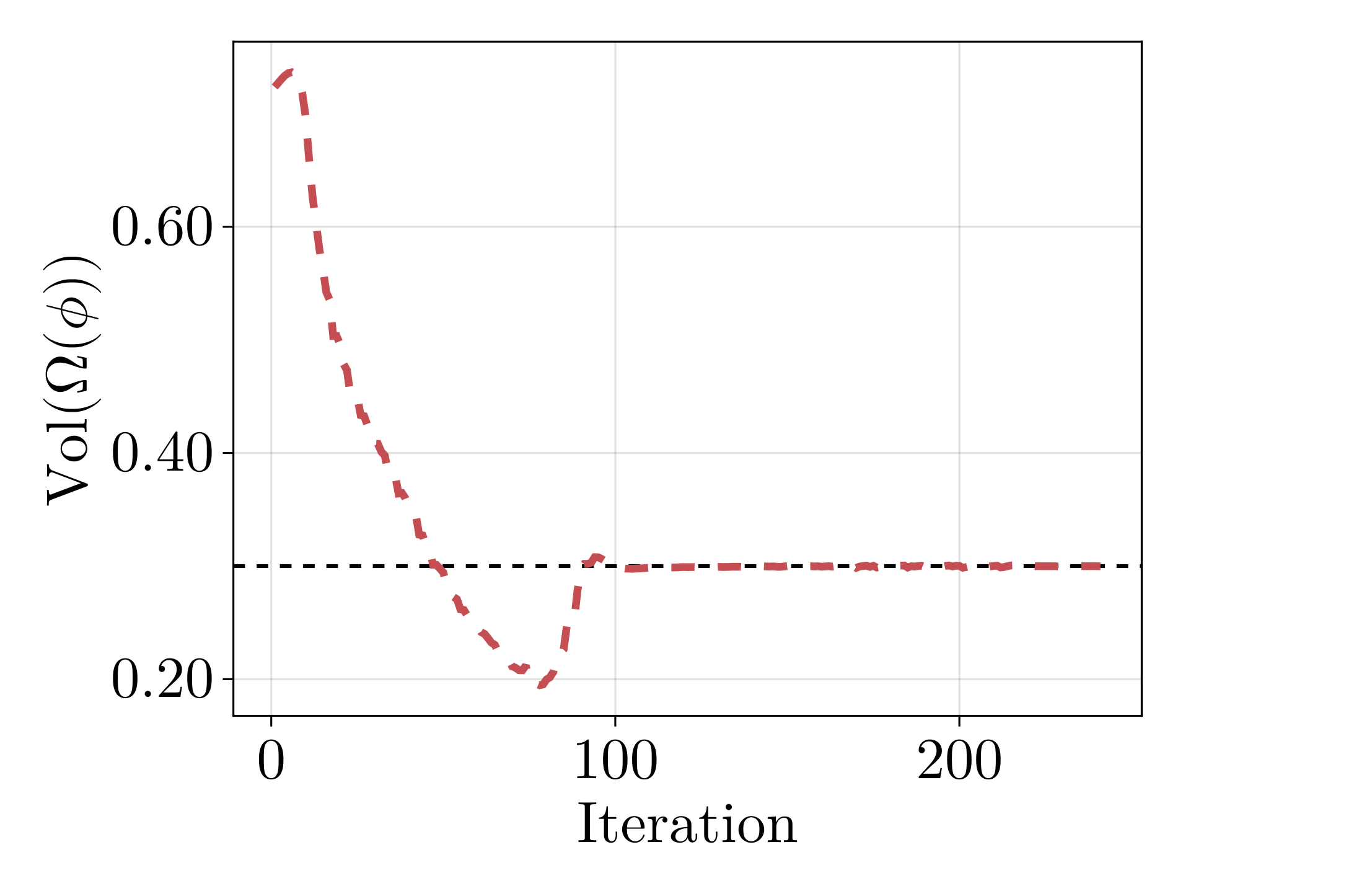}
        \caption{}
    \end{subfigure}
    \caption{Iteration history of the objective (Fig. a) and volume constraint (Fig. b) for the elastic wheel problem.}
    \label{fig: wheel iteration history}
\end{figure}

Figure \ref{fig: wheel results} shows visualisations of intermediate structures and the final structure, while Figure \ref{fig: wheel iteration history} gives the iteration history of the objective and volume constraint. The average iteration time is approximately 4 minutes. This could be significantly improved in future using scalable iterative solvers for the linear systems resulting from the finite element discretisation. In addition, the number of iterations required to reach convergence could be reduced by using a more advanced optimiser such as a projection/null-space method \citep{Wegert_2023b,10.1051/cocv/2020015_2020}.

The results demonstrate that using the unfitted approaches for level-set evolution and reinitialisation, we are able to solve topology optimisation problems on unstructured background domains using the automatic differentiation techniques discussed in Section \ref{sec: Automatic differentiation}.

\subsection{Fluid-structure interaction}

\subsubsection{Formulation}
In this section, we consider topology optimisation of a linear elastic structure in a fluid-structure interaction problem with Stokes flow. We assume that the displacement of the elastic structure is small, so that the coupling in the fluid-structure problem is one way. This allows the fluid and elastic problems to be de-coupled so that they can be solved in a sequential manner. We will refer to these as \textit{staggered} problems.

\begin{figure}
    \centering
    \def\svgwidth{0.8\textwidth}
    % \large
    \input{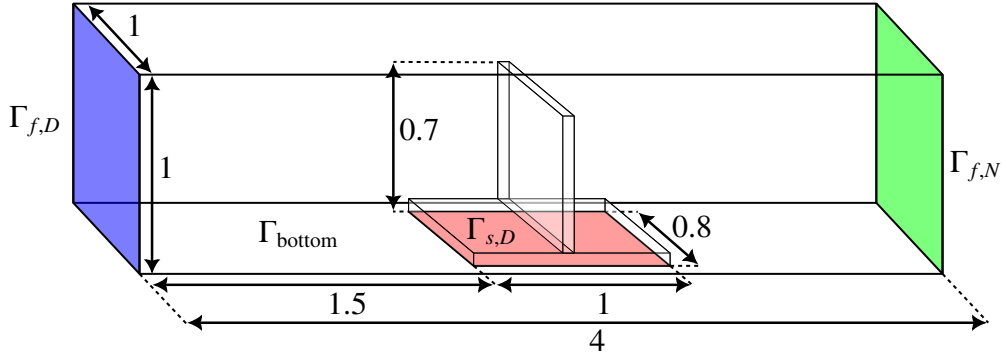}
    \caption{A visualisation of the background domain $D$ and boundary conditions for the fluid-structure interaction problem. The red boundary $\Gamma_{s,D}$ has a homogeneous Dirichlet boundary condition on the displacement, the blue boundary $\Gamma_{f,D}$ has an in-flow Dirichlet boundary condition on the flow, the green boundary $\Gamma_{f,N}$ has a zero stress condition on the flow exiting the domain, and $\Gamma_{\text{bottom}}$ has a no-slip condition. In addition, the sides of the domain $\Gamma_{\text{sides}}=\partial D\setminus(\Gamma_{\text{bottom}}\cup\Gamma_{s,D}\cup\Gamma_{f,D}\cup\Gamma_{f,N})$ have no-slip conditions on the flow. The plate at the centre of the domain is fixed and non-designable. The base of the plate and the width of the upright region both have a depth and width of 0.1, respectively.}
    \label{fig: fsi background}
\end{figure}

Figure \ref{fig: fsi background} shows the background domain and boundary conditions for this problem. The region shown in grey is a non-designable region. This setup is similar to the one in \cite{10.1016/j.jcp.2020.109574_2020}. The strong formulation of the Stokes flow is 
\begin{equation}\label{eqn: FSI strong form stokes}
    \begin{cases}
       -\boldsymbol{\nabla}\cdot\boldsymbol{\sigma}_f(\boldsymbol{u},p)=\boldsymbol{0}&\text{in }\Omega_{\text{out}}(\phi),\\
        \boldsymbol{\nabla}\cdot\boldsymbol{u}=\boldsymbol{0}&\text{in }\Omega_{\text{out}}(\phi),\\
        \boldsymbol{\sigma}_f(\boldsymbol{u},p)\cdot\boldsymbol{n}=\boldsymbol{0}&\text{on }\Gamma_{f,N},\\
        \boldsymbol{u}=\boldsymbol{u}_d&\text{on }\Gamma_{f,D},\\
        \boldsymbol{u}=\boldsymbol{0}&\text{on }\Gamma(\phi)\cup\Gamma_{\text{bottom}},\\
        \boldsymbol{u}\cdot\boldsymbol{n}={0}&\text{on }\Gamma_\text{sides},
    \end{cases}
\end{equation}
while the strong formulation governing the linear elastic structure is
\begin{equation}\label{eqn: FSI strong form elast}
    \begin{cases}
        -\boldsymbol{\nabla}\cdot\boldsymbol{\sigma}(\boldsymbol{d})=\boldsymbol{0}&\text{in }\Omega(\phi),\\
        \boldsymbol{\sigma}(\boldsymbol{d})\cdot\boldsymbol{n}=\boldsymbol{\sigma}_f(\boldsymbol{u},p)\cdot\boldsymbol{n}&\text{in }\Gamma(\phi),\\
        \boldsymbol{d}=\boldsymbol{0}&\text{on }\Gamma_{s,D},
    \end{cases}
\end{equation}
where $\Omega_\text{out}(\phi)=D\setminus\Omega(\phi)$ is the fluid domain, $\mu_f$ is the viscosity of the flow, $p$ is the pressure, $\boldsymbol{u}_d=y\boldsymbol{e}_1$ is the in-flow on $\Gamma_{f,D}$, and $\boldsymbol{\sigma}_f(\boldsymbol{u},p)=2\mu_f\boldsymbol{\varepsilon}(\boldsymbol{u})-p\boldsymbol{I}$ is the stress tensor for the fluid. The system above has one-way coupling as only the surface traction term in Equation \ref{eqn: FSI strong form elast} depends on the normal stress imparted by the fluid on $\Gamma(\phi)$. 

For the Stokes flow problem, we use the stabilised Nitsche fictitious domain method and a face ghost-penalty stabilisation as detailed in \cite{10.1002/nme.4823_2015} with continuous piecewise linear elements for the velocities and piecewise constant elements for the pressures. For the linear elastic problem, we use the same approach as in Section \ref{sec: elastic wheel example}. The weak formulations are given by: 
\begin{weak}\label{weakform: fsi fluid}
For $\Omega_{\text{out}}(\phi)\subset D$, find $(\boldsymbol{u},p)\in U\times Q$ such that
\begin{equation}
    R_1((\boldsymbol{u},p),(\boldsymbol{v},q))=a_f(\boldsymbol{u},\boldsymbol{v})+b_f(\boldsymbol{v},p)+b_f(\boldsymbol{u},q)+j_{f,u}(\boldsymbol{u},\boldsymbol{v})-j_{f,p}(p,q)+j_{f,\psi}(p,q)=0,~\forall (\boldsymbol{v},q)\in V\times Q,
\end{equation}
with
\begin{align}
    &a_{f}(\boldsymbol{u},\boldsymbol{v})=\int_{\Omega_\text{out}(\phi)}\mu_f\boldsymbol{\nabla u}:\boldsymbol{\nabla u}~\mathrm{d}\boldsymbol{x}-\int_{\Gamma(\phi)}\mu_f(\bn\cdot\boldsymbol{\nabla u})\cdot\boldsymbol{v}+\mu_f(\bn\cdot\boldsymbol{\nabla v})\cdot\boldsymbol{u}-\frac{\gamma_N}{h}\boldsymbol{u}\cdot\boldsymbol{v}~\mathrm{d}s,\\
    &b_{f}(\boldsymbol{v},p)=-\int_{\Omega_\text{out}(\phi)}p\boldsymbol{\nabla}\cdot\boldsymbol{v}~\mathrm{d}\boldsymbol{x}-\int_{\Gamma(\phi)}p\bn\cdot\boldsymbol{v}~\mathrm{d}s,\\
    &j_{f,u}(\boldsymbol{u},\boldsymbol{v})=\gamma_u\mu_f\sum_{F\in\mathscr{T}_G}\int_{F} h_F\llbracket\boldsymbol{n}_F\cdot\boldsymbol{\nabla d}\rrbracket\cdot\llbracket\boldsymbol{n}_F\cdot\boldsymbol{\nabla s}\rrbracket~\mathrm{d}s,\\
    &j_{f,p}(p,q)=\frac{\gamma_p}{\mu_f}\sum_{F\in\mathscr{T}_h}\int_{F}h_F\llbracket p\rrbracket\llbracket q\rrbracket~\mathrm{d}s,\\
    &j_{f,\psi}(p,q)=\int_{\Omega_\text{out}(\phi)}\psi_{f} pq~\mathrm{d}\boldsymbol{x},
\end{align}
where $U=\{\boldsymbol{u}\in [H^1(\Omega(\phi))]^3:\boldsymbol{u}\rvert_{\Gamma_{f,D}}=\boldsymbol{u}_d,~\boldsymbol{u}\rvert_{\Gamma(\phi)\cup\Gamma_{\text{bottom}}}=\boldsymbol{0},~(\boldsymbol{u}\cdot\boldsymbol{n})\rvert_{\Gamma_{sides}}=0\}$ and $V=\{\boldsymbol{u}\in [H^1(\Omega(\phi))]^3:\boldsymbol{u}\rvert_{\Gamma_{f,D}}=\boldsymbol{0},~\boldsymbol{u}\rvert_{\Gamma(\phi)\cup\Gamma_{\text{bottom}}}=\boldsymbol{0},~(\boldsymbol{u}\cdot\boldsymbol{n})\rvert_{\Gamma_{sides}}=0\}$ are the trial and test spaces for the velocity field, and $Q=L^2(\Omega)$ is pressure space.   
\end{weak}
\begin{weak}
    For $\Omega(\phi)\subset D$ and $(\boldsymbol{u},p)\in U\times Q$ such that $R_1((\boldsymbol{u},p),(\boldsymbol{v},q))=0,~\forall (\boldsymbol{v},q)\in V\times Q$, find $\boldsymbol{d}\in T$ such that
    \begin{equation}\label{eqn: elast weak form fsi}
        R_2((\boldsymbol{u},p),\boldsymbol{d},\boldsymbol{s})=a(\boldsymbol{d},\boldsymbol{s})+k(\boldsymbol{d},\boldsymbol{s})+j(\boldsymbol{d},\boldsymbol{s})- l_s((\boldsymbol{u},p),\boldsymbol{s})= 0,~\forall \boldsymbol{s}\in T,
    \end{equation}
    with $a(\boldsymbol{d},\boldsymbol{s})$, $k(\boldsymbol{d},\boldsymbol{s})$, and $j(\boldsymbol{d},\boldsymbol{s})$ as in Equation \eqref{eqn: elast a}, \eqref{eqn: elast k}, and \eqref{eqn: elast j}, and
    \begin{equation}
        l_s((\boldsymbol{u},p),\boldsymbol{s})=\int_{\Gamma(\phi)}(1-\psi_s)\boldsymbol{s}\cdot (\boldsymbol{\sigma}_f(\boldsymbol{u},p)\cdot\bn)~\mathrm{d}s
    \end{equation}
    where $T=\{\boldsymbol{d}\in [H^1(\Omega(\phi))]^3:\boldsymbol{d}\rvert_{\Gamma_{s,D}}=\boldsymbol{0}\}$.
\end{weak}
\noindent In Weak form \ref{weakform: fsi fluid}, the inclusion of $1-\psi_s$ in $l_s$ ensures that isolated volumes of the solid phase have zero displacement, $\gamma_N=100$ is the Nitsche parameter, $j_{f,u}(\boldsymbol{u},\boldsymbol{v})$ is the ghost penalty term with stabilisation parameter $\gamma_u=0.1$, $j_{f,p}(p,q)$ is the symmetric pressure stabilisation for piecewise constant pressures with stabilisation parameter $\gamma_p=0.25$, and $j_{f,\psi}(p,q)$ enforces an average pressure in isolated volumes of $\Omega_{\text{out}}(\phi)$ marked by $\psi_f$.

The optimisation problem is then given by
\begin{equation}\label{eqn: fsi opt prob}
    \begin{aligned}
    \min_{\phi\in W^h}&~J(\phi)\coloneqq\int_{\Omega(\phi)}\boldsymbol{\sigma(d)}:\boldsymbol{\varepsilon(d)}~\mathrm{d}\boldsymbol{x}+\int_{\Omega(\phi)}\gamma_{s}\psi_s~\mathrm{d}{x}\\
    \text{s.t. }&~C(\phi)=0,\\
    &~R_1((\boldsymbol{u},p),(\boldsymbol{v},q))=0,~\forall (\boldsymbol{v},q)\in V\times Q,\\
    &~R_2((\boldsymbol{u},p),\boldsymbol{d},\boldsymbol{s})=0,~\forall \boldsymbol{s}\in T,
    \end{aligned}
\end{equation}
where $C(\phi)$ is the volume constraint as previously with $V_f=0.06$. It should be noted that isolated volumes in the solid phase obstruct fluid flow due to the no-slip boundary conditions at the interface. As a result, these can artificially improve the objective. We therefore augment the minimum compliance objective with a term that penalises isolated volumes in the solid phase with a penalisation constant of $c=1000$. This ensures that isolated volumes in the solid phase are removed. Note that when no isolated volumes are present, the objective reduces to minimum compliance. 

To compute derivatives of $J$ with respect to $\phi$, we develop an adjoint method for staggered problems based on C\'ea's method \cite{10.1051/m2an/1986200303711_1986} and implement this in GridapTopOpt \citep{GridapTopOpt}. This is given in \ref{sec: Adjoint method for staggered problems} for general staggered problems of size $k$. Our particular example corresponds to the case $k=2$. Incorporating additional physics (e.g., thermostatics) increases the size of the staggered system.

\subsubsection{Results}
To solve the optimisation problem in Equation \eqref{eqn: fsi opt prob} we again use the augmented Lagrangian method. In addition, we generate an unstructured symplectic background triangulation with refinement in the region around the non-designable region to improve accuracy of the solutions near the interface of the elastic structure. The non-refined region has a maximum element size of 0.05 while the refined region has a maximum element size of 0.015. In addition, we use symmetry to halve the computational domain. As a result, the background triangulation has roughly 824K elements and 129K nodes. We partition the background triangulation into 48 parts and solve the optimisation problem in a memory-distributed framework using 48 processors. We use SuperLU\_DIST \cite{Demmel_Gilbert_Li_1999,Li_Demmel_2003,Li_Lin_Liu_Sao_2023} to solve all linear systems in parallel.

\begin{figure}[!t]
    \centering
    \begin{subfigure}{0.32\textwidth}
        \centering
        \includegraphics[width=\textwidth]{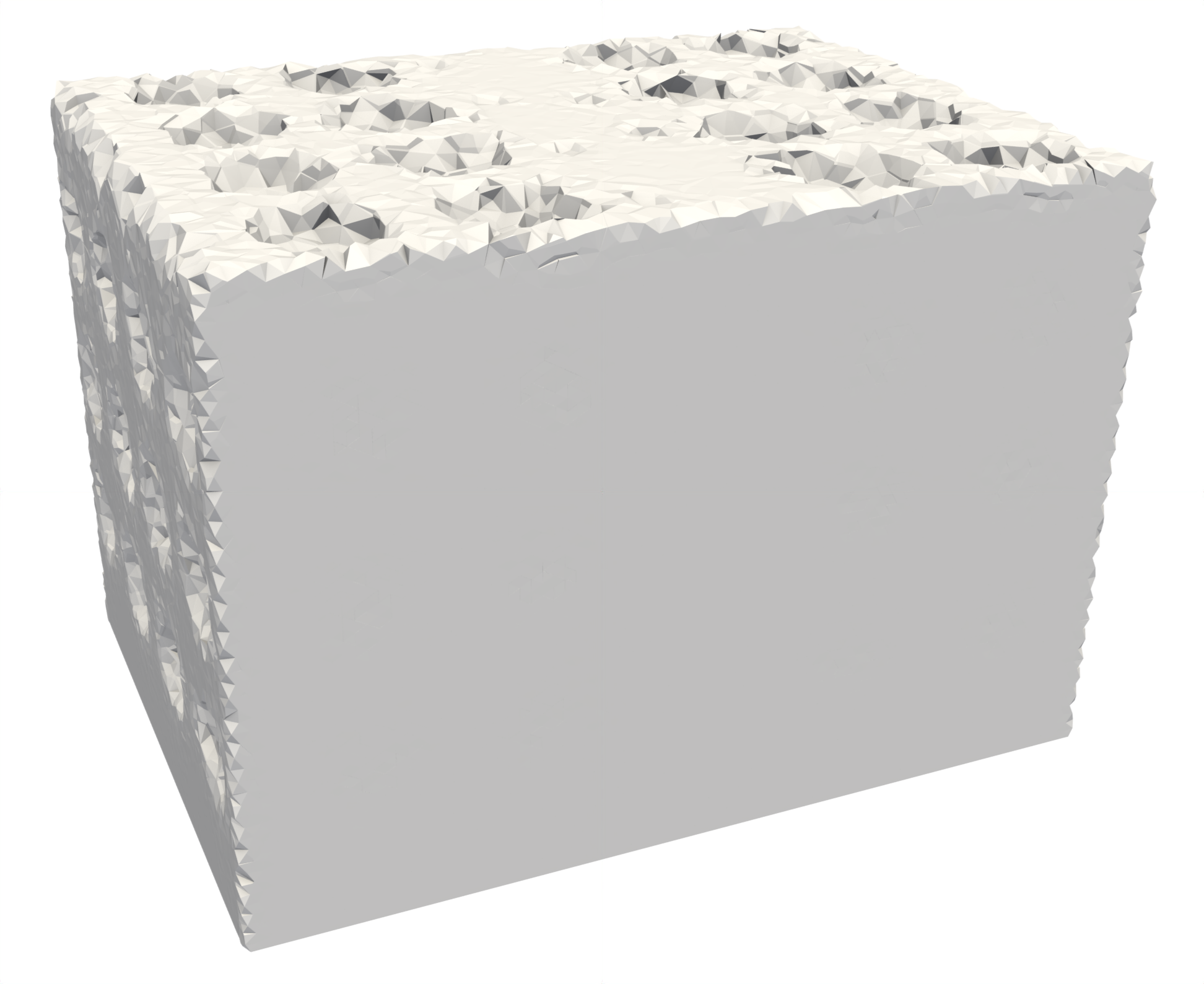}
        \caption{It. 0}
    \end{subfigure}
    \begin{subfigure}{0.32\textwidth}
        \centering
        \includegraphics[width=\textwidth]{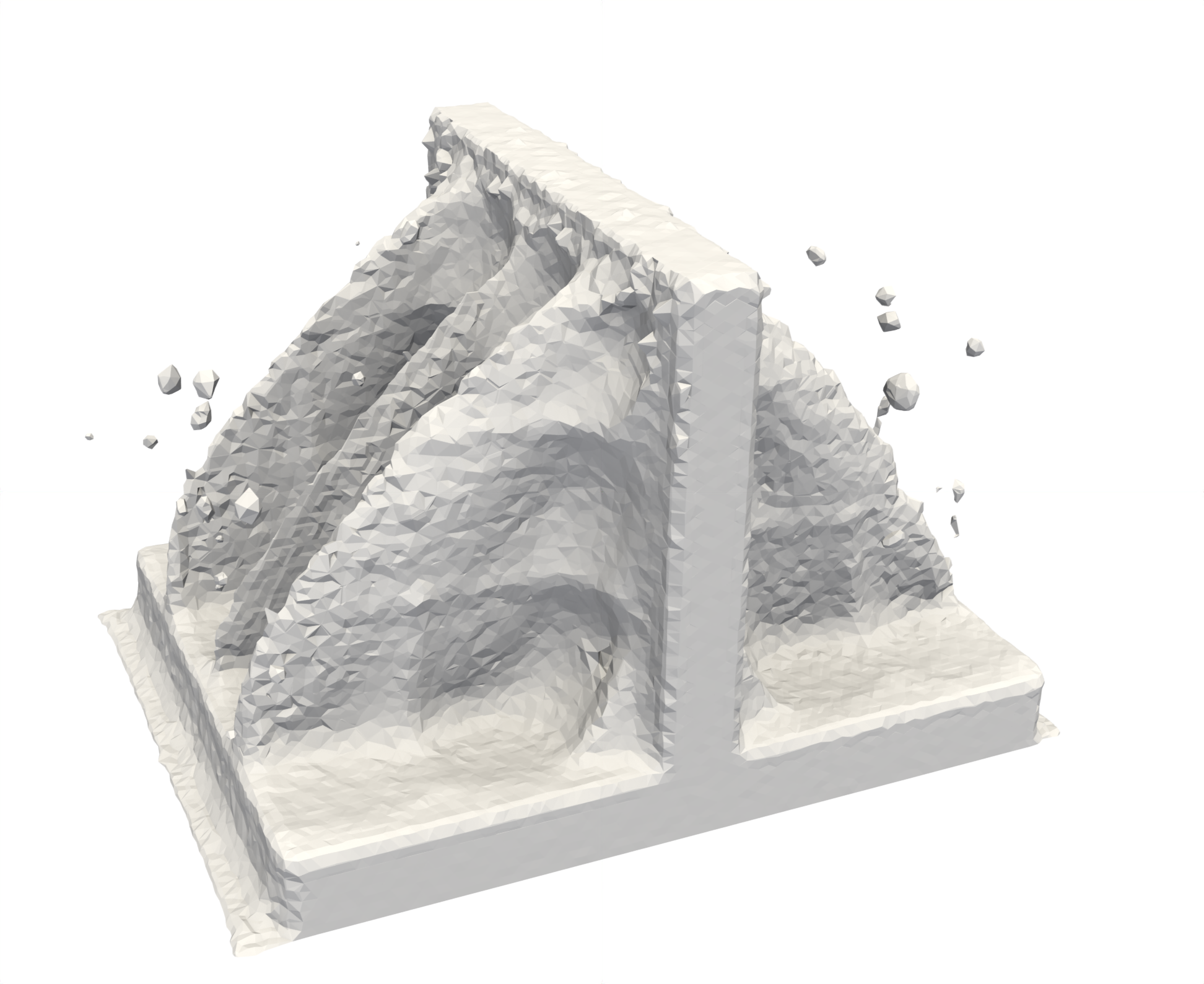}
        \caption{It. 50}
    \end{subfigure}
    \begin{subfigure}{0.32\textwidth}
        \centering
        \includegraphics[width=\textwidth]{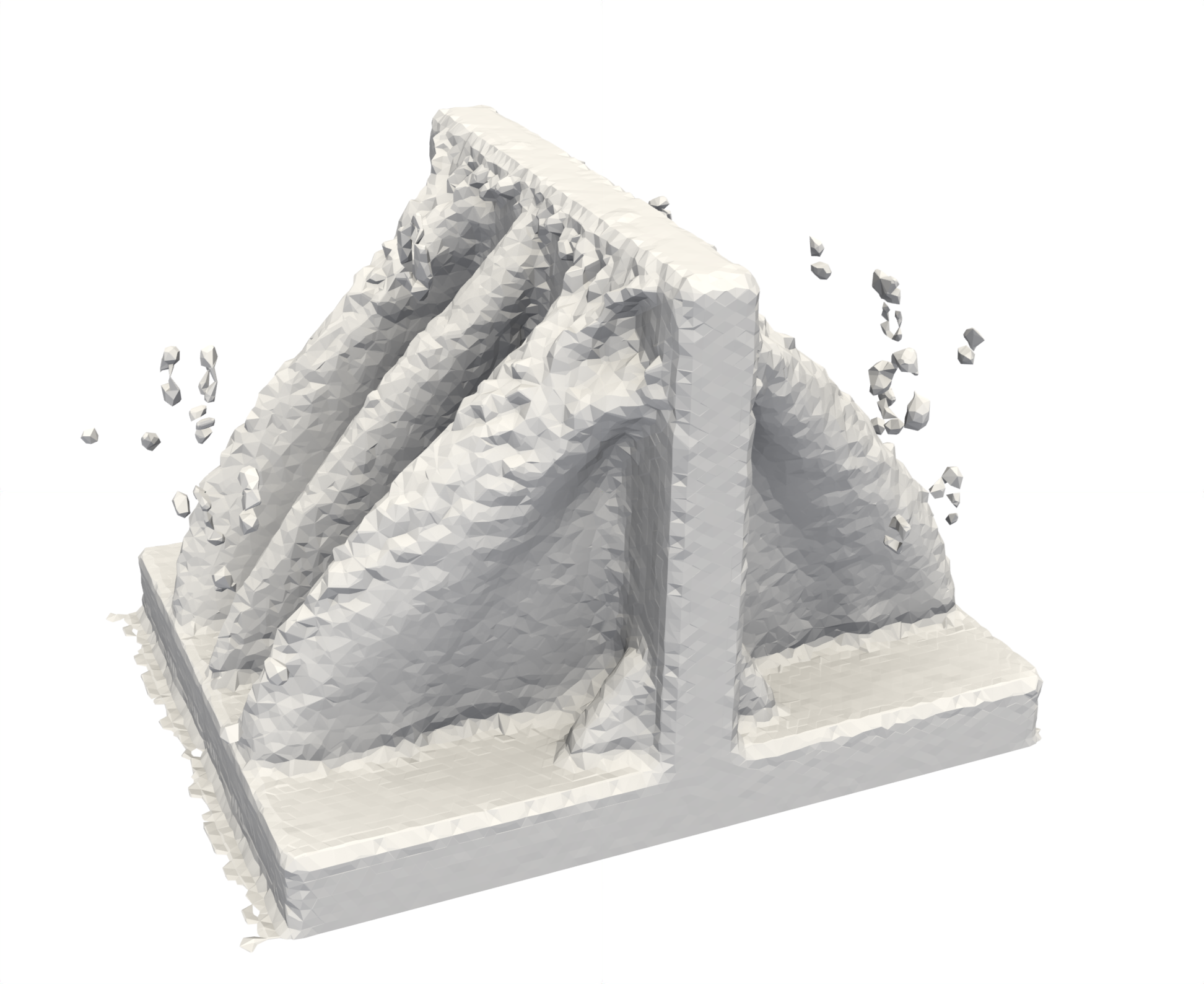}
        \caption{It. 100}
    \end{subfigure}
    \begin{subfigure}{0.32\textwidth}
        \centering
        \includegraphics[width=\textwidth]{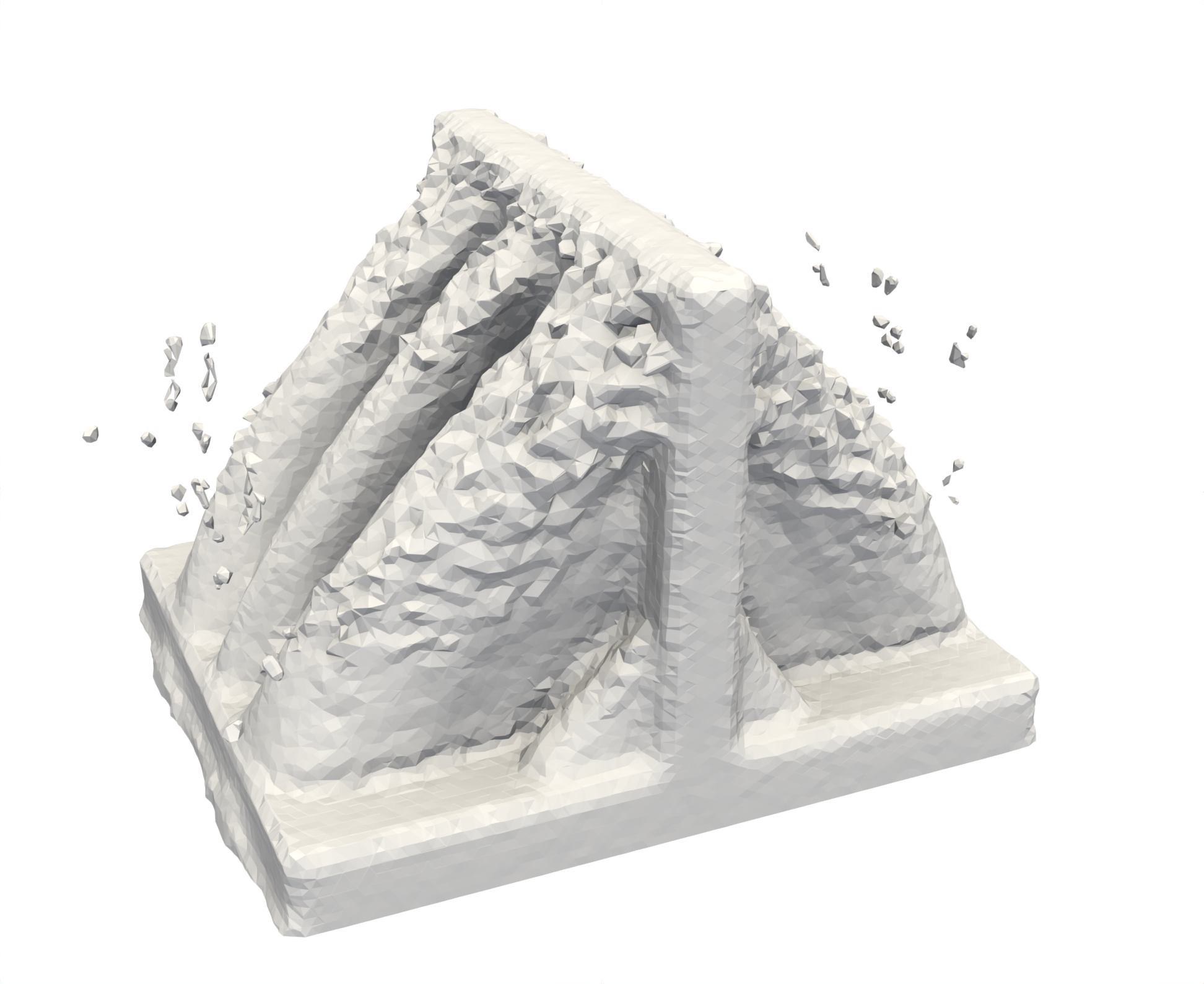}
        \caption{It. 200}
    \end{subfigure}
    \begin{subfigure}{0.32\textwidth}
        \centering
        \includegraphics[width=\textwidth]{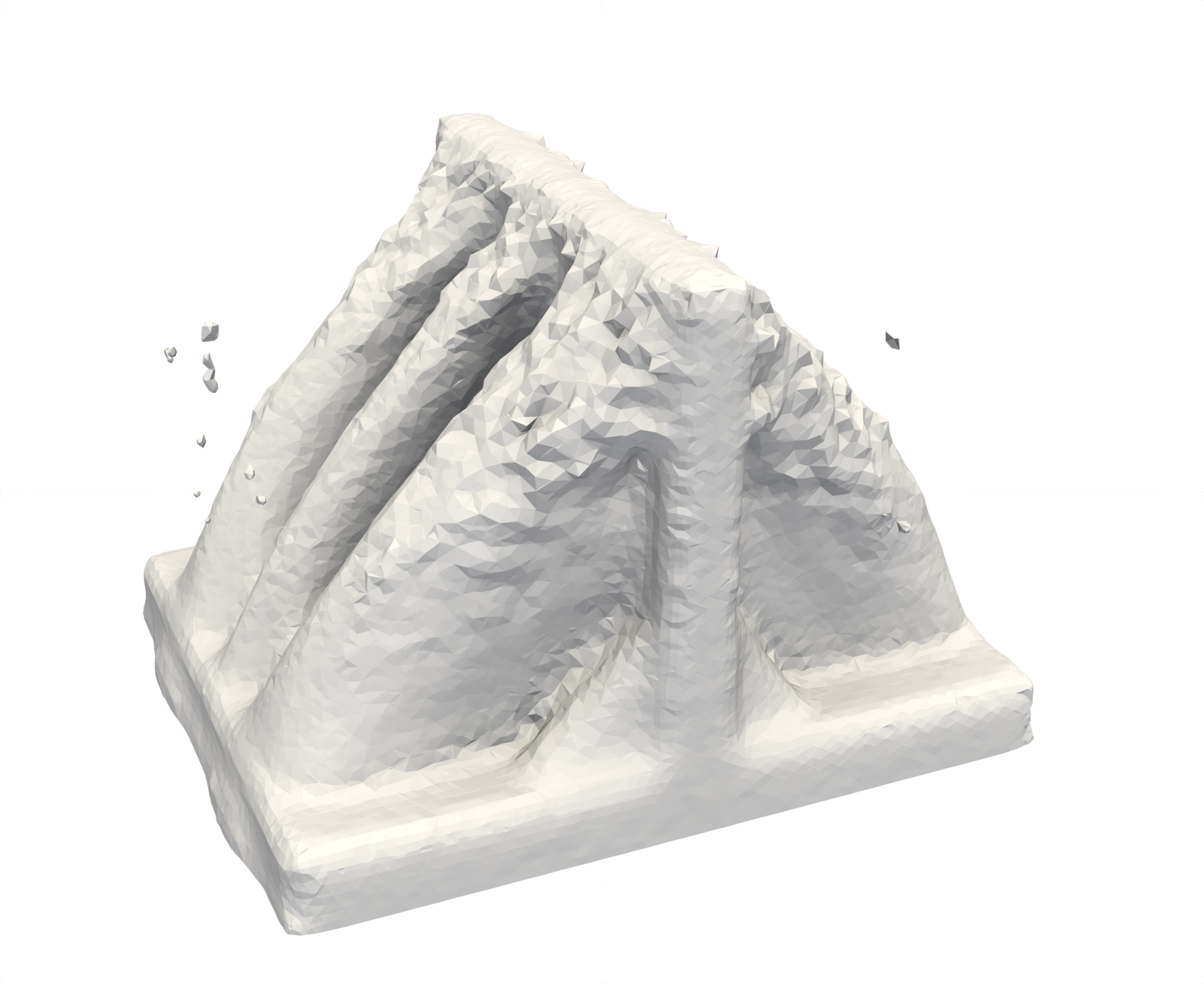}
        \caption{It. 400}
    \end{subfigure}
        \begin{subfigure}{0.32\textwidth}
        \centering
        \includegraphics[width=\textwidth]{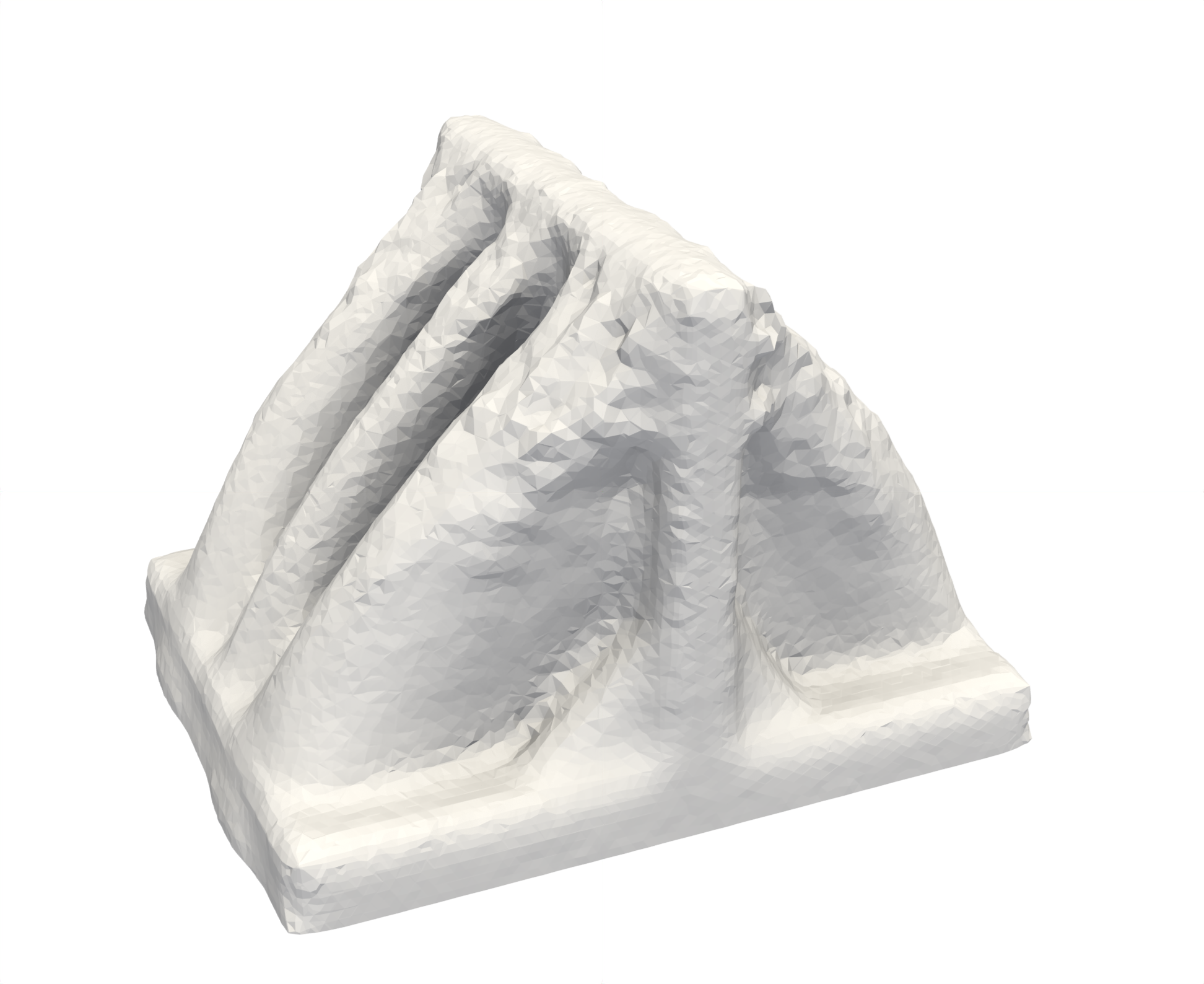}
        \caption{It. 519}
    \end{subfigure}
        \begin{subfigure}{1\textwidth}
        \centering
        \includegraphics[width=\textwidth]{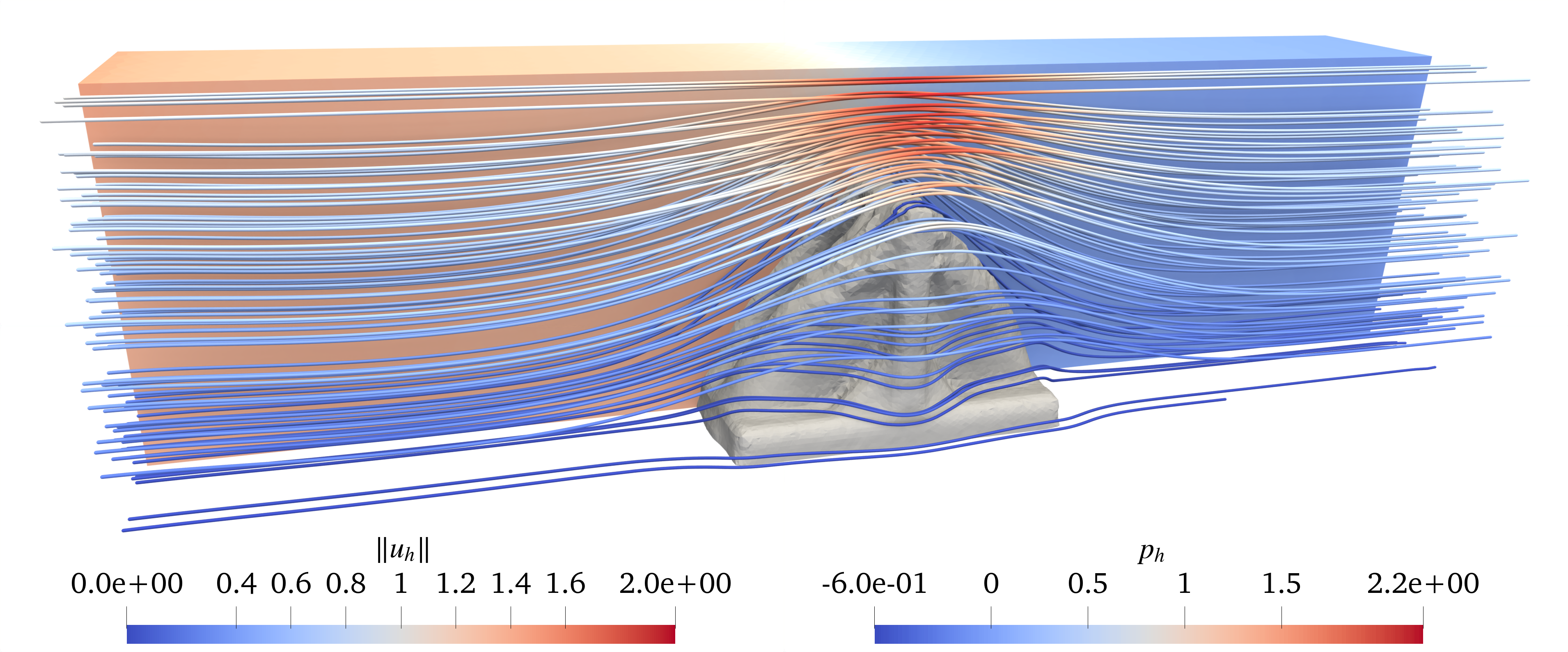}
        \caption{}
    \end{subfigure}
    \caption{Visualisations of the initial structure (Fig. a), intermediate structures (Fig. b -- Fig. e) and the final optimised structure (Fig. f) for the fluid-structure interaction problem. Figure (g) shows a visualisation of the velocity field and pressure field in the fluid.}
    \label{fig: FSI results}
\end{figure}
\begin{figure}[!t]
    \centering
    \begin{subfigure}{0.45\textwidth}
        \centering
        \includegraphics[width=\textwidth]{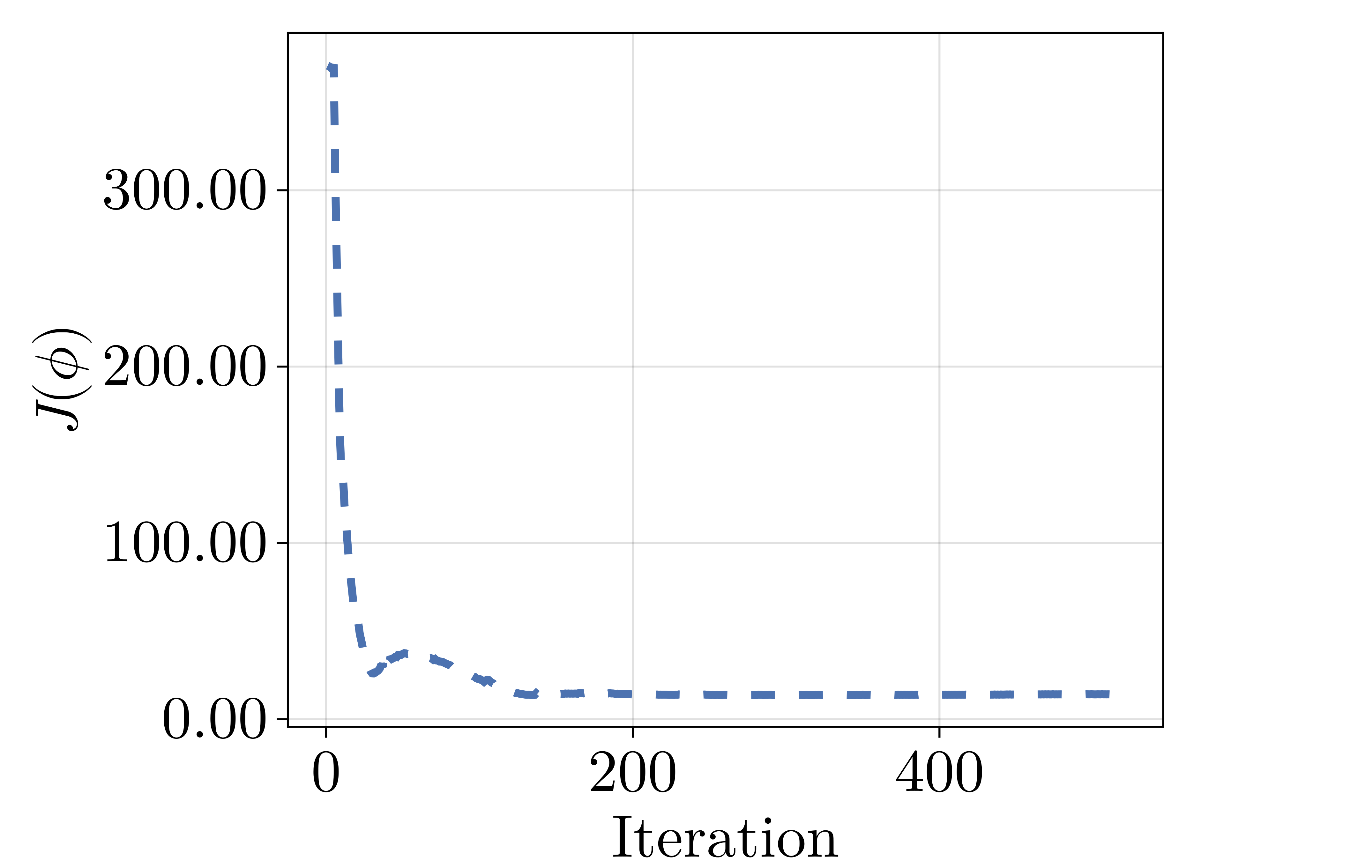}
        \caption{}
    \end{subfigure}
    \begin{subfigure}{0.45\textwidth}
        \centering
        \includegraphics[width=\textwidth]{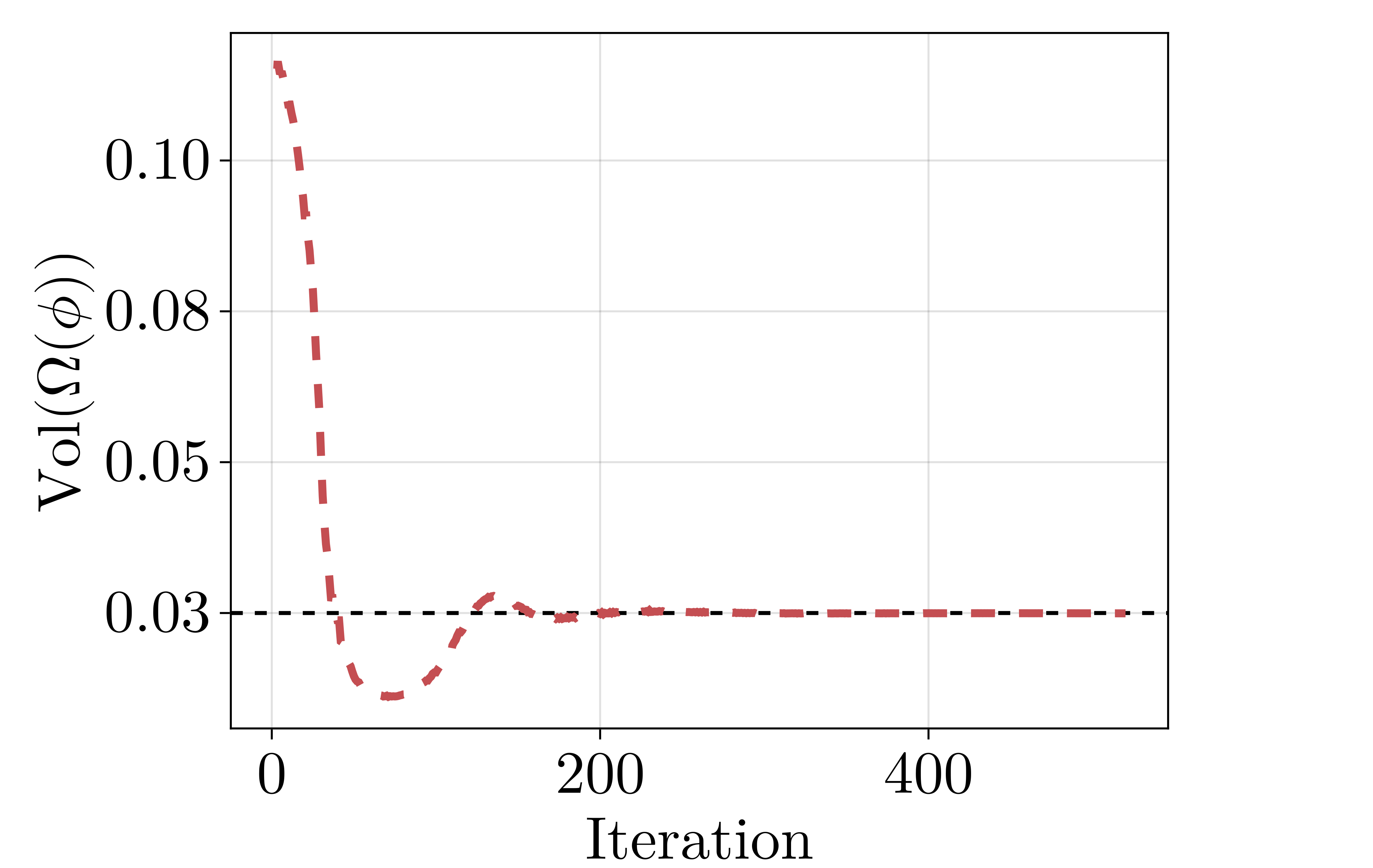}
        \caption{}
    \end{subfigure}
    \caption{Iteration history of the objective (Fig. a) and volume constraint (Fig. b) for the fluid-structure interaction problem.}
    \label{fig: FSI iteration history}
\end{figure}

Figure \ref{fig: FSI results} show visualisations of the initial structure, intermediate structures, and the final structure, while Figure \ref{fig: FSI iteration history} gives the iteration history of the objective and volume constraint. The average iteration time is approximately 8.5 minutes. As with the previous example, this could be significantly improved in future by using scalable iterative solvers and the number of iterations required to reach convergence could be reduced by using a more advanced optimiser.

These results demonstrate that automatic shape differentiation for unfitted discretisations can be used to solve more complicated multi-physics problems, thereby avoiding the difficult and error-prone calculation of shape derivatives. In addition, the unfitted finite element allows us to accurately represent the boundary condition for the flow field.

\section{Conclusions}\label{sec: Conclusions}
In this paper, we extended the shape calculus results developed in \cite{Berggren_2023} to the case where the domain boundary intersects the background domain boundary and linked these theoretical results to automatic shape differentiation for unfitted discretisations. We implemented analytic directional shape derivatives and unfitted automatic shape differentiation in the Julia package GridapTopOpt \cite{GridapTopOpt} and showed that the analytic directional shape derivatives given by \citet{Berggren_2023} and the extensions considered in this work can be recovered to machine precision regardless of mesh size. We proposed that unfitted automatic shape differentiation can be used to avoid computing intricate mesh-related quantities that appear in directional derivatives of surface integrals. Furthermore, automatic shape differentiation can compute derivatives of expressions for which there is not yet a rigorous mathematical counterpart in the framework of \citet{Berggren_2023}, such as shape Hessians. We showed that our implementation readily captures the shape Hessian and again verified the result with finite differences.

Our implementation in GridapTopOpt and the wider Gridap ecosystem is general and can be executed in both serial and distributed computing frameworks to solve arbitrary PDE-constrained optimisation problems using unfitted finite elements. We propose a novel graph-based approach to detect isolated volumes using a conforming connectivity graph for the cut mesh and a graph-colouring algorithm. These extensions allow GridapTopOpt to solve an even wider range of problems on unstructured background triangulations. Furthermore, by utilising unfitted methods, we avoid issues that arise from smoothing the material properties across the interface, thus providing higher accuracy solutions. To demonstrate the applicability of the unfitted automatic shape differentiation framework and our implementation, we considered two example optimisation problems: minimum compliance optimisation of a three-dimensional linear elastic wheel; and topology optimisation of a three-dimensional linear elastic structure in a fluid-structure interaction problem with Stokes flow. We solved these problems using CutFEM and distributed the computation over a computing cluster. We avoided computation of all analytic directional shape derivatives by leveraging the automatic shape differentiation techniques developed in this work.

In the future, we plan to extend the framework to multi-material problems involving several level-set functions and improve the scalability of algorithms by leveraging matrix-free methods.

\section*{CRediT authorship contribution statement}
\textbf{Zachary J Wegert:} Writing -- original draft, Writing -- review and editing, Conceptualization, Data curation, Formal analysis, Investigation, Methodology, Software, Validation, Visualization. \textbf{Jordi Manyer:} Writing -- original draft, Writing -- review \& editing, Software, Methodology. \textbf{Connor Mallon:} Writing -- review \& editing, Supervision. \textbf{Santiago Badia:} Supervision, Writing -- review \& editing, Resources. \textbf{Vivien J Challis:} Supervision, Writing -- review \& editing, Conceptualization, Project administration, Funding acquisition, Resources.

\section*{Declaration of Competing Interest}
The authors have no competing interests to declare that are relevant to the content of this article.

\section*{Data availability}
The source code and data for this work is available at \url{https://github.com/zjwegert/GridapTopOpt.jl}.

\section*{Acknowledgement}
The authors would like to thank Prof. Martin Berggren for his insightful and encouraging comments. This work was supported by the Australian Research Council through the Discovery Projects grant scheme (DP220102759). This research used computational resources provided by: the eResearch Office, Queensland University of Technology; the Queensland Cyber Infrastructure Foundation (QCIF); and the National Computational Infrastructure (NCI) Australia. The first author is supported by a QUT Postgraduate Research Award and a Supervisor Top-Up Scholarship. The above support is gratefully acknowledged.

\appendix

\section{Adjoint method for staggered problems}
\label{sec: Adjoint method for staggered problems}
In this appendix, we consider the directional derivative of a functional $J(u_1,\dots,u_k,\phi)$ at $\phi$ in the direction $\psi$ where $(u_1,\dots,u_k)\in U=U_1\times\dots\times U_k$ depend on $\phi\in\Phi$ through the residuals
\begin{equation}\label{appendix eqn: residuals}
    \begin{aligned}
    R_1(u_1,v_1,\phi)=0,&\quad\forall v_1\in V_1,\\
    R_2(u_1,u_2,v_2,\phi)=0,&\quad\forall v_2\in V_2,\\
    \vdots\quad~&\\
    R_k(u_1,\dots,u_k,v_k,\phi)=0,&\quad\forall v_k\in V_k,
\end{aligned}
\end{equation}
where $u_1,\dots,u_k$ have the following dependence
\begin{equation}\label{appendix eqn: ui dependence}
    \begin{aligned}
        &u_1=u_1(\phi),\\
        &u_2=u_2(u_1,\phi),\\
        &\quad~~\vdots\\
        &u_k=u_k(u_1,\dots,u_k,\phi),\\
    \end{aligned}
\end{equation}
and $R_i$ is linear in the test function $v_i$.

We assume that the above are sufficiently differentiable and proceed with C\'ea's formal method \cite{10.1051/m2an/1986200303711_1986}. For brevity of the construction, we drop the notational dependence of $J$ and $R_i$ on $u_1,\dots,u_k$ and $\phi$, excepting the definition of the Lagrangian below. In particular, we use the following shorthand for directional derivatives: the directional derivative of $J$ at $u_i$ in the direction $q_i$ is
\begin{equation}
    \partial_{u_i}J(q_i)\coloneqq\partial_{u_i}J(u_1,\dots,u_k,\phi)(q_i)\coloneqq\left.\D{}{t}\right\rvert_{t=0}J(u_1,\dots,u_i+tq_i,\dots,u_k,\phi),
\end{equation}
and similarly for other functionals and fields. 

Suppose that we define the Lagrangian $\mathcal{L}:U\times V\times\Phi$ to be
\begin{equation}\label{appendix eqn: lagrangian}
    \mathcal{L}(p_1,\dots,p_k,\lambda_1,\dots,\lambda_k,\phi)=J(p_1,\dots,p_k,\phi)-\sum_{i=1}^{k}R_i(p_1,\dots,p_i,\lambda_i,\phi).
\end{equation}
where $V=V_1\times\dots\times V_k$ and $(\lambda_1,\dots,\lambda_k)\in V$ are called the adjoint variables.

Clearly, at the solutions $u_1,\dots,u_k$ of the residuals in Equation \eqref{appendix eqn: residuals} we recover $J$ via
\begin{equation}\label{appendix eqn: recovery}
        \mathcal{L}\lvert_{(p_1,\dots,p_k)=(u_1,\dots,u_k)}=J(u_1,\dots,u_k,\phi).
\end{equation}

A derivative of $\mathcal{L}$ at $\lambda_i$ in the direction $q_i$ gives
\begin{equation}\label{appendix eqn: residual recovery}
    \partial_{\lambda_i}\mathcal{L}(q_i)=-R_i(p_1,\dots,p_i,q_i,\phi)
\end{equation}
by linearity in the test function. Requiring stationarity of Equation \eqref{appendix eqn: residual recovery} recovers the ith equation in \eqref{appendix eqn: residuals} for $(p_1,\dots,p_k)=(u_1,\dots,u_k)$.

Next, a derivative of $\mathcal{L}$ at $p_i$ in the direction $w_i$ gives
\begin{equation}\label{appendix eqn: deriv in lambda}
    \partial_{p_i}\mathcal{L}(w_i)=\partial_{p_i}J(w_i)-\sum_{j=i}^k\partial_{p_i}R_{j}(w_i)
\end{equation}
Requiring stationarity of Equation \eqref{appendix eqn: deriv in lambda} recovers an equation for the ith adjoint problem. Note that the ith equation in Equation \eqref{appendix eqn: deriv in lambda} depends on $\lambda_{i+1},\dots,\lambda_k$. This defines a staggered system of linear adjoint problems of the form: For $(p_1,\dots,p_k)=(u_1,\dots,u_k)$, find $(\lambda_1,\dots,\lambda_k)\in V$ such that
\begin{equation}\label{appendix eqn: adjoint}
    \begin{aligned}
        &\partial_{u_k}R_k(u_1,\dots,u_k,\lambda_k,\phi)(w_k) = \partial_{u_k}J(u_1,\dots,u_k,\phi)(w_k),\quad w_k\in U_k,\\
        &\partial_{u_{k-1}}R_{k-1}(u_1,\dots,u_{k-1},\lambda_{k-1},\phi)(w_{k-1}) = \partial_{u_{k-1}}J(u_1,\dots,u_{k-1},u_{k},\phi)(w_{k-1})\\&\hspace{5.5cm}-\partial_{u_{k-1}}R_{k}(u_1,\dots,u_{k},\lambda_{k},\phi)(w_{k-1}),\quad w_{k-1}\in U_{k-1},\\
        &\hspace{1.5cm}\vdots\\
        &\partial_{u_i}R_i(u_1,\dots,u_i,\lambda_i,\phi)(w_i) = \partial_{u_i}J(u_1,\dots,u_i,\dots,u_k,\phi)(w_i)-\sum_{j=i+1}^k\partial_{u_i}R_j(u_1,\dots,u_j,\lambda_j,\phi)(w_i),\quad w_i\in U_i,\\
        &\hspace{1.5cm}\vdots\\
        &\partial_{u_1}R_1(u_1,\lambda_1,\phi)(w_1) = \partial_{u_1}J(u_1,\dots,u_k,\phi)(w_1)-\sum_{j=2}^k\partial_{u_1}R_j(u_1,\dots,u_j,\lambda_j,\phi)(w_1),\quad w_1\in U_1.
    \end{aligned}
\end{equation}
For clarity, we explicitly show the dependence on the fields $u_i$ and $\lambda_i$. Note that the test and trial functions are swapped, so the transpose is assembled at discretisation. Each $\lambda_i$ can be found by first solving the problem for $\lambda_k$ then $\lambda_{k-1}$ through to $\lambda_1$.

Finally, at the solution $(p_1,\dots,p_k)=(u_1,\dots,u_k)$ that satisfies the residuals in Equation \eqref{appendix eqn: residuals}, and $(\lambda_1,\dots,\lambda_k)$ that that satisfies the staggered linear system in Equation \eqref{appendix eqn: adjoint}, a derivative of $\mathcal{L}$ at $\phi$ in the direction $\psi$ gives
\begin{equation}
    \mathrm{d}\mathcal{L}(\phi;\psi)=\partial_\phi J(\psi)-\sum_{i=1}^k\partial_\phi R_i(\psi) + \sum_{i=1}^k\partial_{\lambda_i} \mathcal{L}(\partial_\phi\lambda_i(\psi)) + \sum_{i=1}^k\partial_{u_i} \mathcal{L}(\partial_\phi u_i(\psi))
\end{equation}
where we have used the chainrule. The last two expressions vanish due to the stationarity of Equations \eqref{appendix eqn: residual recovery} and \eqref{appendix eqn: adjoint}. As we recover the $J$ from $\mathcal{L}$ in Equation \eqref{appendix eqn: recovery}, we have
\begin{equation}
    \mathrm{d}J(\phi;\psi)=\partial_\phi J(u_1,\dots,u_k,\phi)(\psi)-\sum_{i=1}^k\partial_\phi R_i(u_1,\dots,u_i,\lambda_i,\phi)(\psi),
\end{equation}
where we have written out the explicit dependence on fields $u_i$ and $\lambda_i$.

\bibliographystyle{elsarticle-num-names} 
\bibliography{main}

%% else use the following coding to input the bibitems directly in the
%% TeX file.

% \begin{thebibliography}{00}

% %% \bibitem[Author(year)]{label}
% %% Text of bibliographic item

% \bibitem[ ()]{}

% \end{thebibliography}
\end{document}